\providecommand{\U}[1]{\protect\rule{.1in}{.1in}}
\newtheorem{theorem}{Theorem}[section]
\newtheorem{algorithm}{Algorithm}[section]
\newtheorem{definition}{Definition}[section]
\newtheorem{example}{Example}[section]
\newtheorem{lemma}{Lemma}[section]
\newtheorem{remark}{Remark}[section]
\newenvironment{proof}[1][Proof]{\noindent\textbf{#1.} }{\ \rule{0.5em}{0.5em}}
\journalname{}
\begin{document}
\LRCornerWallPaper{0.225}{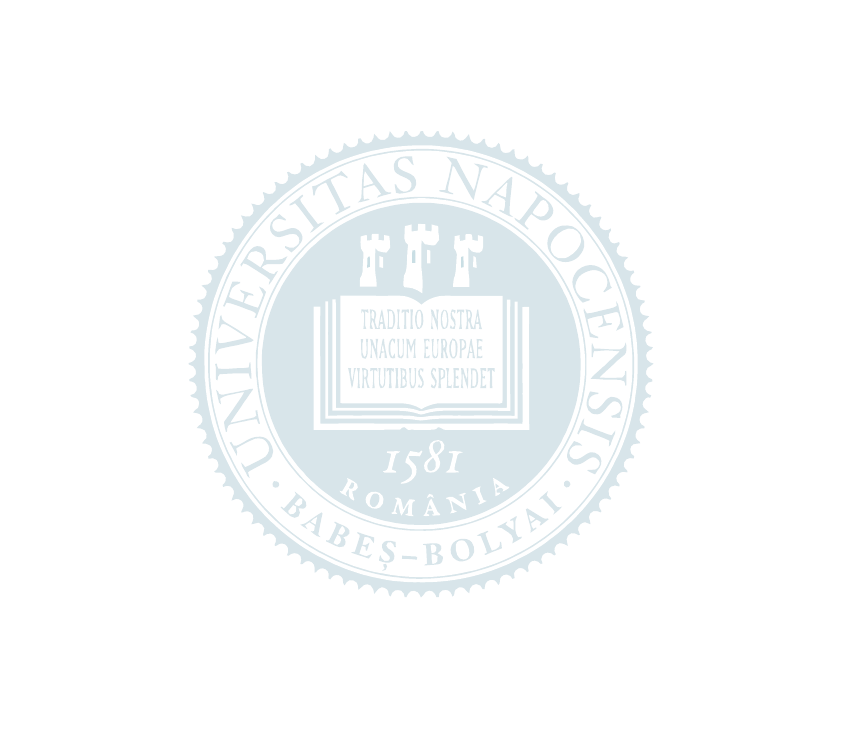}

\setcounter{tocdepth}{3}

\titlerunning{Control point based exact description}
\title{Control point based exact description of higher dimensional\\trigonometric and hyperbolic curves
and multivariate surfaces}

\author{\'{A}goston R\'{o}th}

\institute{
\'A. R\'oth \at Department of Mathematics and Computer Science, Babe\c{s}--Bolyai University, RO--400084 Cluj-Napoca, Romania \\
Tel.: +40-264-405300\\
Fax:  +40-264-591906\\
\email{agoston\_roth@yahoo.com}
}

\date{Finished on January 28, 2014 $\cdot$ Submitted to arXiv on \today}

\maketitle

\begin{abstract}
Using the normalized B-bases of vector spaces of trigonometric and hyperbolic polynomials of finite order, we specify control point configurations for the exact description of higher dimensional (rational) curves and (hybrid) multivariate surfaces determined by coordinate functions that are exclusively given either by traditional trigonometric or hyperbolic polynomials in each of their variables. The usefulness and applicability of theoretical results and proposed algorithms are illustrated by many examples that also comprise the control point based exact description of several famous curves (like epi- and hypocycloids, foliums, torus knots, Bernoulli's lemniscate, hyperbolas), surfaces (such as pure trigonometric or hybrid surfaces of revolution like tori and hyperboloids, respectively) and 3-dimensional volumes. The core of the proposed modeling methods relies on basis transformation matrices with entries that can be efficiently obtained by order elevation. Providing subdivision formulae for curves described by convex combinations of these normalized B-basis functions and control points, we also ensure the possible incorporation of all proposed techniques into today's CAD systems.

\keywords{Trigonometric and hyperbolic polynomials \and Curves and multivariate surfaces \and Basis transformation \and Order elevation \and Subdivision}
\subclass{65D17 \and 68U07}
\end{abstract}

\section{Introduction}

Normalized B-bases (a comprehensive study of which can be found in
\cite{Pena1999} and references therein) are normalized totally positive bases
that imply optimal shape preserving properties for the representation of
curves described as linear combinations of control points and basis functions.
Similarly to the classical Bernstein polynomials
\[
\mathcal{B}_{n}=\left\{  \dbinom{n}{i}u^{i}\left(  1-u\right)  ^{n-i}%
:u\in\left[  0,1\right]  \right\}  _{i=0}^{n}%
\]
of degree $n\in%
%TCIMACRO{\U{2115} }%
%BeginExpansion
\mathbb{N}
%EndExpansion
$ -- that in fact form the normalized B-basis of the vector space of
polynomials
\[
\mathbb{P}_{n}=\left\{  1,u,\ldots,u^{n}:u\in\left[  0,1\right]  \right\}
\]
of degree at most $n$ on the compact interval $\left[  0,1\right]  $, cf.
\cite{Carnicer1993} -- normalized B-bases provide shape preserving properties
like closure for the affine transformations of the control polygon, convex
hull, variation diminishing (which also implies convexity preserving of plane
control polygons), endpoint interpolation, monotonicity preserving, hodograph
and length diminishing, and a recursive corner cutting algorithm (also called
B-algorithm) that is the analogue of the de Casteljau algorithm of B\'{e}zier
curves. Among all normalized totally positive bases of a given vector space of
functions a normalized B-basis is the least variation diminishing and the
shape of the generated curve more mimics its control polygon. Important curve
design algorithms like evaluation, subdivision, degree elevation or knot
insertion are in fact corner cutting algorithms that can be treated in a
unified way by means of B-algorithms induced by B-bases.

These advantageous properties make normalized B-bases ideal blending function system
candidates for curve (and surface) modeling. Using B-basis functions, our
objective is to provide control point based exact description for higher order
derivatives of trigonometric and hyperbolic curves specified with coordinate
functions given in traditional parametric form, i.e., in vector spaces%
\begin{equation}
\mathbb{T}_{2n}^{\alpha}=\operatorname{span}\mathcal{T}_{2n}^{\alpha
}=\operatorname{span}\left\{  \cos\left(  ku\right)  ,\sin\left(  ku\right)
:u\in\left[  0,\alpha\right]  \right\}  _{k=0}^{n}
\label{truncated_Fourier_vector_space}%
\end{equation}
or%
\begin{equation}
\mathbb{H}_{2n}^{\alpha}=\operatorname{span}\mathcal{H}_{2n}^{\alpha
}=\operatorname{span}\left\{  \cosh\left(  ku\right)  ,\sinh\left(  ku\right)
:u\in\left[  0,\alpha\right]  \right\}  _{k=0}^{n},
\label{hyperbolic_vector_space}%
\end{equation}
where $\alpha$ is a fixed strictly positive shape (or design)
parameter which is either strictly less than $\pi$, or it is unbounded from above in
the trigonometric and hyperbolic cases, respectively. The obtained results will also be
extended for the control point based exact description of the rational
counterpart of these curves and of higher dimensional multivariate (rational)
surfaces that are also specified by coordinate functions given in traditional
trigonometric or hyperbolic form along of each of their variables.

\begin{remark}
From the point of view of control point based exact description of smooth
(rational) trigonometric closed curves and surfaces (i.e., when $\alpha=2\pi
$), articles \cite{RothJuhasz2010}\ and \cite{JuhaszRoth2010} already provided
control point configurations by using the so-called cyclic basis functions
introduced in \cite{RothEtAl2009}. Although this special cyclic basis of
$\mathbb{T}_{2n}^{2\pi}$ fulfills some important properties (like positivity,
normalization, cyclic variation diminishing, cyclic symmetry, singularity free
parametrization, efficient explicit formula for arbitrary order elevation), it
is not totally positive, hence it is not a B-basis, since, as it was shown in
\cite{Pena1997}, the vector space (\ref{truncated_Fourier_vector_space}) has
no normalized totally positive bases when $\alpha\geq\pi$. Therefore, by using
the B-basis of (\ref{truncated_Fourier_vector_space}), the control point based
exact description of arcs, patches or volume entities of higher dimensional
(rational) trigonometric curves and multivariate surfaces given in traditional
parametric form remained, at least for us, an interesting and challenging question.
\end{remark}

The rest of the paper is organized as follows. Section
\ref{sec:special_parametrizations} briefly recalls some basic properties of
rational B\'{e}zier curves and points out that curves described as linear
combinations of control points and B-basis functions of vector spaces
(\ref{truncated_Fourier_vector_space}) or (\ref{hyperbolic_vector_space}) are
in fact special reparametrizations of specific classes of rational B\'{e}zier
curves. This section also defines control point based (rational) trigonometric
and hyperbolic curves of finite order, briefly reviews some of their
(geometric) properties like order elevation and asymptotic behavior and at the
same time also describes their subdivision algorithm which, to the best of our
knowledge, were either totally not detailed or not described with full
generality for these type of curves in the literature. Based on multivariate
tensor products of trigonometric and hyperbolic curves, Section
\ref{sec:multivariate_surfaces} defines higher dimensional multivariate
(rational) trigonometric and hyperbolic surfaces. Section
\ref{sec:basis_transformations} provides efficient and parallely implementable
recursive formulae for those base changes that transform the normalized B-bases of vector
spaces (\ref{truncated_Fourier_vector_space}) and
(\ref{hyperbolic_vector_space}) to their corresponding canonical (traditional)
bases, respectively. Using these transformations, theorems and algorithms of
Section \ref{sec:exact_description} provide control point configurations for
the exact description of large classes of higher dimensional (rational)
trigonometric or hyperbolic curves and multivariate (hybrid) surfaces. All
examples included in this section emphasize the applicability and usefulness
of the proposed curve and surface modeling tools. Finally, Section
\ref{sec:final_remarks} closes the paper with our final remarks.

\section{Special parametrizations of a class of rational B\'{e}zier
curves\label{sec:special_parametrizations}}

Using Bernstein polynomials, a rational B\'{e}zier curve of even degree $2n$
can be described as%
\begin{equation}
\mathbf{r}_{2n}\left(  v\right)  =\frac{%
%TCIMACRO{\dsum \limits_{i=0}^{2n}}%
%BeginExpansion
{\displaystyle\sum\limits_{i=0}^{2n}}
%EndExpansion
w_{i}\mathbf{d}_{i}B_{i}^{2n}\left(  v\right)  }{%
%TCIMACRO{\dsum \limits_{j=0}^{2n}}%
%BeginExpansion
{\displaystyle\sum\limits_{j=0}^{2n}}
%EndExpansion
w_{j}B_{j}^{2n}\left(  v\right)  },~v\in\left[  0,1\right]
,\label{rational_Bezier_curve}%
\end{equation}
where $\left[  \mathbf{d}_{i}\right]  _{i=0}^{2n}\in\mathcal{M}_{1,2n+1}%
\left(
%TCIMACRO{\U{211d} }%
%BeginExpansion
\mathbb{R}
%EndExpansion
^{\delta}\right)  $ is a user defined control polygon ($\delta\geq2$), while
$\left[  w_{i}\right]  _{i=0}^{2n}\in\mathcal{M}_{1,2n+1}\left(
%TCIMACRO{\U{211d} }%
%BeginExpansion
\mathbb{R}
%EndExpansion
_{+}\right)  $ is also a user specified non-negative weight vector of rank $1$
(i.e., $\sum_{i=0}^{2n}w_{i}\neq0$).

For any fixed ratio $v\in\left[  0,1\right]  $, the recursive relations%
\begin{equation}
\left\{
\begin{array}
[c]{l}%
w_{i}^{r}\left(  v\right)  =\left(  1-v\right)  w_{i}^{r-1}\left(  v\right)
+vw_{i+1}^{r-1}\left(  v\right)  ,\\
\\
\mathbf{d}_{i}^{r}\left(  v\right)  =\left(  1-v\right)  \dfrac{w_{i}%
^{r-1}\left(  v\right)  }{w_{i}^{r}\left(  v\right)  }\mathbf{d}_{i}%
^{r-1}\left(  v\right)  +v\dfrac{w_{i+1}^{r-1}\left(  v\right)  }{w_{i}%
^{r}\left(  v\right)  }\mathbf{d}_{i+1}^{r-1}\left(  v\right)  ,~r=1,2,\ldots
,2n,~i=0,1,\ldots,2n-r
\end{array}
\right.  \label{rational_de_Casteljau}%
\end{equation}
with initial conditions%
\[
w_{i}^{0}\left(  v\right)  \equiv w_{i},~\mathbf{d}_{i}^{0}\left(  v\right)
\equiv\mathbf{d}_{i},~i=0,1,\ldots,2n
\]
define the B-algorithm (or rational de Casteljau algorithm) of the curve
(\ref{rational_Bezier_curve}) (cf. \cite{Farin2002}).

We will produce control point exact based description of trigonometric and
hyperbolic curves, therefore we need proper bases for vector spaces
(\ref{truncated_Fourier_vector_space}) and (\ref{hyperbolic_vector_space}) of
trigonometric and hyperbolic polynomials of order at most $n$ (or of degree at
most $2n$), respectively. In what follows, $\overline{\mathcal{T}}%
_{2n}^{\alpha}$ and $\overline{\mathcal{H}}_{2n}^{\alpha}$ denote the normalized B-bases
of vector spaces $\mathbb{T}_{2n}^{\alpha}$ and $\mathbb{H}_{2n}^{\alpha}$, respectively.

\subsection{Trigonometric curves and their rational
counterpart\label{sec:trigonometric_curves}}

Let $\alpha\in\left(  0,\pi\right)  $ be an arbitrarily fixed parameter and
consider the linearly reparametrized version of the B-basis
\begin{equation}
\overline{\mathcal{T}}_{2n}^{\alpha}=\left\{  T_{2n,i}^{\alpha}\left(
u\right)  :u\in\left[  0,\alpha\right]  \right\}  _{i=0}^{2n}=\left\{
t_{2n,i}^{\alpha}\sin^{2n-i}\left(  \frac{\alpha-u}{2}\right)  \sin^{i}\left(
\frac{u}{2}\right)  :u\in\left[  0,\alpha\right]  \right\}  _{i=0}^{2n}
\label{Sanchez_basis}%
\end{equation}
of order $n$ (degree $2n$) specified in \cite{Sanchez1998}, where the
non-negative normalizing coefficients%
\[
t_{2n,i}^{\alpha}=\frac{1}{\sin^{2n}\left(  \frac{\alpha}{2}\right)  }%
\sum_{r=0}^{\left\lfloor \frac{i}{2}\right\rfloor }\binom{n}{i-r}\binom
{i-r}{r}\left(  2\cos\left(  \frac{\alpha}{2}\right)  \right)  ^{i-2r}%
,~i=0,1,\ldots,2n
\]
fulfill the symmetry property%
\begin{equation}
t_{2n,i}^{\alpha}=t_{2n,2n-i}^{\alpha},~i=0,1,\ldots,n\text{.}
\label{symmetry_of_Sanchez_Reyes_constants}%
\end{equation}

\begin{definition}
[Trigonometric curves]A trigonometric curve of order $n$ (degree $2n$) can be
described as the convex combination%
\begin{equation}
\mathbf{t}_{n}^{\alpha}\left(  u\right)  =\sum_{i=0}^{2n}\mathbf{d}%
_{i}T_{2n,i}^{\alpha}\left(  u\right)  ,~u\in\left[  0,\alpha\right]  ,
\label{trigonometric_curve}%
\end{equation}
where $\left[  \mathbf{d}_{i}\right]  _{i=0}^{2n}\in\mathcal{M}_{1,2n+1}%
\left(
%TCIMACRO{\U{211d} }%
%BeginExpansion
\mathbb{R}
%EndExpansion
^{\delta}\right)  $ defines its control polygon.
\end{definition}

As stated in Remark \ref{rem:trigonometric_reparametrization} curves of type
(\ref{trigonometric_curve}) can also be obtained as a special trigonometric
reparametrization of a class of rational B\'{e}zier curves of even degree.

\begin{remark}
[Trigonometric reparametrization]\label{rem:trigonometric_reparametrization}%
Using the function%
\begin{equation}
\left\{
\begin{array}
[c]{l}%
v:\left[  0,\alpha\right]  \rightarrow\left[  0,1\right]  ,\\
\\
v\left(  u\right)  =\dfrac{1}{2}+\dfrac{\tan\left(  \frac{u}{2}-\frac{\alpha
}{4}\right)  }{2\tan\left(  \frac{\alpha}{4}\right)  }=\dfrac{\sin\left(
\frac{u}{2}\right)  }{2\cos\left(  \frac{\alpha}{4}-\frac{u}{2}\right)
\sin\left(  \frac{\alpha}{4}\right)  }%
\end{array}
\right.  \label{trigonometric_reparametrization}%
\end{equation}
and weights%
\begin{equation}
w_{i}=\frac{t_{2n,i}^{\alpha}}{\binom{2n}{i}},~i=0,1,\ldots,2n,
\label{trigonometric_weights}%
\end{equation}
one can reparametrize the rational B\'{e}zier curve
(\ref{rational_Bezier_curve}) into the trigonometric form
(\ref{trigonometric_curve}). Indeed, one has that%
\begin{align*}
w_{i}B_{i}^{2n}\left(  v\left(  u\right)  \right)   &  =\frac{t_{2n,i}%
^{\alpha}}{\binom{2n}{i}}\binom{2n}{i}v^{i}\left(  u\right)  \left(
1-v\left(  u\right)  \right)  ^{2n-i}\\
&  =t_{2n,i}^{\alpha}\cdot\dfrac{\sin^{i}\left(  \frac{u}{2}\right)  }%
{2^{i}\cos^{i}\left(  \frac{\alpha}{4}-\frac{u}{2}\right)  \sin^{i}\left(
\frac{\alpha}{4}\right)  }\cdot\frac{\sin^{2n-i}\left(  \frac{\alpha-u}%
{2}\right)  }{2^{2n-i}\cos^{2n-i}\left(  \frac{\alpha}{4}-\frac{u}{2}\right)
\sin^{2n-i}\left(  \frac{\alpha}{4}\right)  }\\
&  =\frac{1}{2^{2n}\cos^{2n}\left(  \frac{\alpha}{4}-\frac{u}{2}\right)
\sin^{2n}\left(  \frac{\alpha}{4}\right)  }\cdot T_{2n,i}^{\alpha}\left(
u\right)
\end{align*}
for all $i=0,1,\ldots,2n$ and $u\in\left[  0,\alpha\right]  $, therefore%
\[
\mathbf{r}_{2n}\left(  v\left(  u\right)  \right)  =\frac{%
%TCIMACRO{\dsum \limits_{i=0}^{2n}}%
%BeginExpansion
{\displaystyle\sum\limits_{i=0}^{2n}}
%EndExpansion
w_{i}\mathbf{d}_{i}B_{i}^{2n}\left(  v\left(  u\right)  \right)  }{%
%TCIMACRO{\dsum \limits_{j=0}^{2n}}%
%BeginExpansion
{\displaystyle\sum\limits_{j=0}^{2n}}
%EndExpansion
w_{j}B_{j}^{2n}\left(  v\left(  u\right)  \right)  }=\frac{%
%TCIMACRO{\dsum \limits_{i=0}^{2n}}%
%BeginExpansion
{\displaystyle\sum\limits_{i=0}^{2n}}
%EndExpansion
\mathbf{d}_{i}T_{2n,i}^{\alpha}\left(  u\right)  }{%
%TCIMACRO{\dsum \limits_{j=0}^{2n}}%
%BeginExpansion
{\displaystyle\sum\limits_{j=0}^{2n}}
%EndExpansion
T_{2n,j}^{\alpha}\left(  u\right)  }=%
%TCIMACRO{\dsum \limits_{i=0}^{2n}}%
%BeginExpansion
{\displaystyle\sum\limits_{i=0}^{2n}}
%EndExpansion
\mathbf{d}_{i}T_{2n,i}^{\alpha}\left(  u\right)  =\mathbf{t}_{n}^{\alpha
}\left(  u\right)  ,~\forall u\in\left[  0,\alpha\right]  ,
\]
since the function system (\ref{Sanchez_basis}) is normalized, i.e.,
$\sum_{j=0}^{2n}T_{2n,j}^{\alpha}\left(  u\right)  \equiv1$, $\forall
u\in\left[  0,\alpha\right]  $. Basis functions (\ref{Sanchez_basis}) and the
reparametrization function (\ref{trigonometric_reparametrization}) were
repeatedly applied in articles \cite{Sanchez1990}, \cite{Sanchez1997} and
\cite{Sanchez1998}, however the (inverse) transformation between bases
$\overline{\mathcal{T}}_{2n}^{\alpha}$ and $\mathcal{T}_{2n}^{\alpha}$ were
calculated only up to second order in \cite[p. 916]{Sanchez1998} with the aid
of a computer algebra system, moreover subdivision of such curves was
detailed only for very special control point configurations in
\cite{Sanchez1990}.
\end{remark}

\begin{remark}
[B-algorithm of trigonometric curves]\label{rem:trigonometric_subdivision}Due
to Remark \ref{rem:trigonometric_reparametrization}, the subdivision algorithm
of trigonometric curves of type (\ref{trigonometric_curve}) is a simple
corollary of the rational de Casteljau algorithm (\ref{rational_de_Casteljau}%
). One has to apply the parameter transformation
(\ref{trigonometric_reparametrization}) and initial weights
(\ref{trigonometric_weights}) in recursive formulae
(\ref{rational_de_Casteljau}). Fig. \ref{fig:subdivsion}(a) shows the steps of
this special variant of the classical rational corner cutting algorithm in
case of a third order trigonometric curve.
\end{remark}

%

%TCIMACRO{\FRAME{fhFU}{6.2388in}{3.243in}{0pt}{\Qcb{Consider the fixed control
%polygon $\left[  \QTR{bf}{d}_{i}\right]  _{i=0}^{6}$ and the shape parameter
%$\alpha=\frac{\pi}{2}$ that generate third order trigonometric and hyperbolic
%curves of the type (\ref{trigonometric_curve}) and (\ref{hyperbolic_curve}),
%respectively. Cases (\QTR{em}{a}) and (\QTR{em}{b}) illustrate the subdivision
%of these curve at the common parameter value $u=\frac{\pi}{4}$. (The parameter
%value $\left.  v\left(  u\right)  \right\vert _{u=\frac{\pi}{4}}=\frac{1}{2}$
%is generated by reparametrization functions
%(\ref{trigonometric_reparametrization}) and
%(\ref{hyperbolic_reparametrization}), respectively.)}}{\Qlb{fig:subdivsion}%
%}{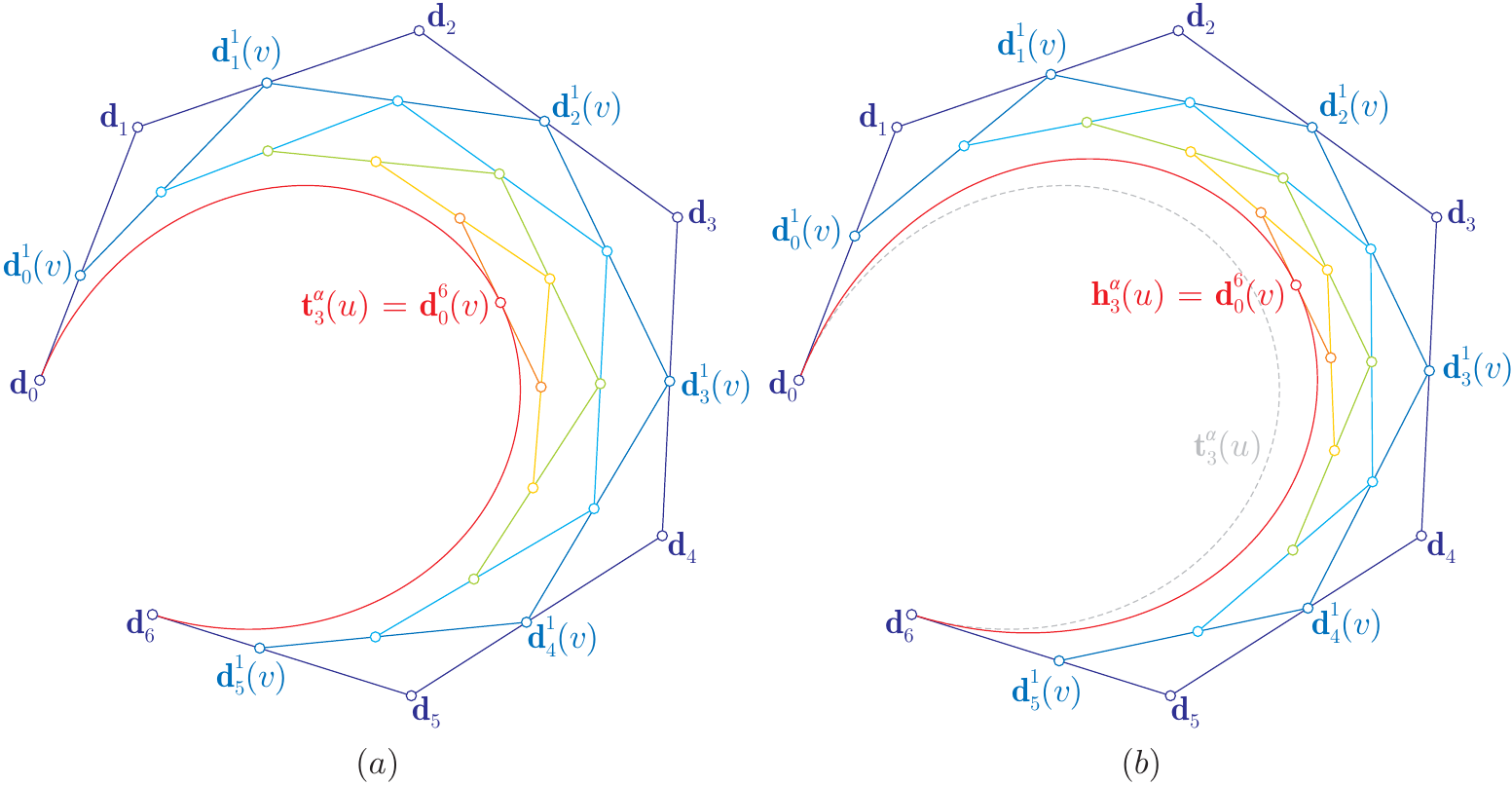}%
%{\special{ language "Scientific Word";  type "GRAPHIC";
%maintain-aspect-ratio TRUE;  display "USEDEF";  valid_file "F";
%width 6.2388in;  height 3.243in;  depth 0pt;  original-width 6.2111in;
%original-height 3.2154in;  cropleft "0";  croptop "1";  cropright "1";
%cropbottom "0";
%filename 'trigonometric_and_hyperbolic_subdivision_u_pi_per_4_alpha_pi_per_2.pdf';file-properties "XNPEU";}%
%} }%
%BeginExpansion
\begin{figure}
[!h]
\begin{center}
\includegraphics[
height=3.243in,
width=6.2388in
]%
{}%
\caption{Consider the fixed control polygon $\left[  \mathbf{d}_{i}\right]
_{i=0}^{6}$ and the shape parameter $\alpha=\frac{\pi}{2}$ that generate third
order trigonometric and hyperbolic curves of the type
(\ref{trigonometric_curve}) and (\ref{hyperbolic_curve}), respectively. Cases
(\emph{a}) and (\emph{b}) illustrate the subdivision of these curve at the
common parameter value $u=\frac{\pi}{4}$. (The parameter value $\left.
v\left(  u\right)  \right\vert _{u=\frac{\pi}{4}}=\frac{1}{2}$ is generated by
reparametrization functions (\ref{trigonometric_reparametrization}) and
(\ref{hyperbolic_reparametrization}), respectively.)}%
\label{fig:subdivsion}%
\end{center}
\end{figure}
%EndExpansion
\qquad

The order elevation of trigonometric curves of type
(\ref{trigonometric_weights}) was also considered in \cite[Section
4.2]{Sanchez1998}. This method will be one of the main auxiliary tools used by
the present paper, therefore we briefly recall this process, by using our notations.

\begin{remark}
[Order elevation of trigonometric curves]%
\label{rem:trigonometric_order_elevation}Multiplying the curve
(\ref{trigonometric_curve}) with the first order constant function%
\[
1\equiv T_{2,0}^{\alpha}\left(  u\right)  +T_{2,1}^{\alpha}\left(  u\right)
+T_{2,2}^{\alpha}\left(  u\right)  ,~\forall u\in\left[  0,\alpha\right]
\]
and applying the product rule%
\[
T_{2n,i}^{\alpha}\left(  u\right)  T_{2m,j}^{\alpha}\left(  u\right)
=\frac{t_{2n,i}^{\alpha}t_{2m,j}^{\alpha}}{t_{2\left(  n+m\right)
,i+j}^{\alpha}}T_{2\left(  n+m\right)  ,i+j}^{\alpha}\left(  u\right)
,~\forall u\in\left[  0,\alpha\right]  ,
\]
one obtains the trigonometric curve%
\[
\mathbf{t}_{n+1}^{\alpha}\left(  u\right)  =\sum_{r=0}^{2\left(  n+1\right)
}\mathbf{e}_{r}T_{2\left(  n+1\right)  ,r}^{\alpha}\left(  u\right)
,~u\in\left[  0,\alpha\right]
\]
of order $n+1$ such that%
\[
\mathbf{t}_{n+1}^{\alpha}\left(  u\right)  =\mathbf{t}_{n}^{\alpha}\left(
u\right)  ,~\forall u\in\left[  0,\alpha\right]  ,
\]
where%
\begin{equation}
\left\{
\begin{array}
[c]{lcl}%
\mathbf{e}_{0} & = & \mathbf{d}_{0}\dfrac{t_{2n,0}^{\alpha}t_{2,0}^{\alpha}%
}{t_{2\left(  n+1\right)  ,0}^{\alpha}}=\mathbf{d}_{0}\\
&  & \\
\mathbf{e}_{1} & = & \mathbf{d}_{0}\dfrac{t_{2n,0}^{\alpha}t_{2,1}^{\alpha}%
}{t_{2\left(  n+1\right)  ,1}^{\alpha}}+\mathbf{d}_{1}\dfrac{t_{2n,1}^{\alpha
}t_{2,0}^{\alpha}}{t_{2\left(  n+1\right)  ,1}^{\alpha}},\\
&  & \\
\mathbf{e}_{r} & = & \mathbf{d}_{r-2}\dfrac{t_{2n,r-2}^{\alpha}t_{2,2}%
^{\alpha}}{t_{2\left(  n+1\right)  ,r}^{\alpha}}+\mathbf{d}_{r-1}%
\dfrac{t_{2n,r-1}^{\alpha}t_{2,1}^{\alpha}}{t_{2\left(  n+1\right)
,r}^{\alpha}}+\mathbf{d}_{r}\dfrac{t_{2n,r}^{\alpha}t_{2,0}^{\alpha}%
}{t_{2\left(  n+1\right)  ,r}^{\alpha}},~r=2,3,\ldots,2n,\\
&  & \\
\mathbf{e}_{2n+1} & = & \mathbf{d}_{2n-1}\dfrac{t_{2n,2n-1}^{\alpha}%
t_{2,2}^{\alpha}}{t_{2\left(  n+1\right)  ,2n+1}^{\alpha}}+\mathbf{d}%
_{2n}\dfrac{t_{2n,2n}^{\alpha}t_{2,1}^{\alpha}}{t_{2\left(  n+1\right)
,2n+1}^{\alpha}},\\
&  & \\
\mathbf{e}_{2\left(  n+1\right)  } & = & \mathbf{d}_{2n}\dfrac{t_{2n,2n}%
^{\alpha}t_{2,2}^{\alpha}}{t_{2\left(  n+1\right)  ,2\left(  n+1\right)
}^{\alpha}}=\mathbf{d}_{2n}.
\end{array}
\right.  \label{trigonometric_order_elevation}%
\end{equation}

\end{remark}

Due to normality of functions systems $\overline{\mathcal{T}}_{2}^{\alpha}$,
$\overline{\mathcal{T}}_{2n}^{\alpha}$ and $\overline{\mathcal{T}}_{2\left(
n+1\right)  }^{\alpha}$, one has the simple equality%
\[
1^{n+1}=\sum_{r=0}^{2\left(  n+1\right)  }T_{2\left(  n+1\right)  ,r}^{\alpha
}\left(  u\right)  =\left(  \sum_{i=0}^{2}T_{2,i}^{\alpha}\left(  u\right)
\right)  \left(  \sum_{j=0}^{2n}T_{2n,j}^{\alpha}\left(  u\right)  \right)
=1\cdot1^{n},~\forall u\in\left[  0,\alpha\right]
\]
from which follows that%
\begin{align*}
1  &  =\frac{t_{2n,0}^{\alpha}t_{2,0}^{\alpha}}{t_{2\left(  n+1\right)
,0}^{\alpha}}=\frac{t_{2n,2n}^{\alpha}t_{2,2}^{\alpha}}{t_{2\left(
n+1\right)  ,2\left(  n+1\right)  }^{\alpha}},\\
1  &  =\frac{t_{2n,0}^{\alpha}t_{2,1}^{\alpha}}{t_{2\left(  n+1\right)
,1}^{\alpha}}+\frac{t_{2n,1}^{\alpha}t_{2,0}^{\alpha}}{t_{2\left(  n+1\right)
,1}^{\alpha}}=\frac{t_{2n,2n-1}^{\alpha}t_{2,2}^{\alpha}}{t_{2\left(
n+1\right)  ,2n+1}^{\alpha}}+\frac{t_{2n,2n}^{\alpha}t_{2,1}^{\alpha}%
}{t_{2\left(  n+1\right)  ,2n+1}^{\alpha}},\\
1  &  =\frac{t_{2n,r-2}^{\alpha}t_{2,2}^{\alpha}}{t_{2\left(  n+1\right)
,r}^{\alpha}}+\frac{t_{2n,r-1}^{\alpha}t_{2,1}^{\alpha}}{t_{2\left(
n+1\right)  ,r}^{\alpha}}+\frac{t_{2n,r}^{\alpha}t_{2,0}^{\alpha}}{t_{2\left(
n+1\right)  ,r}^{\alpha}},~r=2,3,\ldots,2n,
\end{align*}
i.e., all combinations that appear in the order elevation process
(\ref{trigonometric_order_elevation}) are convex. This observation implies
that the order elevated control polygon is closer to the shape of the curve
than its original one. Therefore, repeatedly increasing the order of the
trigonometric curve (\ref{trigonometric_curve}) from $n$ to $n+z$ ($z\geq1$),
we obtain a sequence of control polygons that converges to the curve generated
by the starting control polygon. This geometric property is illustrated in
Fig. \ref{fig:trigonometric_order_elevation} and it will be essential in case
of control point based exact description of higher dimensional rational
trigonometric curves and multivariate surfaces given in traditional parametric form.%

%TCIMACRO{\FRAME{fhFU}{2.9551in}{3.0268in}{0pt}{\Qcb{A plane third order
%trigonometric curve ($\alpha=\frac{\pi}{2}$) with its original and order
%elevated control polygons which form a sequence that converges to the curve
%generated by the original control polygon.}}%
%{\Qlb{fig:trigonometric_order_elevation}}%
%{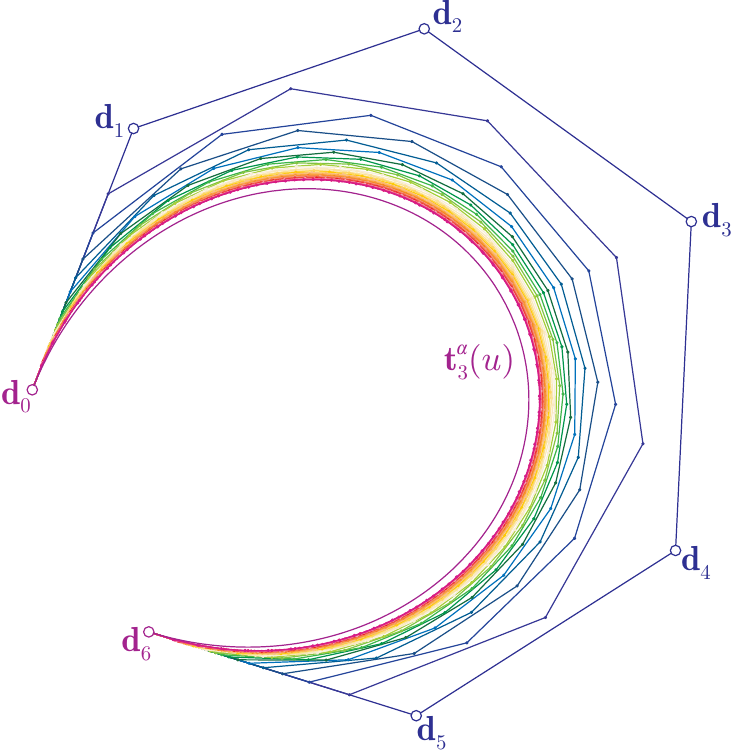}%
%{\special{ language "Scientific Word";  type "GRAPHIC";
%maintain-aspect-ratio TRUE;  display "USEDEF";  valid_file "F";
%width 2.9551in;  height 3.0268in;  depth 0pt;  original-width 2.9274in;
%original-height 3in;  cropleft "0";  croptop "1";  cropright "1";
%cropbottom "0";
%filename 'trigonometric_order_elevation_alpha_pi_per_2.pdf';file-properties "XNPEU";}%
%} }%
%BeginExpansion
\begin{figure}
[!h]
\begin{center}
\includegraphics[
height=3.0268in,
width=2.9551in
]%
{}%
\caption{A plane third order trigonometric curve ($\alpha=\frac{\pi}{2}$) with
its original and order elevated control polygons which form a sequence that
converges to the curve generated by the original control polygon.}%
\label{fig:trigonometric_order_elevation}%
\end{center}
\end{figure}
%EndExpansion

\begin{remark}
[Asymptotic behavior]\label{rem:trigonometric_asymptotic_behavior}As proved in
\cite[Proposition 2.1, p. 249]{JuhaszRoth2014}, the basis $\overline
{\mathcal{T}}_{2n}^{\alpha}$ degenerates to the classical Bernstein polynomial
basis $\mathcal{B}_{2n}$ defined over the unit compact interval as the shape
parameter $\alpha$ tends to $0$ from above. In this case the trigonometric
curve (\ref{trigonometric_curve}) becomes a classical B\'{e}zier curve of
degree $2n$, while the subdivision algorithm presented in Remark
\ref{rem:trigonometric_subdivision} degenerates to the classical non-rational
de Casteljau algorithm. Fig. \ref{fig:effect_of_trigonometric_shape_parameter}
illustrates the effect of the shape parameter $\alpha\in\left(  0,\pi\right)
$ on the image of a third order trigonometric curve.
\end{remark}

%

%TCIMACRO{\FRAME{fhFU}{3.0952in}{3.0398in}{0pt}{\Qcb{Effect of the design
%parameter $\alpha\in\left(  0,\pi\right)  $ on the shape of a third order
%trigonometric curve. In the limiting case $\alpha\rightarrow0$ the curve
%becomes a classical B\'{e}zier curve of degree $6$ (as it is expected, in this
%special case, the B-algorithm of the trigonometric curve degenerates to the
%classical corner cutting de Casteljau algorithm, i.e., each subdivision point
%is determined by the same ratio along the edges of the control polygon.)}%
%}{\Qlb{fig:effect_of_trigonometric_shape_parameter}}%
%{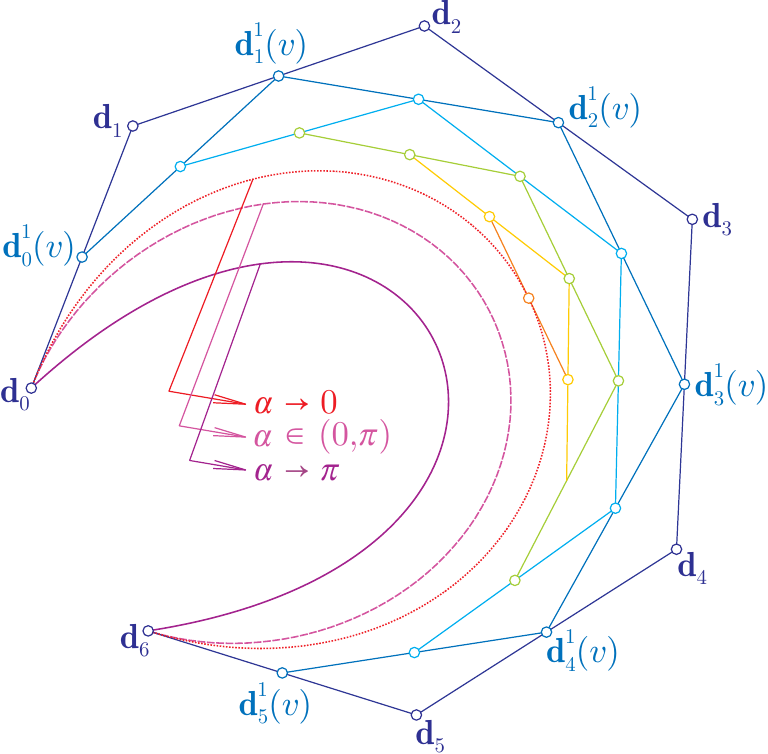}%
%{\special{ language "Scientific Word";  type "GRAPHIC";
%maintain-aspect-ratio TRUE;  display "USEDEF";  valid_file "F";
%width 3.0952in;  height 3.0398in;  depth 0pt;  original-width 3.0675in;
%original-height 3.0121in;  cropleft "0";  croptop "1";  cropright "1";
%cropbottom "0";
%filename 'effect_of_trigonometric_shape_parameter.EPS';file-properties "XNPEU";}%
%} }%
%BeginExpansion
\begin{figure}
[!h]
\begin{center}
\includegraphics[
height=3.0398in,
width=3.0952in
]%
{}%
\caption{Effect of the design parameter $\alpha\in\left(  0,\pi\right)  $ on
the shape of a third order trigonometric curve. In the limiting case
$\alpha\rightarrow0$ the curve becomes a classical B\'{e}zier curve of degree
$6$ (as it is expected, in this special case, the B-algorithm of the
trigonometric curve degenerates to the classical corner cutting de Casteljau
algorithm, i.e., each subdivision point is determined by the same ratio along
the edges of the control polygon.)}%
\label{fig:effect_of_trigonometric_shape_parameter}%
\end{center}
\end{figure}
%EndExpansion

\begin{definition}
[Rational trigonometric curves]The non-negative weight vector $\mathbf{%
%TCIMACRO{\TeXButton{w}{\boldsymbol{\omega}}}%
%BeginExpansion
\boldsymbol{\omega}%
%EndExpansion
}=\left[  \omega_{i}\right]  _{i=0}^{2n}$ of rank $1$ associated with the
control polygon $\left[  \mathbf{d}_{i}\right]  _{i=0}^{2n}\in\mathcal{M}%
_{1,2n+1}\left(
%TCIMACRO{\U{211d} }%
%BeginExpansion
\mathbb{R}
%EndExpansion
^{\delta}\right)  $ and the normalized linearly independent rational (or
quotient) functions%
\[
R_{2n,i}^{\alpha,\mathbf{%
%TCIMACRO{\TeXButton{w}{\boldsymbol{\omega}}}%
%BeginExpansion
\boldsymbol{\omega}%
%EndExpansion
}}\left(  u\right)  =\frac{\omega_{i}T_{2n,i}^{\alpha}\left(  u\right)  }{%
%TCIMACRO{\dsum \limits_{j=0}^{2n}}%
%BeginExpansion
{\displaystyle\sum\limits_{j=0}^{2n}}
%EndExpansion
\omega_{j}T_{2n,j}^{\alpha}\left(  u\right)  },~u\in\left[  0,\alpha\right]
,~i=0,1,\ldots,2n
\]
define the rational counterpart%
\begin{equation}
\mathbf{t}_{n}^{\alpha,\mathbf{%
%TCIMACRO{\TeXButton{w}{\boldsymbol{\omega}}}%
%BeginExpansion
\boldsymbol{\omega}%
%EndExpansion
}}\left(  u\right)  =\sum_{i=0}^{2n}\omega_{i}\mathbf{d}_{i}R_{2n,i}%
^{\alpha,\mathbf{%
%TCIMACRO{\TeXButton{w}{\boldsymbol{\omega}}}%
%BeginExpansion
\boldsymbol{\omega}%
%EndExpansion
}}\left(  u\right)  =\frac{%
%TCIMACRO{\dsum \limits_{i=0}^{2n}}%
%BeginExpansion
{\displaystyle\sum\limits_{i=0}^{2n}}
%EndExpansion
\omega_{i}\mathbf{d}_{i}T_{2n,i}^{\alpha}\left(  u\right)  }{%
%TCIMACRO{\dsum \limits_{j=0}^{2n}}%
%BeginExpansion
{\displaystyle\sum\limits_{j=0}^{2n}}
%EndExpansion
\omega_{j}T_{2n,j}^{\alpha}\left(  u\right)  },~u\in\left[  0,\alpha\right]
\label{rational_trigonometric_curve}%
\end{equation}
of the trigonometric curve (\ref{trigonometric_curve}).
\end{definition}

\begin{remark}
[Pre-image of rational trigonometric curves]The rational trigonometric curve
(\ref{rational_trigonometric_curve}) can also be considered as the central
projection of the higher dimensional curve%
\begin{equation}
\mathbf{t}_{n,\mathcal{\wp}}^{\alpha,\mathbf{%
%TCIMACRO{\TeXButton{w}{\boldsymbol{\omega}}}%
%BeginExpansion
\boldsymbol{\omega}%
%EndExpansion
}}\left(  u\right)  =\sum_{i=0}^{2n}\left[
\begin{array}
[c]{c}%
\omega_{i}\mathbf{d}_{i}\\
\omega_{i}%
\end{array}
\right]  T_{2n,i}^{\alpha}\left(  u\right)  ,~u\in\left[  0,\alpha\right]
\label{trigonometric_pre_image}%
\end{equation}
in the $\delta+1$ dimensional space from the origin onto the $\delta$
dimensional hyperplane $x^{\delta+1}=1$ (assuming that the coordinates of $%
%TCIMACRO{\U{211d} }%
%BeginExpansion
\mathbb{R}
%EndExpansion
^{\delta+1}$ are denoted by $x^{1},x^{2},\ldots,x^{\delta+1}$). The curve
(\ref{trigonometric_pre_image}) is called the pre-image of the rational curve
(\ref{rational_trigonometric_curve}), while the vector space $%
%TCIMACRO{\U{211d} }%
%BeginExpansion
\mathbb{R}
%EndExpansion
^{\delta+1}$ is called its pre-image space. This concept will be useful in
case of control point based exact description of smooth rational trigonometric
curves given in traditional parametric form expressed in the canonical basis
$\mathcal{T}_{2n}^{\alpha}$.
\end{remark}

\subsection{Hyperbolic curves and their rational counterpart}

In this case, let $\alpha>0$ be an arbitrarily fixed parameter and consider
the B-basis%
\begin{equation}
\overline{\mathcal{H}}_{2n}^{\alpha}=\left\{  H_{2n,i}^{\alpha}\left(
u\right)  :u\in\left[  0,\alpha\right]  \right\}  _{i=0}^{2n}=\left\{
h_{2n,i}^{\alpha}\sinh^{2n-i}\left(  \frac{\alpha-u}{2}\right)  \sinh
^{i}\left(  \frac{u}{2}\right)  :u\in\left[  0,\alpha\right]  \right\}
_{i=0}^{2n} \label{Wang_basis}%
\end{equation}
of order $n$ (degree $2n$) of the vector space (\ref{hyperbolic_vector_space})
introduced in \cite{ShenWang2005}, where the non-negative normalizing coefficients%
\[
h_{2n,i}^{\alpha}=\frac{1}{\sinh^{2n}\left(  \frac{\alpha}{2}\right)  }%
\sum_{r=0}^{\left\lfloor \frac{i}{2}\right\rfloor }\binom{n}{i-r}\binom
{i-r}{r}\left(  2\cosh\left(  \frac{\alpha}{2}\right)  \right)  ^{i-2r}%
,~i=0,1,\ldots,2n
\]
fulfill the symmetry property%
\begin{equation}
h_{2n,i}^{\alpha}=h_{2n,2n-i}^{\alpha},~i=0,1,\ldots,n\text{.}%
\end{equation}

\begin{definition}
[Hyperbolic curves]The convex combination%
\begin{equation}
\mathbf{h}_{n}^{\alpha}\left(  u\right)  =\sum_{i=0}^{2n}\mathbf{d}%
_{i}H_{2n,i}^{\alpha}\left(  u\right)  ,~u\in\left[  0,\alpha\right]  ,
\label{hyperbolic_curve}%
\end{equation}
defines a hyperbolic curve of order $n$ (degree $2n$), where $\left[
\mathbf{d}_{i}\right]  _{i=0}^{2n}\in\mathcal{M}_{1,2n+1}\left(
%TCIMACRO{\U{211d} }%
%BeginExpansion
\mathbb{R}
%EndExpansion
^{\delta}\right)  $ forms a control polygon.
\end{definition}

Similarly to Subsection \ref{sec:trigonometric_curves} it is easy to observe
that curves of type (\ref{hyperbolic_curve}) are in fact special
reparametrizations of a class of rational B\'{e}zier curves of even degree
$2n$. Instead of trigonometric sine, cosine, and tangent functions one has to
apply the hyperbolic variant of these functions, i.e., instead of parameter
transformation (\ref{trigonometric_reparametrization}) and weights
(\ref{trigonometric_weights}) one has to substitute the reparametrization
function%
\begin{equation}
\left\{
\begin{array}
[c]{l}%
v:\left[  0,\alpha\right]  \rightarrow\left[  0,1\right]  ,\\
\\
v\left(  u\right)  =\dfrac{1}{2}+\dfrac{\tanh\left(  \frac{u}{2}-\frac{\alpha
}{4}\right)  }{2\tanh\left(  \frac{\alpha}{4}\right)  }=\dfrac{\sinh\left(
\frac{u}{2}\right)  }{2\cosh\left(  \frac{\alpha}{4}-\frac{u}{2}\right)
\sinh\left(  \frac{\alpha}{4}\right)  }%
\end{array}
\right.  \label{hyperbolic_reparametrization}%
\end{equation}
and weights%
\begin{equation}
w_{i}=\frac{h_{2n,i}^{\alpha}}{\binom{2n}{i}},~i=0,1,\ldots,2n
\label{hyperbolic_weights}%
\end{equation}
into the rational B\'{e}zier curve (\ref{rational_Bezier_curve}), respectively.

Using observations similar to Remarks \ref{rem:trigonometric_subdivision},
\ref{rem:trigonometric_order_elevation} and
\ref{rem:trigonometric_asymptotic_behavior}, the subdivision, order elevation
and asymptotic behavior of hyperbolic curves of type (\ref{hyperbolic_curve})
can also be formulated. With the exception of the subdivision algorithm and
without the observation of the parameter transformation
(\ref{hyperbolic_reparametrization}) and special weight settings
(\ref{hyperbolic_weights}), the asymptotic behavior and the order elevation of
hyperbolic curves were first studied in \cite{ShenWang2005}. The steps of the
subdivision of a third order hyperbolic curve is presented in Fig.
\ref{fig:subdivsion}(\emph{b}).

The rational variant of the hyperbolic curve (\ref{hyperbolic_curve}) and its
pre-image can also be easily described.

\begin{definition}
[Rational hyperbolic curves]Consider the non-negative weight vector
$\mathbf{%
%TCIMACRO{\TeXButton{w}{\boldsymbol{\omega}}}%
%BeginExpansion
\boldsymbol{\omega}%
%EndExpansion
}=\left[  \omega_{i}\right]  _{i=0}^{2n}$ of rank $1$ associated with the
control polygon $\left[  \mathbf{d}_{i}\right]  _{i=0}^{2n}\in\mathcal{M}%
_{1,2n+1}\left(
%TCIMACRO{\U{211d} }%
%BeginExpansion
\mathbb{R}
%EndExpansion
^{\delta}\right)  $. Normalized quotient basis functions%
\[
S_{2n,i}^{\alpha,\mathbf{%
%TCIMACRO{\TeXButton{w}{\boldsymbol{\omega}}}%
%BeginExpansion
\boldsymbol{\omega}%
%EndExpansion
}}\left(  u\right)  =\frac{\omega_{i}H_{2n,i}^{\alpha}\left(  u\right)  }{%
%TCIMACRO{\dsum \limits_{j=0}^{2n}}%
%BeginExpansion
{\displaystyle\sum\limits_{j=0}^{2n}}
%EndExpansion
\omega_{j}H_{2n,j}^{\alpha}\left(  u\right)  },~u\in\left[  0,\alpha\right]
,~i=0,1,\ldots,2n
\]
generate the rational counterpart%
\begin{equation}
\mathbf{h}_{n}^{\alpha,\mathbf{%
%TCIMACRO{\TeXButton{w}{\boldsymbol{\omega}}}%
%BeginExpansion
\boldsymbol{\omega}%
%EndExpansion
}}\left(  u\right)  =\sum_{i=0}^{2n}\omega_{i}\mathbf{d}_{i}S_{2n,i}%
^{\alpha,\mathbf{%
%TCIMACRO{\TeXButton{w}{\boldsymbol{\omega}}}%
%BeginExpansion
\boldsymbol{\omega}%
%EndExpansion
}}\left(  u\right)  =\frac{%
%TCIMACRO{\dsum \limits_{i=0}^{2n}}%
%BeginExpansion
{\displaystyle\sum\limits_{i=0}^{2n}}
%EndExpansion
\omega_{i}\mathbf{d}_{i}H_{2n,i}^{\alpha}\left(  u\right)  }{%
%TCIMACRO{\dsum \limits_{j=0}^{2n}}%
%BeginExpansion
{\displaystyle\sum\limits_{j=0}^{2n}}
%EndExpansion
\omega_{j}H_{2n,j}^{\alpha}\left(  u\right)  },~u\in\left[  0,\alpha\right]
\label{rational_hyperbolic_curve}%
\end{equation}
of the hyperbolic curve (\ref{hyperbolic_curve}), the pre-image of which is%
\begin{equation}
\mathbf{h}_{n,\mathcal{\wp}}^{\alpha,\mathbf{%
%TCIMACRO{\TeXButton{w}{\boldsymbol{\omega}}}%
%BeginExpansion
\boldsymbol{\omega}%
%EndExpansion
}}\left(  u\right)  =\sum_{i=0}^{2n}\left[
\begin{array}
[c]{c}%
\omega_{i}\mathbf{d}_{i}\\
\omega_{i}%
\end{array}
\right]  H_{2n,i}^{\alpha}\left(  u\right)  ,~u\in\left[  0,\alpha\right]  .
\label{hyperbolic_pre_image}%
\end{equation}

\end{definition}

\section{(Hybrid) (rational) trigonometric and hyperbolic multivariate
surfaces\label{sec:multivariate_surfaces}}

By means of tensor products of curves of type (\ref{trigonometric_curve}) and
(\ref{hyperbolic_curve}) one can introduce the following multivariate higher
dimensional surface modeling tools. Let $\delta\geq2$ and $\kappa\geq0$
arbitrarily fixed natural numbers and consider the also fixed vector
$\mathbf{n}=\left[  n_{j}\right]  _{j=1}^{\delta}$ of orders, where $n_{j}%
\geq1$ for all $j=1,2,\ldots,\delta$.

\begin{definition}
[Trigonometric surfaces and their rational counterpart]%
\label{trigonometric_surface_definitions}Let
\[
\mathbf{%
%TCIMACRO{\TeXButton{a}{\boldsymbol{\alpha}}}%
%BeginExpansion
\boldsymbol{\alpha}%
%EndExpansion
=}\left[  \alpha_{j}\right]  _{j=1}^{\delta}\in\times_{j=1}^{\delta}\left(
0,\pi\right)
\]
be a fixed vector of shape parameters and consider the multidimensional
control grid
\begin{equation}
\left[  \mathbf{d}_{i_{1},i_{2},\ldots,i_{\delta}}\right]  _{i_{1}%
=0,i_{2}=0,\ldots,i_{\delta}=0}^{2n_{1},2n_{2},\ldots,2n_{\delta}}%
\in\mathcal{M}_{2n_{1}+1,2n_{2}+1,\ldots2n_{\delta}+1}\left(
%TCIMACRO{\U{211d} }%
%BeginExpansion
\mathbb{R}
%EndExpansion
^{\delta+\kappa}\right)  . \label{trigonometric_control_grid}%
\end{equation}
The multivariate surface%
\begin{align}
\mathbf{t}_{\mathbf{n}}^{\mathbf{%
%TCIMACRO{\TeXButton{a}{\boldsymbol{\alpha}}}%
%BeginExpansion
\boldsymbol{\alpha}%
%EndExpansion
}}\left(  \mathbf{u}\right)   &  =\mathbf{t}_{n_{1},n_{2},\ldots,n_{\delta}%
}^{\alpha_{1},\alpha_{2},\ldots,\alpha_{\delta}}\left(  u_{1},u_{2}%
,\ldots,u_{\delta}\right) \label{trigonometric_surface}\\
&  =\sum_{i_{1}=0}^{2n_{1}}\sum_{i_{2}=0}^{2n_{2}}\cdots\sum_{i_{\delta}%
=0}^{2n_{\delta}}\mathbf{d}_{i_{1},i_{2},\ldots,i_{\delta}}T_{2n_{1},i_{1}%
}^{\alpha_{1}}\left(  u_{1}\right)  T_{2n_{2},i_{2}}^{\alpha_{2}}\left(
u_{2}\right)  \cdot\ldots\cdot T_{2n_{\delta},i_{\delta}}^{\alpha_{\delta}%
}\left(  u_{\delta}\right)  ,~\mathbf{u}=\left[  u_{j}\right]  _{j=1}^{\delta
}\in\times_{j=1}^{\delta}\left[  0,\alpha_{j}\right] \nonumber
\end{align}
is called $\delta$-variate trigonometric surface of order $\mathbf{n}$ taking
values in $%
%TCIMACRO{\U{211d} }%
%BeginExpansion
\mathbb{R}
%EndExpansion
^{\delta+\kappa}$. Assigning the non-negative multidimensional weight matrix
\begin{equation}
\boldsymbol{\Omega}=\left[  \omega_{i_{1},i_{2},\ldots,i_{\delta}}\right]  _{i_{1}%
=0,i_{2}=0,\ldots,i_{\delta}=0}^{2n_{1},2n_{2},\ldots,2n_{\delta}}%
\in\mathcal{M}_{2n_{1}+1,2n_{2}+1,\ldots2n_{\delta}+1}\left(
%TCIMACRO{\U{211d} }%
%BeginExpansion
\mathbb{R}
%EndExpansion
_{+}\right)  \label{trigonometric_weight_grid}%
\end{equation}
of rank at least $1$ to the control grid (\ref{trigonometric_control_grid}),
one obtains the $\delta$-variate rational trigonometric surface%
\begin{align}
\mathbf{t}_{\mathbf{n}}^{\mathbf{%
%TCIMACRO{\TeXButton{a}{\boldsymbol{\alpha}}}%
%BeginExpansion
\boldsymbol{\alpha}%
%EndExpansion
},\boldsymbol{\Omega}}\left(  \mathbf{u}\right)   &  =\mathbf{t}_{n_{1},n_{2}%
,\ldots,n_{\delta}}^{\alpha_{1},\alpha_{2},\ldots,\alpha_{\delta},\boldsymbol{\Omega}
}\left(  u_{1},u_{2},\ldots,u_{\delta}\right)
\label{trigonometric_rational_surface}\\
&  =\frac{%
%TCIMACRO{\dsum \limits_{i_{1}=0}^{2n_{1}}}%
%BeginExpansion
{\displaystyle\sum\limits_{i_{1}=0}^{2n_{1}}}
%EndExpansion%
%TCIMACRO{\dsum \limits_{i_{2}=0}^{2n_{2}}}%
%BeginExpansion
{\displaystyle\sum\limits_{i_{2}=0}^{2n_{2}}}
%EndExpansion
\cdots%
%TCIMACRO{\dsum \limits_{i_{\delta}=0}^{2n_{\delta}}}%
%BeginExpansion
{\displaystyle\sum\limits_{i_{\delta}=0}^{2n_{\delta}}}
%EndExpansion
\omega_{i_{1},i_{2},\ldots,i_{\delta}}\mathbf{d}_{i_{1},i_{2},\ldots
,i_{\delta}}T_{2n_{1},i_{1}}^{\alpha_{1}}\left(  u_{1}\right)  T_{2n_{2}%
,i_{2}}^{\alpha_{2}}\left(  u_{2}\right)  \cdot\ldots\cdot T_{2n_{\delta
},i_{\delta}}^{\alpha_{\delta}}\left(  u_{\delta}\right)  }{%
%TCIMACRO{\dsum \limits_{j_{1}=0}^{2n_{1}}}%
%BeginExpansion
{\displaystyle\sum\limits_{j_{1}=0}^{2n_{1}}}
%EndExpansion%
%TCIMACRO{\dsum \limits_{j_{2}=0}^{2n_{2}}}%
%BeginExpansion
{\displaystyle\sum\limits_{j_{2}=0}^{2n_{2}}}
%EndExpansion
\cdots%
%TCIMACRO{\dsum \limits_{j_{\delta}=0}^{2n_{\delta}}}%
%BeginExpansion
{\displaystyle\sum\limits_{j_{\delta}=0}^{2n_{\delta}}}
%EndExpansion
\omega_{j_{1},j_{2},\ldots,j_{\delta}}T_{2n_{1},j_{1}}^{\alpha_{1}}\left(
u_{1}\right)  T_{2n_{2},j_{2}}^{\alpha_{2}}\left(  u_{2}\right)  \cdot
\ldots\cdot T_{2n_{\delta},j_{\delta}}^{\alpha_{\delta}}\left(  u_{\delta
}\right)  }\nonumber
\end{align}
of the same order, which is the central projection of the pre-image%
\begin{align}
\mathbf{t}_{\mathbf{n},\wp}^{\mathbf{%
%TCIMACRO{\TeXButton{a}{\boldsymbol{\alpha}}}%
%BeginExpansion
\boldsymbol{\alpha}%
%EndExpansion
},\boldsymbol{\Omega}}\left(  \mathbf{u}\right)   &  =\mathbf{t}_{n_{1},n_{2}%
,\ldots,n_{\delta},\wp}^{\alpha_{1},\alpha_{2},\ldots,\alpha_{\delta},\boldsymbol{\Omega}
}\left(  u_{1},u_{2},\ldots,u_{\delta}\right)  \label{trigonometric_preimage}%
\\
&  =\sum_{i_{1}=0}^{2n_{1}}\sum_{i_{2}=0}^{2n_{2}}\cdots\sum_{i_{\delta}%
=0}^{2n_{\delta}}\left[
\begin{array}
[c]{c}%
\omega_{i_{1},i_{2},\ldots,i_{\delta}}\mathbf{d}_{i_{1},i_{2},\ldots
,i_{\delta}}\\
\omega_{i_{1},i_{2},\ldots,i_{\delta}}%
\end{array}
\right]  T_{2n_{1},i_{1}}^{\alpha_{1}}\left(  u_{1}\right)  T_{2n_{2},i_{2}%
}^{\alpha_{2}}\left(  u_{2}\right)  \cdot\ldots\cdot T_{2n_{\delta},i_{\delta
}}^{\alpha_{\delta}}\left(  u_{\delta}\right) \nonumber
\end{align}
in $%
%TCIMACRO{\U{211d} }%
%BeginExpansion
\mathbb{R}
%EndExpansion
^{\delta+\kappa+1}$ from its origin onto the $\delta+\kappa$ dimensional
hyperplane $x^{\delta+k+1}=1$ (provided that the coordinates of $%
%TCIMACRO{\U{211d} }%
%BeginExpansion
\mathbb{R}
%EndExpansion
^{\delta+\kappa+1}$ are labeled by $x^{1},x^{2},\ldots,x^{\delta+k+1}$).
\end{definition}

\begin{remark}
[$3$-dimensional $2$-variate trigonometric surfaces]The simplest variant of
multivariate surfaces introduced in Definition
\ref{trigonometric_surface_definitions} corresponds to $\delta=2$ and
$\kappa=1$, when the $2$-variate trigonometric surface
(\ref{trigonometric_surface}) is a $3$-dimensional traditional tensor product
surface of curves of the type (\ref{trigonometric_curve}). In this special
case, the grid (\ref{trigonometric_control_grid}) of control points\ and the
multidimensional weight matrix (\ref{trigonometric_weight_grid}) degenerate to
a traditional control net and rectangular weight matrix, respectively.
\end{remark}

\begin{remark}
[$3$-dimensional trigonometric volumes]Using settings $\delta=3$ and
$\kappa=0$, Definition \ref{trigonometric_surface_definitions} describes
$3$-dimensional volumes (solids) by means of $3$-variate tensor product of
curves of the type (\ref{trigonometric_curve}).
\end{remark}

\begin{definition}
[Hyperbolic surfaces and their rational counterpart]%
\label{trigonometric_surface_definitions copy(1)}Let
\[
\mathbf{%
%TCIMACRO{\TeXButton{a}{\boldsymbol{\alpha}}}%
%BeginExpansion
\boldsymbol{\alpha}%
%EndExpansion
=}\left[  \alpha_{j}\right]  _{j=1}^{\delta}\in\times_{j=1}^{\delta}\left(
0,+\infty\right)
\]
be a fixed vector of shape parameters and consider the non-negative
multidimensional weight matrix
\[
\boldsymbol{\Omega}=\left[  \omega_{i_{1},i_{2},\ldots,i_{\delta}}\right]  _{i_{1}%
=0,i_{2}=0,\ldots,i_{\delta}=0}^{2n_{1},2n_{2},\ldots,2n_{\delta}}%
\in\mathcal{M}_{2n_{1}+1,2n_{2}+1,\ldots2n_{\delta}+1}\left(
%TCIMACRO{\U{211d} }%
%BeginExpansion
\mathbb{R}
%EndExpansion
_{+}\right)
\]
(of rank at least $1$) associated with the control grid
\[
\left[  \mathbf{d}_{i_{1},i_{2},\ldots,i_{\delta}}\right]  _{i_{1}%
=0,i_{2}=0,\ldots,i_{\delta}=0}^{2n_{1},2n_{2},\ldots,2n_{\delta}}%
\in\mathcal{M}_{2n_{1}+1,2n_{2}+1,\ldots2n_{\delta}+1}\left(
%TCIMACRO{\U{211d} }%
%BeginExpansion
\mathbb{R}
%EndExpansion
^{\delta+\kappa}\right)  .
\]
The multivariate hyperbolic surface of order $\mathbf{n}$, its rational
counterpart, and the pre-image of the rational variant are%
\begin{align}
\mathbf{h}_{\mathbf{n}}^{\mathbf{%
%TCIMACRO{\TeXButton{a}{\boldsymbol{\alpha}}}%
%BeginExpansion
\boldsymbol{\alpha}%
%EndExpansion
}}\left(  \mathbf{u}\right)   &  =\mathbf{h}_{n_{1},n_{2},\ldots,n_{\delta}%
}^{\alpha_{1},\alpha_{2},\ldots,\alpha_{\delta}}\left(  u_{1},u_{2}%
,\ldots,u_{\delta}\right) \label{hyperbolic_surface}\\
&  =\sum_{i_{1}=0}^{2n_{1}}\sum_{i_{2}=0}^{2n_{2}}\cdots\sum_{i_{\delta}%
=0}^{2n_{\delta}}\mathbf{d}_{i_{1},i_{2},\ldots,i_{\delta}}H_{2n_{1},i_{1}%
}^{\alpha_{1}}\left(  u_{1}\right)  H_{2n_{2},i_{2}}^{\alpha_{2}}\left(
u_{2}\right)  \cdot\ldots\cdot H_{2n_{\delta},i_{\delta}}^{\alpha_{\delta}%
}\left(  u_{\delta}\right)  ,\nonumber\\
\mathbf{u}  &  =\left[  u_{j}\right]  _{j=1}^{\delta}\in\times_{j=1}^{\delta
}\left[  0,\alpha_{j}\right]  ,\nonumber\\
& \nonumber\\
\mathbf{h}_{\mathbf{n}}^{\mathbf{%
%TCIMACRO{\TeXButton{a}{\boldsymbol{\alpha}}}%
%BeginExpansion
\boldsymbol{\alpha}%
%EndExpansion
},\boldsymbol{\Omega}}\left(  \mathbf{u}\right)   &  =\mathbf{h}_{n_{1},n_{2}%
,\ldots,n_{\delta}}^{\alpha_{1},\alpha_{2},\ldots,\alpha_{\delta},\boldsymbol{\Omega}
}\left(  u_{1},u_{2},\ldots,u_{\delta}\right)
\label{rational_hyperbolic_surface}\\
&  =\frac{%
%TCIMACRO{\dsum \limits_{i_{1}=0}^{2n_{1}}}%
%BeginExpansion
{\displaystyle\sum\limits_{i_{1}=0}^{2n_{1}}}
%EndExpansion%
%TCIMACRO{\dsum \limits_{i_{2}=0}^{2n_{2}}}%
%BeginExpansion
{\displaystyle\sum\limits_{i_{2}=0}^{2n_{2}}}
%EndExpansion
\cdots%
%TCIMACRO{\dsum \limits_{i_{\delta}=0}^{2n_{\delta}}}%
%BeginExpansion
{\displaystyle\sum\limits_{i_{\delta}=0}^{2n_{\delta}}}
%EndExpansion
\omega_{i_{1},i_{2},\ldots,i_{\delta}}\mathbf{d}_{i_{1},i_{2},\ldots
,i_{\delta}}H_{2n_{1},i_{1}}^{\alpha_{1}}\left(  u_{1}\right)  H_{2n_{2}%
,i_{2}}^{\alpha_{2}}\left(  u_{2}\right)  \cdot\ldots\cdot H_{2n_{\delta
},i_{\delta}}^{\alpha_{\delta}}\left(  u_{\delta}\right)  }{%
%TCIMACRO{\dsum \limits_{j_{1}=0}^{2n_{1}}}%
%BeginExpansion
{\displaystyle\sum\limits_{j_{1}=0}^{2n_{1}}}
%EndExpansion%
%TCIMACRO{\dsum \limits_{j_{2}=0}^{2n_{2}}}%
%BeginExpansion
{\displaystyle\sum\limits_{j_{2}=0}^{2n_{2}}}
%EndExpansion
\cdots%
%TCIMACRO{\dsum \limits_{j_{\delta}=0}^{2n_{\delta}}}%
%BeginExpansion
{\displaystyle\sum\limits_{j_{\delta}=0}^{2n_{\delta}}}
%EndExpansion
\omega_{j_{1},j_{2},\ldots,j_{\delta}}H_{2n_{1},j_{1}}^{\alpha_{1}}\left(
u_{1}\right)  H_{2n_{2},j_{2}}^{\alpha_{2}}\left(  u_{2}\right)  \cdot
\ldots\cdot H_{2n_{\delta},j_{\delta}}^{\alpha_{\delta}}\left(  u_{\delta
}\right)  }\nonumber
\end{align}
and%
\begin{align*}
\mathbf{h}_{\mathbf{n},\wp}^{\mathbf{%
%TCIMACRO{\TeXButton{a}{\boldsymbol{\alpha}}}%
%BeginExpansion
\boldsymbol{\alpha}%
%EndExpansion
},\boldsymbol{\Omega}}\left(  \mathbf{u}\right)   &  =\mathbf{h}_{n_{1},n_{2}%
,\ldots,n_{\delta},\wp}^{\alpha_{1},\alpha_{2},\ldots,\alpha_{\delta},\boldsymbol{\Omega}
}\left(  u_{1},u_{2},\ldots,u_{\delta}\right) \\
&  =\sum_{i_{1}=0}^{2n_{1}}\sum_{i_{2}=0}^{2n_{2}}\cdots\sum_{i_{\delta}%
=0}^{2n_{\delta}}\left[
\begin{array}
[c]{c}%
\omega_{i_{1},i_{2},\ldots,i_{\delta}}\mathbf{d}_{i_{1},i_{2},\ldots
,i_{\delta}}\\
\omega_{i_{1},i_{2},\ldots,i_{\delta}}%
\end{array}
\right]  H_{2n_{1},i_{1}}^{\alpha_{1}}\left(  u_{1}\right)  H_{2n_{2},i_{2}%
}^{\alpha_{2}}\left(  u_{2}\right)  \cdot\ldots\cdot H_{2n_{\delta},i_{\delta
}}^{\alpha_{\delta}}\left(  u_{\delta}\right)  ,
\end{align*}
respectively.
\end{definition}

\begin{remark}
[Hybrid multivariate surfaces]Naturally, one can\ also mix the trigonometric
or hyperbolic type of B-basis functions in directions $\left[  u_{j}\right]
_{j=1}^{\delta}$, i.e., one can also define higher dimensional hybrid
multivariate (rational) surfaces.
\end{remark}

\section{ Basis transformations\label{sec:basis_transformations}}

We are going to derive recursive formulae for the transformation of B-bases
$\overline{\mathcal{T}}_{2n}^{\alpha}$ and $\overline{\mathcal{H}}%
_{2n}^{\alpha}$ to the canonical bases $\mathcal{T}_{2n}^{\alpha}$ and
$\mathcal{H}_{2n}^{\alpha}$ of the vector spaces $\mathbb{T}_{2n}^{\alpha}$
and $\mathbb{H}_{2n}^{\alpha}$, respectively.

\subsection{The trigonometric
case\label{sec:trigonometric_basis_transformation}}

Let $k\in\left\{  0,1,\ldots,n\right\}  $ be an arbitrarily fixed natural
number. Assume that the unique representations of trigonometric functions
$\sin\left(  ku\right)  $ and $\cos\left(  ku\right)  $ in the basis
(\ref{Sanchez_basis}) of order $n$ are%
\begin{equation}
\sin\left(  ku\right)  =\sum_{i=0}^{2n}\lambda_{k,i}^{n}T_{2n,i}^{\alpha
}\left(  u\right)  ,~u\in\left[  0,\alpha\right]  \label{sine_form}%
\end{equation}
and%
\begin{equation}
\cos\left(  ku\right)  =\sum_{i=0}^{2n}\mu_{k,i}^{n}T_{2n,i}^{\alpha}\left(
u\right)  ,~u\in\left[  0,\alpha\right]  , \label{cosine_form}%
\end{equation}
respectively, where coefficients $\left\{  \lambda_{k,i}^{n}\right\}
_{i=0}^{2n}$ and $\left\{  \mu_{k,i}^{n}\right\}  _{i=0}^{2n}$ are unique real
numbers. The basis transformation from the first order B-basis $\overline
{\mathcal{T}}_{2}^{\alpha}$ to the first order trigonometric canonical basis
$\mathcal{T}_{2}^{\alpha}$ can be expressed in the matrix form%
\[
\left[
\begin{array}
[c]{c}%
1\\
\sin\left(  u\right) \\
\cos\left(  u\right)
\end{array}
\right]  =\left[
\begin{array}
[c]{ccc}%
\mu_{0,0}^{1} & \mu_{0,1}^{1} & \mu_{0,2}^{1}\\
\lambda_{1,0}^{1} & \lambda_{1,1}^{1} & \lambda_{1,2}^{1}\\
\mu_{1,0}^{1} & \mu_{1,1}^{1} & \mu_{1,2}^{1}%
\end{array}
\right]  \left[
\begin{array}
[c]{c}%
T_{2,0}^{\alpha}\left(  u\right) \\
T_{2,1}^{\alpha}\left(  u\right) \\
T_{2,2}^{\alpha}\left(  u\right)
\end{array}
\right]  ,~\forall u\in\left[  0,\alpha\right]  ,
\]
where%
\begin{equation}
\left\{
\begin{array}
[c]{l}%
\mu_{0,0}^{1}=\mu_{0,1}^{1}=\mu_{0,2}^{1}=1,\\
\\
\lambda_{1,0}^{1}=0,~\lambda_{1,1}^{1}=\tan\left(  \frac{\alpha}{2}\right)
,~\lambda_{1,2}^{1}=\sin\left(  \alpha\right)  ,\\
\\
\mu_{1,0}^{1}=\mu_{1,1}^{1}=1,~\mu_{1,2}^{1}=\cos\left(  \alpha\right)  .
\end{array}
\right.  \label{trigonometric_initial_conditions}%
\end{equation}

Using initial conditions (\ref{trigonometric_initial_conditions}), our
objective is to derive recursive formulae for the matrix elements of the
linear transformation that changes the higher order B-basis $\overline
{\mathcal{T}}_{2\left(  n+1\right)  }^{\alpha}$ to the canonical trigonometric
basis $\mathcal{T}_{2\left(  n+1\right)  }^{\alpha}$.

Performing order elevation on functions (\ref{sine_form}) and
(\ref{cosine_form}), one obtains that%
\[
\sin\left(  ku\right)  =\sum_{r=0}^{2\left(  n+1\right)  }\lambda_{k,r}%
^{n+1}T_{2\left(  n+1\right)  ,r}^{\alpha}\left(  u\right)
\]
and%
\[
\cos\left(  ku\right)  =\sum_{r=0}^{2\left(  n+1\right)  }\mu_{k,r}%
^{n+1}T_{2\left(  n+1\right)  ,r}^{\alpha}\left(  u\right)  ,
\]
where%
\begin{align*}
\lambda_{k,0}^{n+1}  &  =\lambda_{k,0}^{n},\\
\lambda_{k,1}^{n+1}  &  =\lambda_{k,0}^{n}\frac{t_{2n,0}^{\alpha}%
t_{2,1}^{\alpha}}{t_{2\left(  n+1\right)  ,1}^{\alpha}}+\lambda_{k,1}^{n}%
\frac{t_{2n,1}^{\alpha}t_{2,0}^{\alpha}}{t_{2\left(  n+1\right)  ,1}^{\alpha}%
},\\
\lambda_{k,r}^{n+1}  &  =\lambda_{k,r-2}^{n}\frac{t_{2n,r-2}^{\alpha}%
t_{2,2}^{\alpha}}{t_{2\left(  n+1\right)  ,r}^{\alpha}}+\lambda_{k,r-1}%
^{n}\frac{t_{2n,r-1}^{\alpha}t_{2,1}^{\alpha}}{t_{2\left(  n+1\right)
,r}^{\alpha}}+\lambda_{k,r}^{n}\frac{t_{2n,r}^{\alpha}t_{2,0}^{\alpha}%
}{t_{2\left(  n+1\right)  ,r}^{\alpha}},~r=2,3,\ldots,2n,\\
\lambda_{k,2n+1}^{n+1}  &  =\lambda_{k,2n-1}^{n}\frac{t_{2n,2n-1}^{\alpha
}t_{2,2}^{\alpha}}{t_{2\left(  n+1\right)  ,2n+1}^{\alpha}}+\lambda_{k,2n}%
^{n}\frac{t_{2n,2n}^{\alpha}t_{2,1}^{\alpha}}{t_{2\left(  n+1\right)
,2n+1}^{\alpha}},\\
\lambda_{k,2\left(  n+1\right)  }^{n+1}  &  =\lambda_{k,2n}^{n}%
\end{align*}
and%
\begin{align*}
\mu_{k,0}^{n+1}  &  =\mu_{k,0}^{n},\\
\mu_{k,1}^{n+1}  &  =\mu_{k,0}^{n}\frac{t_{2n,0}^{\alpha}t_{2,1}^{\alpha}%
}{t_{2\left(  n+1\right)  ,1}^{\alpha}}+\mu_{k,1}^{n}\frac{t_{2n,1}^{\alpha
}t_{2,0}^{\alpha}}{t_{2\left(  n+1\right)  ,1}^{\alpha}},\\
\mu_{k,r}^{n+1}  &  =\mu_{k,r-2}^{n}\frac{t_{2n,r-2}^{\alpha}t_{2,2}^{\alpha}%
}{t_{2\left(  n+1\right)  ,r}^{\alpha}}+\mu_{k,r-1}^{n}\frac{t_{2n,r-1}%
^{\alpha}t_{2,1}^{\alpha}}{t_{2\left(  n+1\right)  ,r}^{\alpha}}+\mu_{k,r}%
^{n}\frac{t_{2n,r}^{\alpha}t_{2,0}^{\alpha}}{t_{2\left(  n+1\right)
,r}^{\alpha}},~r=2,3,\ldots,2n,\\
\mu_{k,2n+1}^{n+1}  &  =\mu_{k,2n-1}^{n}\frac{t_{2n,2n-1}^{\alpha}%
t_{2,2}^{\alpha}}{t_{2\left(  n+1\right)  ,2n+1}^{\alpha}}+\mu_{k,2n}^{n}%
\frac{t_{2n,2n}^{\alpha}t_{2,1}^{\alpha}}{t_{2\left(  n+1\right)
,2n+1}^{\alpha}},\\
\mu_{k,2\left(  n+1\right)  }^{n+1}  &  =\mu_{k,2n}^{n},
\end{align*}
respectively. Moreover, due to initial conditions
(\ref{trigonometric_initial_conditions}) and simple trigonometric identities
\begin{align}
\sin\left(  a+b\right)   &  =\sin\left(  a\right)  \cos\left(  b\right)
+\cos\left(  a\right)  \sin\left(  b\right)  ,\label{sin_of_sum}\\
\cos\left(  a+b\right)   &  =\cos\left(  a\right)  \cos\left(  b\right)
-\sin\left(  a\right)  \sin\left(  b\right)  , \label{cos_of_sum}%
\end{align}
one has that%
\begin{align*}
\sin\left(  \left(  n+1\right)  u\right)   &  =\left(  \sum_{i=0}^{2n}%
\lambda_{n,i}^{n}T_{2n,i}^{\alpha}\left(  u\right)  \right)  \left(
\sum_{j=0}^{2}\mu_{1,j}^{1}T_{2,j}^{\alpha}\left(  u\right)  \right)  +\left(
\sum_{i=0}^{2n}\mu_{n,i}^{n}T_{2n,i}^{\alpha}\left(  u\right)  \right)
\left(  \sum_{j=0}^{2}\lambda_{1,j}^{1}T_{2,j}^{\alpha}\left(  u\right)
\right) \\
&  =\sum_{r=0}^{2\left(  n+1\right)  }\lambda_{n+1,r}^{n+1}T_{2\left(
n+1\right)  ,r}^{\alpha}\left(  u\right)  ,\\
\cos\left(  \left(  n+1\right)  u\right)   &  =\left(  \sum_{i=0}^{2n}%
\mu_{n,i}^{n}T_{2n,i}^{\alpha}\left(  u\right)  \right)  \left(  \sum
_{j=0}^{2}\mu_{1,j}^{1}T_{2,j}^{\alpha}\left(  u\right)  \right)  -\left(
\sum_{i=0}^{2n}\lambda_{n,i}^{n}T_{2n,i}^{\alpha}\left(  u\right)  \right)
\left(  \sum_{j=0}^{2}\lambda_{1,j}^{1}T_{2,j}^{\alpha}\left(  u\right)
\right) \\
&  =\sum_{r=0}^{2\left(  n+1\right)  }\mu_{n+1,r}^{n+1}T_{2\left(  n+1\right)
,r}^{\alpha}\left(  u\right)  ,
\end{align*}
where%
\begin{align*}
\lambda_{n+1,0}^{n+1}=  &  \lambda_{n,0}^{n}\mu_{1,0}^{1}+\mu_{n,0}^{n}%
\lambda_{1,0}^{1},\\
\lambda_{n+1,1}^{n+1}=  &  \left(  \lambda_{n,0}^{n}\mu_{1,1}^{1}+\mu
_{n,0}^{n}\lambda_{1,1}^{1}\right)  \frac{t_{2n,0}^{\alpha}t_{2,1}^{\alpha}%
}{t_{2\left(  n+1\right)  ,1}^{\alpha}}+\left(  \lambda_{n,1}^{n}\mu_{1,0}%
^{1}+\mu_{n,1}^{n}\lambda_{1,0}^{1}\right)  \frac{t_{2n,1}^{\alpha}%
t_{2,0}^{\alpha}}{t_{2\left(  n+1\right)  ,1}^{\alpha}},\\
\lambda_{n+1,r}^{n+1}=  &  \left(  \lambda_{n,r-2}^{n}\mu_{1,2}^{1}%
+\mu_{n,r-2}^{n}\lambda_{1,2}^{1}\right)  \frac{t_{2n,r-2}^{\alpha}%
t_{2,2}^{\alpha}}{t_{2\left(  n+1\right)  ,r}^{\alpha}}+\left(  \lambda
_{n,r-1}^{n}\mu_{1,1}^{1}+\mu_{n,r-1}^{n}\lambda_{1,1}^{1}\right)
\frac{t_{2n,r-1}^{\alpha}t_{2,1}^{\alpha}}{t_{2\left(  n+1\right)  ,r}%
^{\alpha}}\\
&  +\left(  \lambda_{n,r}^{n}\mu_{1,0}^{1}+\mu_{n,r}^{n}\lambda_{1,0}%
^{1}\right)  \frac{t_{2n,r}^{\alpha}t_{2,0}^{\alpha}}{t_{2\left(  n+1\right)
,r}^{\alpha}},~r=2,3,\ldots,2n,\\
\lambda_{n+1,2n+1}^{n+1}=  &  \left(  \lambda_{n,2n-1}^{n}\mu_{1,2}^{1}%
+\mu_{n,2n-1}^{n}\lambda_{1,2}^{1}\right)  \frac{t_{2n,2n-1}^{\alpha}%
t_{2,2}^{\alpha}}{t_{2\left(  n+1\right)  ,2n+1}^{\alpha}}+\left(
\lambda_{n,2n}^{n}\mu_{1,1}^{1}+\mu_{n,2n}^{n}\lambda_{1,1}^{1}\right)
\frac{t_{2n,2n}^{\alpha}t_{2,1}^{\alpha}}{t_{2\left(  n+1\right)
,2n+1}^{\alpha}},\\
\lambda_{n+1,2\left(  n+1\right)  }^{n+1}=  &  \lambda_{n,2n}^{n}\mu_{1,2}%
^{1}+\mu_{n,2n}^{n}\lambda_{1,2}^{1}%
\end{align*}
and%
\begin{align*}
\mu_{n+1,0}^{n+1}=  &  \mu_{n,0}^{n}\mu_{1,0}^{1}-\lambda_{n,0}^{n}%
\lambda_{1,0}^{1},\\
\mu_{n+1,1}^{n+1}=  &  \left(  \mu_{n,0}^{n}\mu_{1,1}^{1}-\lambda_{n,0}%
^{n}\lambda_{1,1}^{1}\right)  \frac{t_{2n,0}^{\alpha}t_{2,1}^{\alpha}%
}{t_{2\left(  n+1\right)  ,1}^{\alpha}}+\left(  \mu_{n,1}^{n}\mu_{1,0}%
^{1}-\lambda_{n,1}^{n}\lambda_{1,0}^{1}\right)  \frac{t_{2n,1}^{\alpha}%
t_{2,0}^{\alpha}}{t_{2\left(  n+1\right)  ,1}^{\alpha}},\\
\mu_{n+1,r}^{n+1}=  &  \left(  \mu_{n,r-2}^{n}\mu_{1,2}^{1}-\lambda
_{n,r-2}^{n}\lambda_{1,2}^{1}\right)  \frac{t_{2n,r-2}^{\alpha}t_{2,2}%
^{\alpha}}{t_{2\left(  n+1\right)  ,r}^{\alpha}}+\left(  \mu_{n,r-1}^{n}%
\mu_{1,1}^{1}-\lambda_{n,r-1}^{n}\lambda_{1,1}^{1}\right)  \frac
{t_{2n,r-1}^{\alpha}t_{2,1}^{\alpha}}{t_{2\left(  n+1\right)  ,r}^{\alpha}}\\
&  +\left(  \mu_{n,r}^{n}\mu_{1,0}^{1}-\lambda_{n,r}^{n}\lambda_{1,0}%
^{1}\right)  \frac{t_{2n,r}^{\alpha}t_{2,0}^{\alpha}}{t_{2\left(  n+1\right)
,r}^{\alpha}},~r=2,3,\ldots,2n,\\
\mu_{n+1,2n+1}^{n+1}=  &  \left(  \mu_{n,2n-1}^{n}\mu_{1,2}^{1}-\lambda
_{n,2n-1}^{n}\lambda_{1,2}^{1}\right)  \frac{t_{2n,2n-1}^{\alpha}%
t_{2,2}^{\alpha}}{t_{2\left(  n+1\right)  ,2n+1}^{\alpha}}+\left(  \mu
_{n,2n}^{n}\mu_{1,1}^{1}-\lambda_{n,2n}^{n}\lambda_{1,1}^{1}\right)
\frac{t_{2n,2n}^{\alpha}t_{2,1}^{\alpha}}{t_{2\left(  n+1\right)
,2n+1}^{\alpha}},\\
\mu_{n+1,2\left(  n+1\right)  }^{n+1}=  &  \mu_{n,2n}^{n}\mu_{1,2}^{1}%
-\lambda_{n,2n}^{n}\lambda_{1,2}^{1},
\end{align*}
respectively. Summarizing all calculations above, we have proved the next theorem.

\begin{theorem}
[Trigonometric basis transformation]%
\label{thm:trigonometric_basis_transformation}The matrix form of the linear
transformation from the normalized B-basis $\overline{\mathcal{T}}_{2\left(  n+1\right)
}^{\alpha}$ to the canonical trigonometric basis $\mathcal{T}_{2\left(
n+1\right)  }^{\alpha}$ is%
\[
\left[
\begin{array}
[c]{c}%
1\\
\sin\left(  u\right)  \\
\cos\left(  u\right)  \\
\vdots\\
\sin\left(  \left(  n+1\right)  u\right)  \\
\cos\left(  \left(  n+1\right)  u\right)
\end{array}
\right]  =\left[
\begin{array}
[c]{cccccc}%
1 & 1 & 1 & \cdots & 1 & 1\\
\lambda_{1,0}^{n+1} & \lambda_{1,1}^{n+1} & \lambda_{1,2}^{n+1} & \cdots &
\lambda_{1,2n+1}^{n+1} & \lambda_{1,2\left(  n+1\right)  }^{n+1}\\
\mu_{1,0}^{n+1} & \mu_{1,1}^{n+1} & \mu_{1,2}^{n+1} & \cdots & \mu
_{1,2n+1}^{n+1} & \mu_{1,2\left(  n+1\right)  }^{n+1}\\
\vdots & \vdots & \vdots &  & \vdots & \vdots\\
\lambda_{n+1,0}^{n+1} & \lambda_{n+1,1}^{n+1} & \lambda_{n+1,2}^{n+1} & \cdots
& \lambda_{n+1,2n+1}^{n+1} & \lambda_{n+1,2\left(  n+1\right)  }^{n+1}\\
\mu_{n+1,0}^{n+1} & \mu_{n+1,1}^{n+1} & \mu_{n+1,2}^{n+1} & \cdots &
\mu_{n+1,2n+1}^{n+1} & \mu_{n+1,2\left(  n+1\right)  }^{n+1}%
\end{array}
\right]  \left[
\begin{array}
[c]{c}%
T_{2\left(  n+1\right)  ,0}^{\alpha}\left(  u\right)  \\
T_{2\left(  n+1\right)  ,1}^{\alpha}\left(  u\right)  \\
T_{2\left(  n+1\right)  ,2}^{\alpha}\left(  u\right)  \\
\vdots\\
T_{2\left(  n+1\right)  ,2n+1}^{\alpha}\left(  u\right)  \\
T_{2\left(  n+1\right)  ,2\left(  n+1\right)  }^{\alpha}\left(  u\right)
\end{array}
\right]
\]
for all parameters $u\in\left[  0,\alpha\right]  $.
\end{theorem}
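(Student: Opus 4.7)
My plan is to prove the theorem by induction on $n$, simultaneously tracking the coefficients $\lambda_{k,i}^n$ and $\mu_{k,i}^n$ for all frequencies $k=0,1,\ldots,n$ appearing in the matrix. The base case $n=0$ reduces to verifying the three explicit rows corresponding to $1$, $\sin(u)$, $\cos(u)$ in the basis $\overline{\mathcal{T}}_2^{\alpha}$; this is a direct calculation using the double-angle identities $\sin(u)=2\sin(u/2)\cos(u/2)$ and $\cos(u)=\cos^2(u/2)-\sin^2(u/2)$, the explicit values of $t_{2,0}^{\alpha}$, $t_{2,1}^{\alpha}$, $t_{2,2}^{\alpha}$, and the trigonometric addition formula $\sin(u)=\sin((\alpha-(\alpha-u))) \cdot \text{etc.}$ to bring everything into the form $\sin^{2-i}((\alpha-u)/2)\sin^i(u/2)$, yielding exactly the initial conditions~(\ref{trigonometric_initial_conditions}).

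For the inductive step, I would split the work into two independent mechanisms. The first handles the low-frequency rows $k=1,2,\ldots,n$: assuming $\sin(ku)$ and $\cos(ku)$ already admit the representations~(\ref{sine_form}) and~(\ref{cosine_form}) in $\overline{\mathcal{T}}_{2n}^{\alpha}$, I apply the order-elevation construction from Remark~\ref{rem:trigonometric_order_elevation}, treating each scalar $\lambda_{k,i}^n$ (respectively $\mu_{k,i}^n$) exactly as the author's template treats a control point $\mathbf{d}_i$. The resulting raised coefficients $\lambda_{k,r}^{n+1}$ and $\mu_{k,r}^{n+1}$ then satisfy the same recursion (\ref{trigonometric_order_elevation}) applied componentwise, which gives the first $2n+1$ rows of the $(n+1)$-st order matrix.

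The second mechanism produces the two new bottom rows corresponding to $k=n+1$. Starting from the addition formulas~(\ref{sin_of_sum})--(\ref{cos_of_sum}) with $a=nu$, $b=u$, I substitute the inductively known order-$n$ expansions of $\sin(nu)$, $\cos(nu)$ and the order-$1$ expansions of $\sin(u)$, $\cos(u)$, obtaining four products of the form $\bigl(\sum_i c_i^{n} T_{2n,i}^{\alpha}\bigr)\bigl(\sum_j c_j^{1} T_{2,j}^{\alpha}\bigr)$. Applying the product rule $T_{2n,i}^{\alpha}(u)T_{2,j}^{\alpha}(u)=\frac{t_{2n,i}^{\alpha}t_{2,j}^{\alpha}}{t_{2(n+1),i+j}^{\alpha}}T_{2(n+1),i+j}^{\alpha}(u)$ recalled in Remark~\ref{rem:trigonometric_order_elevation} and then collecting terms by the index $r=i+j\in\{0,1,\ldots,2(n+1)\}$ yields the displayed closed forms for $\lambda_{n+1,r}^{n+1}$ and $\mu_{n+1,r}^{n+1}$. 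Stacking all rows produces the asserted matrix identity, with the top (constant) row being a restatement of the normalization $\sum_{r=0}^{2(n+1)}T_{2(n+1),r}^{\alpha}(u)\equiv 1$.

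The only delicate part is the bookkeeping near the endpoints of the index range when combining the two products for $k=n+1$: for $r\in\{0,1,2n+1,2(n+1)\}$ only a subset of pairs $(i,j)$ with $i+j=r$ contributes, so one must record these boundary cases separately — mirroring exactly the splitting the author already performed in the paragraphs above the theorem statement. Everything else is routine indexing, and no new identities beyond~(\ref{sin_of_sum})--(\ref{cos_of_sum}), the product rule, and the order-elevation recursion are needed.
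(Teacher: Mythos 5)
Your proposal is correct and follows essentially the same route as the paper: the explicit first-order initial conditions~(\ref{trigonometric_initial_conditions}) as the base case, order elevation~(\ref{trigonometric_order_elevation}) applied to the coefficients of $\sin(ku)$, $\cos(ku)$ for $k\le n$ to raise the existing rows, and the addition formulas~(\ref{sin_of_sum})--(\ref{cos_of_sum}) combined with the product rule for $T^{\alpha}_{2n,i}T^{\alpha}_{2,j}$ to generate the two new rows for $k=n+1$. The endpoint bookkeeping you flag for $r\in\{0,1,2n+1,2(n+1)\}$ is handled in the paper exactly as you describe, so nothing further is needed.
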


\begin{remark}
The matrix of the $\left(  n+1\right)  $th order basis transformation that appears in Theorem \ref{thm:trigonometric_basis_transformation} can be efficiently calculated by parallel programming since its rows and their entries are independent of each other. Based on the entries of the first and $n$th order transformation matrices that are already calculated in previous steps, each thread block has to build up a single row of the $\left(  n+1\right)  $th order basis transformation matrix, while each thread within a block has to calculate a single entry of the corresponding row.
\end{remark}

\subsection{The hyperbolic case}

In this case we can proceed as in Subsection
\ref{sec:trigonometric_basis_transformation}. Naturally, instead of
trigonometric sine, cosine, tangent functions and identities (\ref{sin_of_sum}%
) and (\ref{cos_of_sum}) one has to apply the hyperbolic variant of these
functions and identities, respectively. The only difference consists in a sign
change in the hyperbolic counterpart of the identity (\ref{cos_of_sum}),
since
\begin{equation}
\cosh\left(  a+b\right)  =\cosh\left(  a\right)  \cosh\left(  b\right)
+\sinh\left(  a\right)  \sinh\left(  b\right)  . \label{cosh_of_sum}%
\end{equation}
Let $k\in\left\{  0,1,\ldots,n\right\}  $ be an arbitrarily fixed natural
number and denote the representations of hyperbolic functions $\sinh\left(
ku\right)  $ and $\cosh\left(  ku\right)  $ in the B-basis (\ref{Wang_basis})
by%
\begin{equation}
\sinh\left(  ku\right)  =\sum_{i=0}^{2n}\sigma_{k,i}^{n}H_{2n,i}^{\alpha
}\left(  u\right)  ,~u\in\left[  0,\alpha\right]
\end{equation}
and%
\begin{equation}
\cosh\left(  ku\right)  =\sum_{i=0}^{2n}\rho_{k,i}^{n}H_{2n,i}^{\alpha}\left(
u\right)  ,~u\in\left[  0,\alpha\right]
\end{equation}
respectively, where coefficients $\left\{  \sigma_{k,i}^{n}\right\}
_{i=0}^{2n}$ and $\left\{  \rho_{k,i}^{n}\right\}  _{i=0}^{2n}$ are unique
scalars. The basis transformation from the first order B-basis $\overline
{\mathcal{H}}_{2}^{\alpha}$ to the first order hyperbolic canonical basis
$\mathcal{H}_{2}^{\alpha}$ can be written in the matrix form%
\[
\left[
\begin{array}
[c]{c}%
1\\
\sinh\left(  u\right) \\
\cosh\left(  u\right)
\end{array}
\right]  =\left[
\begin{array}
[c]{ccc}%
\rho_{0,0}^{1} & \rho_{0,1}^{1} & \rho_{0,2}^{1}\\
\sigma_{1,0}^{1} & \sigma_{1,1}^{1} & \sigma_{1,2}^{1}\\
\rho_{1,0}^{1} & \rho_{1,1}^{1} & \rho_{1,2}^{1}%
\end{array}
\right]  \left[
\begin{array}
[c]{c}%
H_{2,0}^{\alpha}\left(  u\right) \\
H_{2,1}^{\alpha}\left(  u\right) \\
H_{2,2}^{\alpha}\left(  u\right)
\end{array}
\right]  ,~\forall u\in\left[  0,\alpha\right]  ,
\]
where%
\begin{equation}
\left\{
\begin{array}
[c]{l}%
\rho_{0,0}^{1}=\rho_{0,1}^{1}=\rho_{0,2}^{1}=1,\\
\\
\sigma_{1,0}^{1}=0,~\sigma_{1,1}^{1}=\tanh\left(  \frac{\alpha}{2}\right)
,~\sigma_{1,2}^{1}=\sinh\left(  \alpha\right)  ,\\
\\
\rho_{1,0}^{1}=\rho_{1,1}^{1}=1,~\rho_{1,2}^{1}=\cosh\left(  \alpha\right)  .
\end{array}
\right.  \label{hyperbolic_initial_conditions}%
\end{equation}
Using initial conditions (\ref{hyperbolic_initial_conditions}) and normalizing
constants $\left[  h_{2,j}^{\alpha}\right]  _{j=0}^{2},\left[  h_{2n,i}%
^{\alpha}\right]  _{i=0}^{2n}$ and $\left[  h_{2\left(  n+1\right)
,r}^{\alpha}\right]  _{r=0}^{2\left(  n+1\right)  }$ instead of $\left[
t_{2,j}^{\alpha}\right]  _{j=0}^{2},\left[  t_{2n,i}^{\alpha}\right]
_{i=0}^{2n}$ and $\left[  t_{2\left(  n+1\right)  ,r}^{\alpha}\right]
_{r=0}^{2\left(  n+1\right)  }$, respectively, one obtains recursive formulae
for the unique order elevated coefficients $\left[  \sigma_{k,r}^{n+1}\right]
_{k=0,r=0}^{n+1,2\left(  n+1\right)  }$ and $\left[  \rho_{k,r}^{n+1}\right]
_{k=0,r=0}^{n,2\left(  n+1\right)  }$ in a similar way as it was done in the
trigonometric case for constants $\left[  \lambda_{k,r}^{n+1}\right]
_{k=0,r=0}^{n+1,2\left(  n+1\right)  }$ and $\left[  \mu_{k,r}^{n+1}\right]
_{k=0,r=0}^{n,2\left(  n+1\right)  }$, respectively, while applying identity
(\ref{cosh_of_sum}) for constants $\left[  \rho_{n+1,r}^{n+1}\right]
_{r=0}^{2\left(  n+1\right)  }$ we have that%
\begin{align*}
\rho_{n+1,0}^{n+1}=  &  \rho_{n,0}^{n}\rho_{1,0}^{1}+\sigma_{n,0}^{n}%
\sigma_{1,0}^{1},\\
\rho_{n+1,1}^{n+1}=  &  \left(  \rho_{n,0}^{n}\rho_{1,1}^{1}+\sigma_{n,0}%
^{n}\sigma_{1,1}^{1}\right)  \frac{h_{2n,0}^{\alpha}h_{2,1}^{\alpha}%
}{h_{2\left(  n+1\right)  ,1}^{\alpha}}+\left(  \rho_{n,1}^{n}\rho_{1,0}%
^{1}+\sigma_{n,1}^{n}\sigma_{1,0}^{1}\right)  \frac{h_{2n,1}^{\alpha}%
h_{2,0}^{\alpha}}{h_{2\left(  n+1\right)  ,1}^{\alpha}},\\
\rho_{n+1,r}^{n+1}=  &  \left(  \rho_{n,r-2}^{n}\rho_{1,2}^{1}+\sigma
_{n,r-2}^{n}\sigma_{1,2}^{1}\right)  \frac{h_{2n,r-2}^{\alpha}h_{2,2}^{\alpha
}}{h_{2\left(  n+1\right)  ,r}^{\alpha}}+\left(  \rho_{n,r-1}^{n}\rho
_{1,1}^{1}+\sigma_{n,r-1}^{n}\sigma_{1,1}^{1}\right)  \frac{h_{2n,r-1}%
^{\alpha}h_{2,1}^{\alpha}}{h_{2\left(  n+1\right)  ,r}^{\alpha}}\\
&  +\left(  \rho_{n,r}^{n}\rho_{1,0}^{1}+\sigma_{n,r}^{n}\sigma_{1,0}%
^{1}\right)  \frac{h_{2n,r}^{\alpha}h_{2,0}^{\alpha}}{h_{2\left(  n+1\right)
,r}^{\alpha}},~r=2,3,\ldots,2n,\\
\rho_{n+1,2n+1}^{n+1}=  &  \left(  \rho_{n,2n-1}^{n}\rho_{1,2}^{1}%
+\sigma_{n,2n-1}^{n}\sigma_{1,2}^{1}\right)  \frac{h_{2n,2n-1}^{\alpha}%
h_{2,2}^{\alpha}}{h_{2\left(  n+1\right)  ,2n+1}^{\alpha}}+\left(  \rho
_{n,2n}^{n}\rho_{1,1}^{1}+\sigma_{n,2n}^{n}\sigma_{1,1}^{1}\right)
\frac{h_{2n,2n}^{\alpha}h_{2,1}^{\alpha}}{h_{2\left(  n+1\right)
,2n+1}^{\alpha}},\\
\rho_{n+1,2\left(  n+1\right)  }^{n+1}=  &  \rho_{n,2n}^{n}\rho_{1,2}%
^{1}+\sigma_{n,2n}^{n}\sigma_{1,2}^{1}.
\end{align*}
Summarizing all calculations, one can formulate the following theorem.

\begin{theorem}
[Hyperbolic basis transformation]\label{thm:hyperbolic_basis_transformation}%
The matrix form of the linear transformation from the normalized B-basis $\overline
{\mathcal{H}}_{2\left(  n+1\right)  }^{\alpha}$ to the canonical hyperbolic
basis $\mathcal{H}_{2\left(  n+1\right)  }^{\alpha}$ is%
\[
\left[
\begin{array}
[c]{c}%
1\\
\sinh\left(  u\right) \\
\cosh\left(  u\right) \\
\vdots\\
\sinh\left(  \left(  n+1\right)  u\right) \\
\cosh\left(  \left(  n+1\right)  u\right)
\end{array}
\right]  =\left[
\begin{array}
[c]{cccccc}%
1 & 1 & 1 & \cdots & 1 & 1\\
\sigma_{1,0}^{n+1} & \sigma_{1,1}^{n+1} & \sigma_{1,2}^{n+1} & \cdots &
\sigma_{1,2n+1}^{n+1} & \sigma_{1,2\left(  n+1\right)  }^{n+1}\\
\rho_{1,0}^{n+1} & \rho_{1,1}^{n+1} & \rho_{1,2}^{n+1} & \cdots &
\rho_{1,2n+1}^{n+1} & \rho_{1,2\left(  n+1\right)  }^{n+1}\\
\vdots & \vdots & \vdots &  & \vdots & \vdots\\
\sigma_{n+1,0}^{n+1} & \sigma_{n+1,1}^{n+1} & \sigma_{n+1,2}^{n+1} & \cdots &
\sigma_{n+1,2n+1}^{n+1} & \sigma_{n+1,2\left(  n+1\right)  }^{n+1}\\
\rho_{n+1,0}^{n+1} & \rho_{n+1,1}^{n+1} & \rho_{n+1,2}^{n+1} & \cdots &
\rho_{n+1,2n+1}^{n+1} & \rho_{n+1,2\left(  n+1\right)  }^{n+1}%
\end{array}
\right]  \left[
\begin{array}
[c]{c}%
H_{2\left(  n+1\right)  ,0}^{\alpha}\left(  u\right) \\
H_{2\left(  n+1\right)  ,1}^{\alpha}\left(  u\right) \\
H_{2\left(  n+1\right)  ,2}^{\alpha}\left(  u\right) \\
\vdots\\
H_{2\left(  n+1\right)  ,2n+1}^{\alpha}\left(  u\right) \\
H_{2\left(  n+1\right)  ,2\left(  n+1\right)  }^{\alpha}\left(  u\right)
\end{array}
\right]
\]
for all parameters $u\in\left[  0,\alpha\right]  $.
\end{theorem}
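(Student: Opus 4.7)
The plan is to mirror the argument used for Theorem \ref{thm:trigonometric_basis_transformation}, performing induction on $n$ and replacing each trigonometric ingredient by its hyperbolic counterpart, with careful attention to the sign change in (\ref{cosh_of_sum}). The base case $n=0$ is precisely the matrix identity given just before (\ref{hyperbolic_initial_conditions}), whose correctness follows by direct substitution of $\sinh(u)$ and $\cosh(u)$ into the half-angle product form of the B-basis (\ref{Wang_basis}); the values in (\ref{hyperbolic_initial_conditions}) are the unique coefficients because $\overline{\mathcal{H}}_2^{\alpha}$ is linearly independent.

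For the inductive step, I would proceed row by row. For each $k\in\{0,1,\ldots,n\}$ the rows corresponding to $\sinh(ku)$ and $\cosh(ku)$ are already expressed in $\overline{\mathcal{H}}_{2n}^{\alpha}$ by hypothesis; the hyperbolic analogue of the order-elevation formula (\ref{trigonometric_order_elevation}) (obtained by substituting the normalizing constants $h_{2n,i}^{\alpha}$, $h_{2,j}^{\alpha}$, $h_{2(n+1),r}^{\alpha}$ into the product rule for B-basis functions of type (\ref{Wang_basis})) then promotes each such expansion to $\overline{\mathcal{H}}_{2(n+1)}^{\alpha}$, yielding the recursion already recorded in the excerpt for $\sigma_{k,r}^{n+1}$ and $\rho_{k,r}^{n+1}$. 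The only new rows at level $n+1$ are those for $\sinh((n+1)u)$ and $\cosh((n+1)u)$; here I would invoke the hyperbolic addition identities
\[
\sinh((n+1)u)=\sinh(nu)\cosh(u)+\cosh(nu)\sinh(u),\qquad \cosh((n+1)u)=\cosh(nu)\cosh(u)+\sinh(nu)\sinh(u),
\]
multiply together the order-$n$ and order-$1$ B-basis expansions, and collapse each product $H_{2n,i}^{\alpha}(u)H_{2,j}^{\alpha}(u)$ into $\frac{h_{2n,i}^{\alpha}h_{2,j}^{\alpha}}{h_{2(n+1),i+j}^{\alpha}}H_{2(n+1),i+j}^{\alpha}(u)$ by the hyperbolic product rule. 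Collecting coefficients of $H_{2(n+1),r}^{\alpha}(u)$ and using uniqueness (by linear independence of $\overline{\mathcal{H}}_{2(n+1)}^{\alpha}$) yields exactly the displayed formulae for $\rho_{n+1,r}^{n+1}$, and an entirely parallel computation for $\sinh((n+1)u)$ gives the $\sigma_{n+1,r}^{n+1}$.

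The main subtlety, and essentially the only place where the argument departs from the trigonometric case, is the ``$+$'' sign in (\ref{cosh_of_sum}) versus the ``$-$'' sign in (\ref{cos_of_sum}): I would verify that this is the sole sign change, so that the $\rho_{n+1,r}^{n+1}$ recursion differs from $\mu_{n+1,r}^{n+1}$ precisely by replacing each $-\lambda\lambda$ by $+\sigma\sigma$, while $\sigma_{n+1,r}^{n+1}$ has exactly the same structure as $\lambda_{n+1,r}^{n+1}$ up to the substitution of symbols. The hyperbolic product rule $H_{2n,i}^{\alpha}(u)H_{2m,j}^{\alpha}(u)=\frac{h_{2n,i}^{\alpha}h_{2m,j}^{\alpha}}{h_{2(n+m),i+j}^{\alpha}}H_{2(n+m),i+j}^{\alpha}(u)$ itself follows directly from (\ref{Wang_basis}) by collecting $\sinh$ powers, and the resulting combinations are convex by the normality argument used after (\ref{trigonometric_order_elevation}). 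No step poses a real obstacle: the entire proof is an induction whose burden is purely combinatorial bookkeeping, and the claim of the theorem amounts to packaging the resulting coefficients into a single $(2n+3)\times(2n+3)$ matrix.
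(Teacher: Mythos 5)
Your proposal is correct and follows essentially the same route as the paper: establish the first-order case (\ref{hyperbolic_initial_conditions}), lift the expansions of $\sinh(ku)$, $\cosh(ku)$ for $k\le n$ by hyperbolic order elevation, and obtain the two new rows from the hyperbolic addition formulae combined with the product rule for the basis (\ref{Wang_basis}), the sole structural difference from the trigonometric case being the sign in (\ref{cosh_of_sum}). Nothing in your argument departs from the paper's proof in any essential way.
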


\section{Control point based exact description\label{sec:exact_description}}

Using curves of the type (\ref{trigonometric_curve}) or
(\ref{hyperbolic_curve}), following subsections provide control point
configurations for the exact description of higher order (mixed partial)
derivatives of any smooth parametric curve (higher dimensional multivariate
surface) specified by coordinate functions given in traditional parametric
form, i.e., in vector spaces (\ref{truncated_Fourier_vector_space}) or
(\ref{hyperbolic_vector_space}), respectively. The obtained results will also
be extended for the control point based exact description of the rational
counterpart of these curves and multivariate surfaces. Core properties of this
section are formulated by the next lemmas.

\begin{lemma}
[Exact description of trigonometric polynomials]\label{lem:tp}Consider the
trigonometric polynomial%
\begin{equation}
g\left(  u\right)  =\sum_{p\in P}c_{p}\cos\left(  pu+\psi_{p}\right)
+\sum_{q\in Q}s_{q}\sin\left(  qu+\varphi_{q}\right)  ,~u\in\left[
0,\alpha\right]  ,~\alpha\in\left(  0,\pi\right)
\label{trigonometric_polynomial}%
\end{equation}
of order at most $n$, where $P,Q\subset%
%TCIMACRO{\U{2115} }%
%BeginExpansion
\mathbb{N}
%EndExpansion
$ and $c_{p},\psi_{p},s_{q},\varphi_{q}\in%
%TCIMACRO{\U{211d} }%
%BeginExpansion
\mathbb{R}
%EndExpansion
$. Then, we have the equality%
\[
\frac{\text{\emph{d}}^{r}}{\text{\emph{d}}u^{r}}g\left(  u\right)  =\sum
_{i=0}^{2n}d_{i}\left(  r\right)  T_{2n,i}^{\alpha}\left(  u\right)  ,~\forall
u\in\left[  0,\alpha\right]  ,~\forall r\in%
%TCIMACRO{\U{2115} }%
%BeginExpansion
\mathbb{N}
%EndExpansion
,
\]
where trigonometric ordinates $\left[  d_{i}\left(  r\right)  \right]
_{i=0}^{2n}$ are of the form%
\begin{align}
d_{i}\left(  r\right)  =  &  \sum_{p\in P}c_{p}p^{r}\left(  \mu_{p,i}^{n}%
\cos\left(  \psi_{p}+\frac{r\pi}{2}\right)  -\lambda_{p,i}^{n}\sin\left(
\psi_{p}+\frac{r\pi}{2}\right)  \right) \label{trigonometric_ordinates}\\
&  +\sum_{q\in Q}s_{q}q^{r}\left(  \lambda_{q,i}^{n}\cos\left(  \varphi
_{q}+\frac{r\pi}{2}\right)  +\mu_{q,i}^{n}\sin\left(  \varphi_{q}+\frac{r\pi
}{2}\right)  \right)  .\nonumber
\end{align}

\end{lemma}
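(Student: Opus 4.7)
The plan is to reduce the claim to the basis transformation provided by Theorem \ref{thm:trigonometric_basis_transformation} via three elementary analytic manipulations: phase-shift of the derivative, angle addition, and linearity.

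First, I would handle each term of (\ref{trigonometric_polynomial}) separately. Since $\frac{\mathrm{d}^r}{\mathrm{d}u^r}\cos(pu+\psi_p)=p^r\cos\bigl(pu+\psi_p+\tfrac{r\pi}{2}\bigr)$ and $\frac{\mathrm{d}^r}{\mathrm{d}u^r}\sin(qu+\varphi_q)=q^r\sin\bigl(qu+\varphi_q+\tfrac{r\pi}{2}\bigr)$, differentiation $r$ times merely rotates the phase by $r\pi/2$ and introduces the scalar factors $p^r$, $q^r$. Applying the identities (\ref{sin_of_sum}) and (\ref{cos_of_sum}) with $a=pu$ (or $qu$) and $b=\psi_p+\tfrac{r\pi}{2}$ (or $\varphi_q+\tfrac{r\pi}{2}$) then separates the $u$-dependent factors $\cos(pu)$, $\sin(pu)$, $\cos(qu)$, $\sin(qu)$ from $u$-independent scalars.

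Next, because $g$ has order at most $n$, every frequency appearing above lies in $\{0,1,\ldots,n\}$, so each of the functions $\cos(pu)$, $\sin(pu)$, $\cos(qu)$, $\sin(qu)$ is an element of $\mathbb{T}_{2n}^{\alpha}$. Theorem \ref{thm:trigonometric_basis_transformation} (equivalently the defining relations (\ref{sine_form})--(\ref{cosine_form})) then supplies the unique expansions $\cos(ku)=\sum_{i=0}^{2n}\mu_{k,i}^{n}T_{2n,i}^{\alpha}(u)$ and $\sin(ku)=\sum_{i=0}^{2n}\lambda_{k,i}^{n}T_{2n,i}^{\alpha}(u)$ for each $k\in\{0,1,\ldots,n\}$. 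Substituting these into the phase-expanded expressions yields, for each fixed $p\in P$ (respectively $q\in Q$), a single $T_{2n,i}^{\alpha}$-combination whose coefficient at index $i$ is precisely $p^r\bigl(\mu_{p,i}^{n}\cos(\psi_p+\tfrac{r\pi}{2})-\lambda_{p,i}^{n}\sin(\psi_p+\tfrac{r\pi}{2})\bigr)$ (respectively $q^r\bigl(\lambda_{q,i}^{n}\cos(\varphi_q+\tfrac{r\pi}{2})+\mu_{q,i}^{n}\sin(\varphi_q+\tfrac{r\pi}{2})\bigr)$).

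Finally, I would invoke linearity of differentiation and of the basis expansion: multiplying by the scalars $c_p$, $s_q$ and summing over $p\in P$ and $q\in Q$ permutes with the summation over $i$, producing exactly the ordinates $d_i(r)$ stated in (\ref{trigonometric_ordinates}). There is no real obstacle here; the entire argument is a careful bookkeeping exercise once one recognises that the nontrivial work has already been done in building the matrix of Theorem \ref{thm:trigonometric_basis_transformation}. The only point that requires a moment's care is ensuring that the order hypothesis on $g$ guarantees $p,q\le n$, so that the coefficients $\mu_{p,i}^{n}$ and $\lambda_{q,i}^{n}$ are genuinely supplied by that theorem and the representation in $\overline{\mathcal{T}}_{2n}^{\alpha}$ is unique.
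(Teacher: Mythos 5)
Your proposal is correct and follows essentially the same route as the paper's proof: differentiate to obtain the phase shift by $\frac{r\pi}{2}$ with factors $p^{r}$, $q^{r}$, expand via the angle-addition identities (\ref{sin_of_sum}) and (\ref{cos_of_sum}), substitute the order-$n$ expansions of $\cos(ku)$ and $\sin(ku)$ from Theorem \ref{thm:trigonometric_basis_transformation}, and collect the coefficients of $\left\{ T_{2n,i}^{\alpha}\right\}_{i=0}^{2n}$. Your added remark that the order hypothesis guarantees $p,q\leq n$ is a sensible point of care that the paper leaves implicit.
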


\begin{proof}
The $r$th order derivative of the trigonometric polynomial \textbf{(}%
\ref{trigonometric_polynomial}\textbf{)} can be written in the form%
\begin{align*}
\frac{\text{d}^{r}}{\text{d}u^{r}}g\left(  u\right)  =  &  \sum_{p\in P}%
c_{p}p^{r}\cos\left(  pu+\psi_{p}+\frac{r\pi}{2}\right)  +\sum_{q\in Q}%
s_{q}q^{r}\sin\left(  qu+\varphi_{q}+\frac{r\pi}{2}\right) \\
& \\
=  &  \sum_{p\in P}c_{p}p^{r}\left(  \cos\left(  pu\right)  \cos\left(
\psi_{p}+\frac{r\pi}{2}\right)  -\sin\left(  pu\right)  \sin\left(  \psi
_{p}+\frac{r\pi}{2}\right)  \right) \\
&  +\sum_{q\in Q}s_{q}q^{r}\left(  \sin\left(  qu\right)  \cos\left(
\varphi_{q}+\frac{r\pi}{2}\right)  +\cos\left(  qu\right)  \sin\left(
\varphi_{q}+\frac{r\pi}{2}\right)  \right) \\
& \\
=  &  \sum_{p\in P}c_{p}p^{r}\cos\left(  \psi_{p}+\frac{r\pi}{2}\right)
\cos\left(  pu\right)  -\sum_{p\in P}c_{p}p^{r}\sin\left(  \psi_{p}+\frac
{r\pi}{2}\right)  \sin\left(  pu\right) \\
&  +\sum_{q\in Q}s_{q}q^{r}\cos\left(  \varphi_{q}+\frac{r\pi}%
{2}\right)  \sin\left(  qu\right)  +\sum_{q\in Q}s_{q}q^{r}\sin\left(
\varphi_{q}+\frac{r\pi}{2}\right)  \cos\left(  qu\right) \\
& \\
=  &  \sum_{p\in P}c_{p}p^{r}\cos\left(  \psi_{p}+\frac{r\pi}{2}\right)
\left(  \sum_{i=0}^{2n}\mu_{p,i}^{n}T_{2n,i}^{\alpha}\left(  u\right)
\right)  -\sum_{p\in P}c_{p}p^{r}\sin\left(  \psi_{p}+\frac{r\pi}{2}\right)
\left(  \sum_{i=0}^{2n}\lambda_{p,i}^{n}T_{2n,i}^{\alpha}\left(  u\right)
\right) \\
&  +\sum_{q\in Q}s_{q}q^{r}\cos\left(  \varphi_{q}+\frac{r\pi}{2}\right)
\left(  \sum_{i=0}^{2n}\lambda_{q,i}^{n}T_{2n,i}^{\alpha}\left(  u\right)
\right)  +\sum_{q\in Q}s_{q}q^{r}\sin\left(  \varphi_{q}+\frac{r\pi}%
{2}\right)  \left(  \sum_{i=0}^{2n}\mu_{q,i}^{n}T_{2n,i}^{\alpha}\left(
u\right)  \right)
\end{align*}
for all parameters $u\in\left[  0,\alpha\right]  $, where we have applied
Theorem \ref{thm:trigonometric_basis_transformation} for order $n$. Collecting
the coefficients of basis functions $\left\{  T_{2n,i}^{\alpha}\right\}
_{i=0}^{2n}$, one obtains the ordinates specified by
(\ref{trigonometric_ordinates}).
\end{proof}

\begin{lemma}
[Exact description of hyperbolic polynomials]\label{lem:hp}Consider the
hyperbolic function%
\[
g\left(  u\right)  =\sum_{p\in P}c_{p}\cosh\left(  pu+\psi_{p}\right)
+\sum_{q\in Q}s_{q}\sinh\left(  qu+\varphi_{q}\right)  ,~u\in\left[
0,\alpha\right]  ,~\alpha>0
\]
of order at most $n$, where $P,Q\subset%
%TCIMACRO{\U{2115} }%
%BeginExpansion
\mathbb{N}
%EndExpansion
$ and $c_{p},\psi_{p},s_{q},\varphi_{q}\in%
%TCIMACRO{\U{211d} }%
%BeginExpansion
\mathbb{R}
%EndExpansion
$. Then, one has that%
\[
\frac{\text{\emph{d}}^{r}}{\text{\emph{d}}u^{r}}g\left(  u\right)  =\sum
_{i=0}^{2n}d_{i}\left(  r\right)  H_{2n,i}^{\alpha}\left(  u\right)  ,~\forall
u\in\left[  0,\alpha\right]  ,~\forall r\in%
%TCIMACRO{\U{2115} }%
%BeginExpansion
\mathbb{N}
%EndExpansion
,
\]
where%
\begin{equation}
d_{i}\left(  r\right)  =\left\{
\begin{array}
[c]{ll}%
%TCIMACRO{\dsum \limits_{p\in P}}%
%BeginExpansion
{\displaystyle\sum\limits_{p\in P}}
%EndExpansion
c_{p}p^{r}\left(  \rho_{p,i}^{n}\cosh\left(  \psi_{p}\right)  +\sigma
_{p,i}^{n}\sinh\left(  \psi_{p}\right)  \right)  & \\
+%
%TCIMACRO{\dsum \limits_{q\in Q}}%
%BeginExpansion
{\displaystyle\sum\limits_{q\in Q}}
%EndExpansion
s_{q}q^{r}\left(  \sigma_{q,i}^{n}\cosh\left(  \varphi_{q}\right)  +\rho
_{q,i}^{n}\sinh\left(  \varphi_{q}\right)  \right)  , & r=2z,\\
& \\%
%TCIMACRO{\dsum \limits_{p\in P}}%
%BeginExpansion
{\displaystyle\sum\limits_{p\in P}}
%EndExpansion
c_{p}p^{r}\left(  \sigma_{p,i}^{n}\cosh\left(  \psi_{p}\right)  +\rho
_{p,i}^{n}\sinh\left(  \psi_{p}\right)  \right)  & \\
+%
{\displaystyle\sum\limits_{q\in Q}}
s_{q}q^{r}\left(  \rho_{q,i}^{n}\cosh\left(  \varphi_{q}\right)  +\sigma
_{q,i}^{n}\sinh\left(  \varphi_{q}\right)  \right)  , & r=2z+1.
\end{array}
\right.  \label{hyperbolic_ordinates}%
\end{equation}

\end{lemma}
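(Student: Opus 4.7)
The plan is to mirror the proof of Lemma \ref{lem:tp} almost verbatim, with two structural adjustments forced by the hyperbolic setting. First, unlike $\cos$ and $\sin$ which share the unified phase shift $\cos^{(r)}(x)=\cos(x+r\pi/2)$, the derivatives of $\cosh$ and $\sinh$ merely swap between $\cosh$ and $\sinh$ according to the parity of $r$, which is precisely why formula \eqref{hyperbolic_ordinates} is split into the cases $r=2z$ and $r=2z+1$. Second, the addition formulae carry no sign change on any term, since \eqref{cosh_of_sum} together with $\sinh(a+b)=\sinh(a)\cosh(b)+\cosh(a)\sinh(b)$ are both purely additive, so no alternating signs propagate into the collected coefficients.

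Concretely, I would first differentiate $g$ term by term. For $r=2z$ this gives
\[
\frac{d^{r}}{du^{r}}g(u)=\sum_{p\in P}c_{p}p^{r}\cosh(pu+\psi_{p})+\sum_{q\in Q}s_{q}q^{r}\sinh(qu+\varphi_{q}),
\]
whereas for $r=2z+1$ the outer $\cosh$ and $\sinh$ are swapped inside both sums. Next, I would apply the hyperbolic addition formulae with $a=pu$ (resp.\ $a=qu$) and $b=\psi_{p}$ (resp.\ $b=\varphi_{q}$) to rewrite each summand as a linear combination of the four basic functions $\cosh(pu),\sinh(pu),\cosh(qu),\sinh(qu)$, whose scalar prefactors involve only $\cosh$ and $\sinh$ evaluated at the fixed phases $\psi_{p}$ and $\varphi_{q}$.

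Then, invoking Theorem \ref{thm:hyperbolic_basis_transformation} at order $n$, I would substitute
\[
\cosh(ku)=\sum_{i=0}^{2n}\rho_{k,i}^{n}H_{2n,i}^{\alpha}(u),\qquad \sinh(ku)=\sum_{i=0}^{2n}\sigma_{k,i}^{n}H_{2n,i}^{\alpha}(u)
\]
for $k\in\{p,q\}$, and simply collect the coefficient of each $H_{2n,i}^{\alpha}(u)$ separately in the two parity cases. In the even case, the coefficient of $H_{2n,i}^{\alpha}$ from the $P$-sum pairs $\rho_{p,i}^{n}$ with $\cosh(\psi_{p})$ and $\sigma_{p,i}^{n}$ with $\sinh(\psi_{p})$, while the $Q$-sum pairs $\sigma_{q,i}^{n}$ with $\cosh(\varphi_{q})$ and $\rho_{q,i}^{n}$ with $\sinh(\varphi_{q})$; in the odd case, the roles of $\rho$ and $\sigma$ are swapped in both sums. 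This reproduces the two branches of \eqref{hyperbolic_ordinates} directly.

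Because every step reduces to elementary expansion and coefficient matching, there is no analytic obstacle; the work is purely combinatorial. The only delicate point is bookkeeping: one must keep straight which of $\rho_{\cdot,i}^{n}$ or $\sigma_{\cdot,i}^{n}$ multiplies $\cosh$ versus $\sinh$ of the phase, since this pairing is controlled jointly by the parity of $r$ (which decides whether differentiation has swapped the outer $\cosh/\sinh$) and by whether the original term came from $P$ or $Q$. Once this $2\times 2$ case table is recorded, the two formulae in \eqref{hyperbolic_ordinates} fall out by inspection, completing the proof.
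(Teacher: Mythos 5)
Your proposal is correct and follows essentially the same route as the paper: the paper's proof likewise invokes Theorem \ref{thm:hyperbolic_basis_transformation} at order $n$, the parity-dependent derivative formulae for $\cosh(au+b)$ and $\sinh(au+b)$, and the hyperbolic addition identities, and then refers back to the steps of the proof of Lemma \ref{lem:tp}. Your expansion and coefficient matching in the two parity cases reproduces \eqref{hyperbolic_ordinates} exactly as intended.
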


\begin{proof}
Using the basis transformation formulated in Theorem
\ref{thm:hyperbolic_basis_transformation} for order $n$ and applying
derivative formulae%
\begin{align*}
\frac{\text{d}^{r}}{\text{d}u^{r}}\cosh\left(  au+b\right)   &  =\left\{
\begin{array}
[c]{ll}%
a^{r}\cosh\left(  au+b\right)  , & r=2z,\\
& \\
a^{r}\sinh\left(  au+b\right)  , & r=2z+1,
\end{array}
\right. \\
& \\
\frac{\text{d}^{r}}{\text{d}u^{r}}\sinh\left(  au+b\right)   &  =\left\{
\begin{array}
[c]{ll}%
a^{r}\sinh\left(  au+b\right)  , & r=2z,\\
& \\
a^{r}\cosh\left(  au+b\right)  , & r=2z+1
\end{array}
\right.
\end{align*}
with hyperbolic identities (\ref{cosh_of_sum}) and%
\[
\sinh\left(  a+b\right)  =\sinh\left(  a\right)  \cosh\left(  b\right)
+\cosh\left(  a\right)  \sinh\left(  b\right)  ,
\]
one can follow the steps of the proof of the previous Lemma \ref{lem:tp}.
\end{proof}

\subsection{Description of (rational) trigonometric curves and
surfaces\label{sec:exact_description_trigonometric}}

Consider the smooth parametric curve%
\begin{equation}
\mathbf{g}\left(  u\right)  =\left[  g^{\ell}\left(  u\right)  \right]
_{\ell=1}^{\delta},~u\in\left[  0,\alpha\right]  ,~\alpha\in\left(
0,\pi\right)  \label{traditional_trigonometric_curve}%
\end{equation}
with coordinate functions of the form%
\[
g^{\ell}\left(  u\right)  =\sum_{p\in P_{\ell}}c_{p}^{\ell}\cos\left(
pu+\psi_{p}^{\ell}\right)  +\sum_{q\in Q_{\ell}}s_{q}^{\ell}\sin\left(
qu+\varphi_{q}^{\ell}\right)
\]
and the vector space (\ref{truncated_Fourier_vector_space}) of order
\[
n\geq n_{\min}=\max\left\{  z:z\in\cup_{\ell=1}^{\delta}\left(  P_{\ell}\cup
Q_{\ell}\right)  \right\}  ,
\]
where $P_{\ell},Q_{\ell}\subset%
%TCIMACRO{\U{2115} }%
%BeginExpansion
\mathbb{N}
%EndExpansion
$ and $c_{p}^{\ell},\psi_{p}^{\ell},s_{q}^{\ell},\varphi_{q}^{\ell}\in%
%TCIMACRO{\U{211d} }%
%BeginExpansion
\mathbb{R}
%EndExpansion
$.

\begin{theorem}
[Control point based exact description of trigonometric curves]%
\label{thm:cpbed_trigonometric_curves}The $r$th ($r\in%
%TCIMACRO{\U{2115} }%
%BeginExpansion
\mathbb{N}
%EndExpansion
$) order\ derivative of the curve (\ref{traditional_trigonometric_curve}) can
be written in the form%
\[
\frac{\text{\emph{d}}^{r}}{\text{\emph{d}}u^{r}}\mathbf{g}\left(  u\right)
=\sum_{i=0}^{2n}\mathbf{d}_{i}\left(  r\right)  T_{2n,i}^{\alpha}\left(
u\right)  ,~\forall u\in\left[  0,\alpha\right]  ,
\]
where control points $\mathbf{d}_{i}\left(  r\right)  =\left[  d_{i}^{\ell
}\left(  r\right)  \right]  _{\ell=1}^{\delta}$ are determined by coordinates%
\begin{align}
d_{i}^{\ell}\left(  r\right)  =  &  \sum_{p\in P_{\ell}}c_{p}^{\ell}%
p^{r}\left(  \mu_{p,i}^{n}\cos\left(  \psi_{p}^{\ell}+\frac{r\pi}{2}\right)
-\lambda_{p,i}^{n}\sin\left(  \psi_{p}^{\ell}+\frac{r\pi}{2}\right)  \right)
\label{trigonometric_control_points}\\
&  +\sum_{q\in Q_{\ell}}s_{q}^{\ell}q^{r}\left(  \lambda_{q,i}^{n}\cos\left(
\varphi_{q}^{\ell}+\frac{r\pi}{2}\right)  +\mu_{q,i}^{n}\sin\left(
\varphi_{q}^{\ell}+\frac{r\pi}{2}\right)  \right)  .\nonumber
\end{align}
(In case of zeroth order derivatives we will use the simpler notation
$\mathbf{d}_{i}=\left[  d_{i}^{\ell}\right]  _{\ell=1}^{\delta}=\left[
d_{i}^{\ell}\left(  0\right)  \right]  _{\ell=1}^{\delta}=\mathbf{d}%
_{i}\left(  0\right)  $ for all $i=0,1,\ldots,2n$.)
\end{theorem}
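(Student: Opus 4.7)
The plan is to reduce the theorem directly to Lemma \ref{lem:tp} by applying that lemma separately to each scalar coordinate function $g^\ell$, $\ell = 1, 2, \ldots, \delta$, and then assembling the resulting scalar B-ordinates into the vector-valued control points $\mathbf{d}_i(r)$.

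First, I would verify the applicability of Lemma \ref{lem:tp} to every coordinate. By the definition of $n_{\min}$, every frequency index $z \in P_\ell \cup Q_\ell$ satisfies $z \le n_{\min} \le n$; hence each coordinate function $g^\ell$ is itself a trigonometric polynomial of the form (\ref{trigonometric_polynomial}) of order at most $n$. Lemma \ref{lem:tp} thus applies directly and yields
\[
\frac{\text{d}^r}{\text{d}u^r} g^\ell(u) = \sum_{i=0}^{2n} d_i^\ell(r)\, T_{2n,i}^\alpha(u), \quad \forall u \in [0,\alpha],
\]
where each $d_i^\ell(r)$ is obtained from formula (\ref{trigonometric_ordinates}) by specializing the sets $P, Q$ and the scalar constants $c_p, \psi_p, s_q, \varphi_q$ to their $\ell$-indexed counterparts $P_\ell, Q_\ell, c_p^\ell, \psi_p^\ell, s_q^\ell, \varphi_q^\ell$. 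This specialization is literally the formula (\ref{trigonometric_control_points}) that defines the coordinates of $\mathbf{d}_i(r)$.

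Second, since differentiation and summation commute with the formation of the coordinate tuple $\mathbf{g}(u) = [g^\ell(u)]_{\ell=1}^\delta$, stacking the $\delta$ scalar identities into a single vector-valued identity gives
\[
\frac{\text{d}^r}{\text{d}u^r} \mathbf{g}(u) = \left[\frac{\text{d}^r}{\text{d}u^r} g^\ell(u)\right]_{\ell=1}^\delta = \sum_{i=0}^{2n} \left[d_i^\ell(r)\right]_{\ell=1}^\delta T_{2n,i}^\alpha(u) = \sum_{i=0}^{2n} \mathbf{d}_i(r)\, T_{2n,i}^\alpha(u),
\]
which is the asserted identity.

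There is essentially no obstacle here: the statement is a vector-valued rephrasing of Lemma \ref{lem:tp}, and the entire argument amounts to bookkeeping in the coordinate index $\ell$. The only point worth a brief remark is that the chosen $n$ may strictly exceed the true order of a particular polynomial $g^\ell$; however, Lemma \ref{lem:tp} is stated with respect to any $n$ that bounds the frequencies, and the coefficients $\lambda_{k,i}^n, \mu_{k,i}^n$ for $k \le n$ are well-defined by the recursion underlying Theorem \ref{thm:trigonometric_basis_transformation} (i.e., by order elevation from the order that actually occurs in $g^\ell$), so no additional argument is required.
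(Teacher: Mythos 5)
Your proposal is correct and follows essentially the same route as the paper's own proof: apply Lemma \ref{lem:tp} to each coordinate function $g^{\ell}$ (which is a trigonometric polynomial of order at most $n$ by the choice of $n\geq n_{\min}$), obtain the ordinates (\ref{trigonometric_control_points}) as the $\ell$-indexed specialization of (\ref{trigonometric_ordinates}), and assemble the $\delta$ scalar identities into the vector-valued one. Your closing remark on the well-definedness of $\lambda_{k,i}^{n}$ and $\mu_{k,i}^{n}$ for $k\leq n$ via order elevation is a harmless addition that the paper leaves implicit.
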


\begin{proof}
Using Lemma \ref{lem:tp}, the $r$th order derivative of the $\ell$th
coordinate function of the curve (\ref{traditional_trigonometric_curve}) can
be written in the form%
\[
\frac{\text{d}^{r}}{\text{d}u^{r}}g^{\ell}\left(  u\right)  =\sum_{i=0}%
^{2n}d_{i}^{\ell}\left(  r\right)  T_{2n,i}^{\alpha}\left(  u\right)
,~\forall u\in\left[  0,\alpha\right]  ,
\]
where the $i$th ordinate has exactly the form of
(\ref{trigonometric_control_points}). Repeating this reformulation for all
$\ell=1,2,\ldots,\delta$ and collecting the coefficients of basis functions
$\left\{  T_{2n,i}^{\alpha}\right\}  _{i=0}^{2n}$ one obtains all coordinates
of control points $\mathbf{d}_{i}\left(  r\right)  =\left[  d_{i}^{\ell
}\left(  r\right)  \right]  _{\ell=1}^{\delta}$ that can be substituted into
the description of trigonometric curves of the type (\ref{trigonometric_curve}).
\end{proof}

\begin{example}
[Application of Theorem \ref{thm:cpbed_trigonometric_curves} -- plane
curves]Cases (a) and (b) of Fig. \ref{fig:ed_non_rational_tpc} show the
control point based exact descriptions of the hypocycloidal arc%
\begin{equation}
\mathbf{g}\left(  u\right)  =\left[
\begin{array}
[c]{c}%
g^{1}\left(  u\right) \\
\\
g^{2}\left(  u\right)
\end{array}
\right]  =\left[
\begin{array}
[c]{l}%
4\cos\left(  u-\frac{\pi}{3}\right)  +\cos\left(  4u-\frac{\pi}{3}\right) \\
\\
4\sin\left(  u-\frac{\pi}{3}\right)  -\sin\left(  4u-\frac{\pi}{3}\right)
\end{array}
\right]  ,~u\in\left(  0,\frac{3\pi}{4}\right)  \label{hypocycloid}%
\end{equation}
and of the arc
\begin{equation}
\mathbf{g}\left(  u\right)  =\left[
\begin{array}
[c]{c}%
g^{1}\left(  u\right) \\
\\
g^{2}\left(  u\right)
\end{array}
\right]  =\frac{1}{2}\left[
\begin{array}
[c]{l}%
\sin\left(  u-\frac{\pi}{12}\right)  +\sin\left(  3u-\frac{\pi}{4}\right) \\
\\
\cos\left(  u-\frac{\pi}{12}\right)  -\cos\left(  3u-\frac{\pi}{4}\right)
\end{array}
\right]  ,~u\in\left(  0,\frac{2\pi}{3}\right)  \label{quadrifolium}%
\end{equation}
of a quadrifolium, respectively.
\end{example}

%

%TCIMACRO{\FRAME{fhFU}{6.058in}{3.4636in}{0pt}{\Qcb{Order elevated control
%point based exact description of trigonometric curves (\ref{hypocycloid}) and
%(\ref{quadrifolium}) by means of Theorem \ref{thm:cpbed_trigonometric_curves}.
%(\QTR{em}{a}) A hypocycloidal arc ($\alpha=\frac{3\pi}{4}$). (\QTR{em}{b}) An
%arc of a quadrifolium ($\alpha=\frac{2\pi}{3}$).}}%
%{\Qlb{fig:ed_non_rational_tpc}}{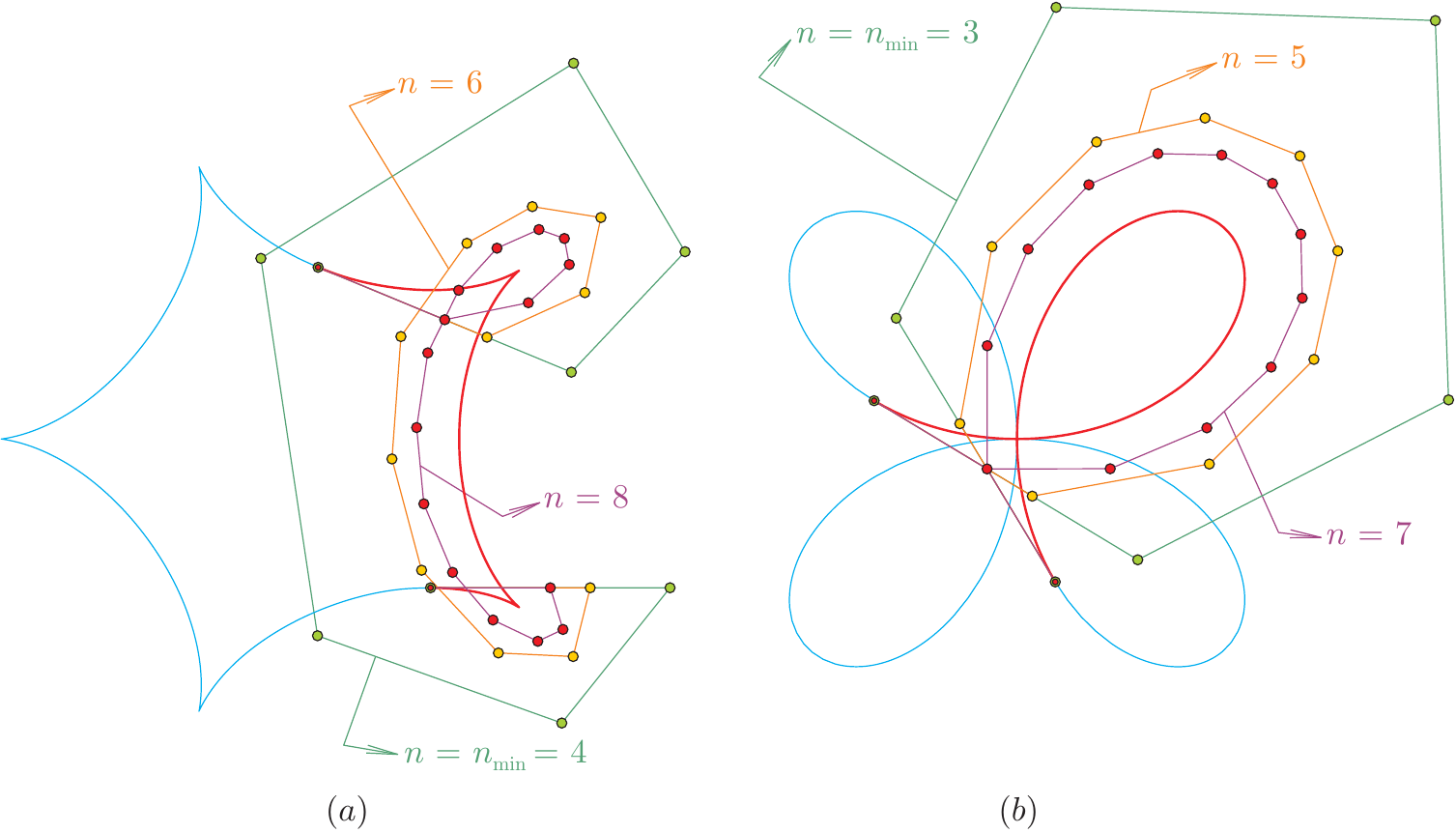}%
%{\special{ language "Scientific Word";  type "GRAPHIC";
%maintain-aspect-ratio TRUE;  display "USEDEF";  valid_file "F";
%width 6.058in;  height 3.4636in;  depth 0pt;  original-width 6.0303in;
%original-height 3.4359in;  cropleft "0";  croptop "1";  cropright "1";
%cropbottom "0";
%filename 'ed_non_rational_tpc.pdf';file-properties "XNPEU";}} }%
%BeginExpansion
\begin{figure}
[!h]
\begin{center}
\includegraphics[
height=3.4636in,
width=6.058in
]%
{}%
\caption{Order elevated control point based exact description of trigonometric
curves (\ref{hypocycloid}) and (\ref{quadrifolium}) by means of Theorem
\ref{thm:cpbed_trigonometric_curves}. (\emph{a}) A hypocycloidal arc
($\alpha=\frac{3\pi}{4}$). (\emph{b}) An arc of a quadrifolium ($\alpha
=\frac{2\pi}{3}$).}%
\label{fig:ed_non_rational_tpc}%
\end{center}
\end{figure}
%EndExpansion

\begin{example}
[Application of Theorem \ref{thm:cpbed_trigonometric_curves} -- space
curve]Fig. \ref{fig:torus_knot} illustrates the control point based exact
descriptions of the arc%
\begin{equation}
\mathbf{g}\left(  u\right)  =\left[
\begin{array}
[c]{c}%
g^{1}\left(  u\right) \\
\\
g^{2}\left(  u\right) \\
\\
g^{3}\left(  u\right)
\end{array}
\right]  =\left[
\begin{array}
[c]{l}%
\frac{1}{2}\cos\left(  u\right)  +2\cos\left(  3u\right)  +\frac{1}{2}%
\cos\left(  5u\right) \\
\\
\frac{1}{2}\sin\left(  u\right)  +2\sin\left(  3u\right)  +\frac{1}{2}%
\sin\left(  5u\right) \\
\\
\sin\left(  2u\right)
\end{array}
\right]  ,~u\in\left(  0,\frac{\pi}{2}\right)  , \label{torus_knot}%
\end{equation}
of a torus knot.
\end{example}

%

%TCIMACRO{\FRAME{fhFU}{4.9882in}{3.8553in}{0pt}{\Qcb{Order elevated control
%point based exact description of an arc ($\alpha=\frac{\pi}{2}$) of a knot
%that lies on the surface of the torus $\left[  \left(  R+r\sin\left(
%u_{1}\right)  \right)  \cos\left(  u_{2}\right)  ,\left(  R+r\sin\left(
%u_{1}\right)  \right)  \sin\left(  u_{2}\right)  ,r\cos\left(  u_{1}\right)
%\right]  $, where $\left(  u_{1},u_{2}\right)  \in\left[  0,2\pi\right]
%\times\left[  0,2\pi\right]  $, $R=2$ and $r=1$. Control points were obtained
%by using Theorem \ref{thm:cpbed_trigonometric_curves}.}}{\Qlb{fig:torus_knot}%
%}{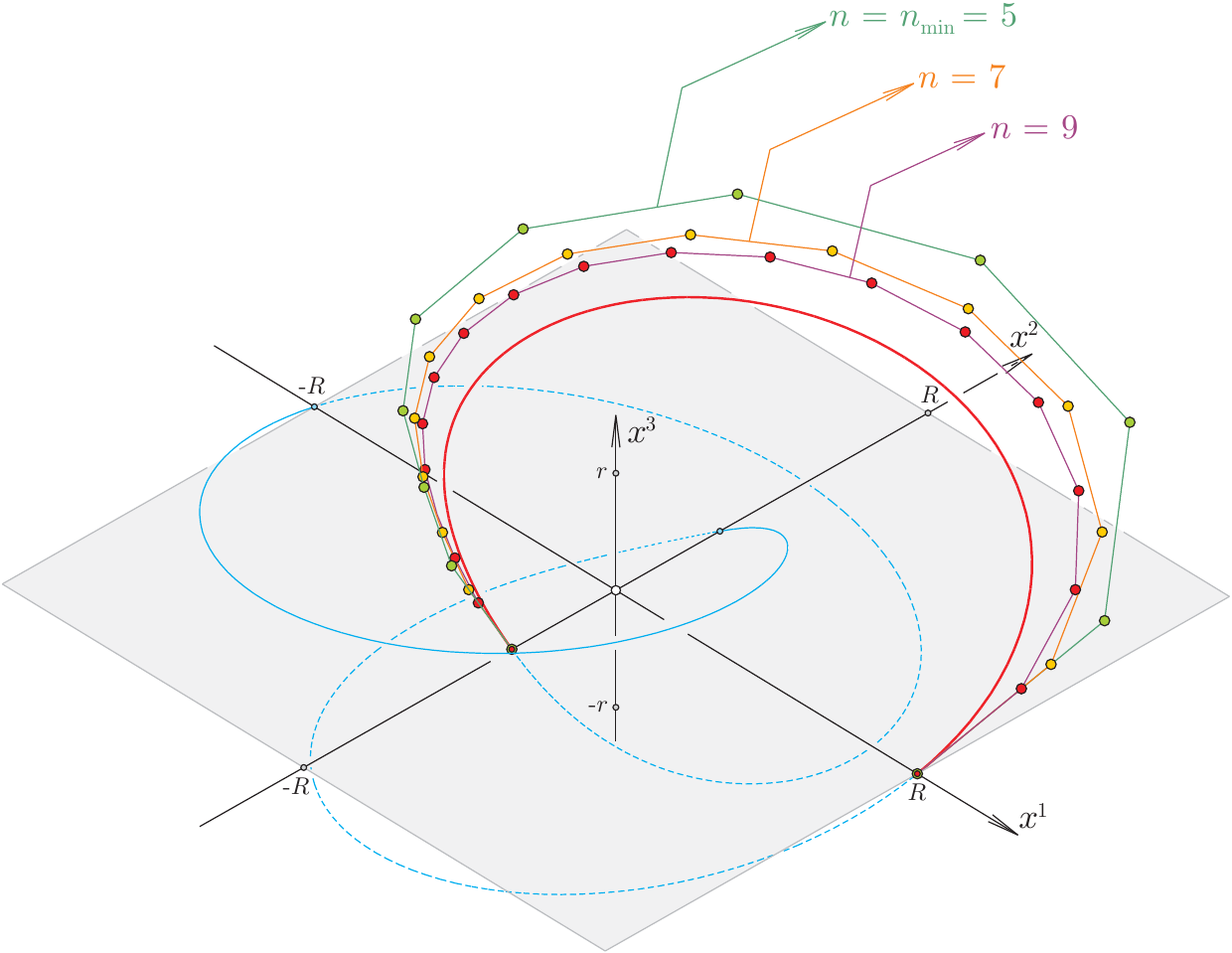}%
%{\special{ language "Scientific Word";  type "GRAPHIC";
%maintain-aspect-ratio TRUE;  display "USEDEF";  valid_file "F";
%width 4.9882in;  height 3.8553in;  depth 0pt;  original-width 4.9597in;
%original-height 3.8277in;  cropleft "0";  croptop "1";  cropright "1";
%cropbottom "0";
%filename 'torus_knot_n_5_7_9_alpha_pi_per_2_pch_0.pdf';file-properties "XNPEU";}%
%} }%
%BeginExpansion
\begin{figure}
[!h]
\begin{center}
\includegraphics[
height=3.8553in,
width=4.9882in
]%
{}%
\caption{Order elevated control point based exact description of an arc
($\alpha=\frac{\pi}{2}$) of a knot that lies on the surface of the torus
$\left[  \left(  R+r\sin\left(  u_{1}\right)  \right)  \cos\left(
u_{2}\right)  ,\left(  R+r\sin\left(  u_{1}\right)  \right)  \sin\left(
u_{2}\right)  ,r\cos\left(  u_{1}\right)  \right]  $, where $\left(  u_{1}%
,u_{2}\right)  \in\left[  0,2\pi\right]  \times\left[  0,2\pi\right]  $, $R=2$
and $r=1$. Control points were obtained by using Theorem
\ref{thm:cpbed_trigonometric_curves}.}%
\label{fig:torus_knot}%
\end{center}
\end{figure}
%EndExpansion

Consider now the rational trigonometric curve%
\begin{equation}
\mathbf{g}\left(  u\right)  =\frac{1}{g^{\delta+1}\left(  u\right)  }\left[
g^{\ell}\left(  u\right)  \right]  _{\ell=1}^{\delta},~u\in\left[
0,\alpha\right]  , \label{rational_trigonometric_curve_to_be_described}%
\end{equation}
given in traditional parametric form
\[
g^{\ell}\left(  u\right)  =\sum_{p\in P_{\ell}}c_{p}^{\ell}\cos\left(
pu+\psi_{p}^{\ell}\right)  +\sum_{q\in Q_{\ell}}s_{q}^{\ell}\sin\left(
qu+\varphi_{q}^{\ell}\right)  ,~P_{\ell},Q_{\ell}\subset%
%TCIMACRO{\U{2115} }%
%BeginExpansion
\mathbb{N}
%EndExpansion
,~c_{p}^{\ell},\psi_{p}^{\ell},s_{q}^{\ell},\varphi_{q}^{\ell}\in%
%TCIMACRO{\U{211d} }%
%BeginExpansion
\mathbb{R}
%EndExpansion
,~\ell=1,2,\ldots,\delta+1,
\]
where%
\[
g^{\delta+1}\left(  u\right)  >0,~\forall u\in\left[  0,\alpha\right]  .
\]

\begin{algorithm}
[Control point based exact description of rational trigonometric
curves]\label{alg:cpbed_rational_trigonometric_curves}The process that
provides the control point based exact description of the rational curve
(\ref{rational_trigonometric_curve_to_be_described}) consists of the following operations:

\begin{itemize}
\item let%
\[
n\geq n_{\min}=\max\left\{  z:z\in\cup_{\ell=1}^{\delta+1}\left(  P_{\ell}\cup
Q_{\ell}\right)  \right\}
\]
be an arbitrarily fixed order;

\item apply Theorem \ref{thm:cpbed_trigonometric_curves} to the pre-image
\begin{equation}
\mathbf{g}_{\wp}\left(  u\right)  =\left[  g^{\ell}\left(  u\right)  \right]
_{\ell=1}^{\delta+1},~u\in\left[  0,\alpha\right]
\label{pre_image_of_rational_trigonometric_curve_to_be_described}%
\end{equation}
of the curve (\ref{rational_trigonometric_curve_to_be_described}), i.e.,
compute control points
\[
\mathbf{d}_{i}^{\wp}=\left[  d_{i}^{\ell}\right]  _{\ell=1}^{\delta+1}\in%
%TCIMACRO{\U{211d} }%
%BeginExpansion
\mathbb{R}
%EndExpansion
^{\delta+1},~i=0,1,\ldots,2n
\]
for the exact trigonometric representation of
(\ref{pre_image_of_rational_trigonometric_curve_to_be_described}) in the
pre-image space $%
%TCIMACRO{\U{211d} }%
%BeginExpansion
\mathbb{R}
%EndExpansion
^{\delta+1}$;

\item project the obtained control points onto the hyperplane $x^{\delta+1}=1$
that results the control points
\[
\mathbf{d}_{i}=\frac{1}{d_{i}^{\delta+1}}\left[  d_{i}^{\ell}\right]  _{\ell
=1}^{\delta}\in%
%TCIMACRO{\U{211d} }%
%BeginExpansion
\mathbb{R}
%EndExpansion
^{\delta},~i=0,1,\ldots,2n
\]
and weights%
\[
\omega_{i}=d_{i}^{\delta+1},~i=0,1,\ldots,2n
\]
needed for the rational trigonometric representation
(\ref{rational_trigonometric_curve}) of
(\ref{rational_trigonometric_curve_to_be_described});

\item the above generation process does not necessarily ensure the positivity
of all weights, since the last coordinate of some control points in the
pre-image space $%
%TCIMACRO{\U{211d} }%
%BeginExpansion
\mathbb{R}
%EndExpansion
^{\delta+1}$ can be negative; if this is the case, one can increase in $%
%TCIMACRO{\U{211d} }%
%BeginExpansion
\mathbb{R}
%EndExpansion
^{\delta+1}$ the order of the trigonometric curve used for the control point
based exact description of the pre-image $\mathbf{g}_{\wp}$, since -- as
stated in Remark \ref{rem:trigonometric_order_elevation} -- order elevation
generates a sequence of control polygons that converges to $\mathbf{g}_{\wp}$
which is a geometric object of one branch that does not intersect the
vanishing plane $x^{\delta+1}=0\,$(i.e., the $\left(  \delta+1\right)  $th
coordinate of all its points are of the same sign); therefore, it is
guaranteed that exists a finite and minimal order $n+z$ ($z\geq1$) for which
all weights are positive.
\end{itemize}
\end{algorithm}

\begin{example}
[Application of Algorithm \ref{alg:cpbed_rational_trigonometric_curves} --
rational curves]Fig. \ref{fig:bernoulli_s_lemniscate} shows the control point
based description of an arc of the rational trigonometric curve%
\begin{equation}
\mathbf{g}\left(  u\right)  =\frac{1}{g^{3}\left(  u\right)  }\left[
\begin{array}
[c]{c}%
g^{1}\left(  u\right) \\
\\
g^{2}\left(  u\right)
\end{array}
\right]  =\frac{1}{\frac{3}{2}-\frac{1}{2}\cos\left(  2u\right)  }\left[
\begin{array}
[c]{r}%
\cos\left(  u\right) \\
\\
\frac{1}{2}\sin\left(  2u\right)
\end{array}
\right]  ,~u\in\left[  0,\frac{2\pi}{3}\right]  ,
\label{Bernoulli_s_lemniscate}%
\end{equation}
also known as Bernoulli's lemniscate.
\end{example}

\begin{figure}
[!h]
\begin{center}
\includegraphics[
height=3.7239in,
width=6.346in
]%
{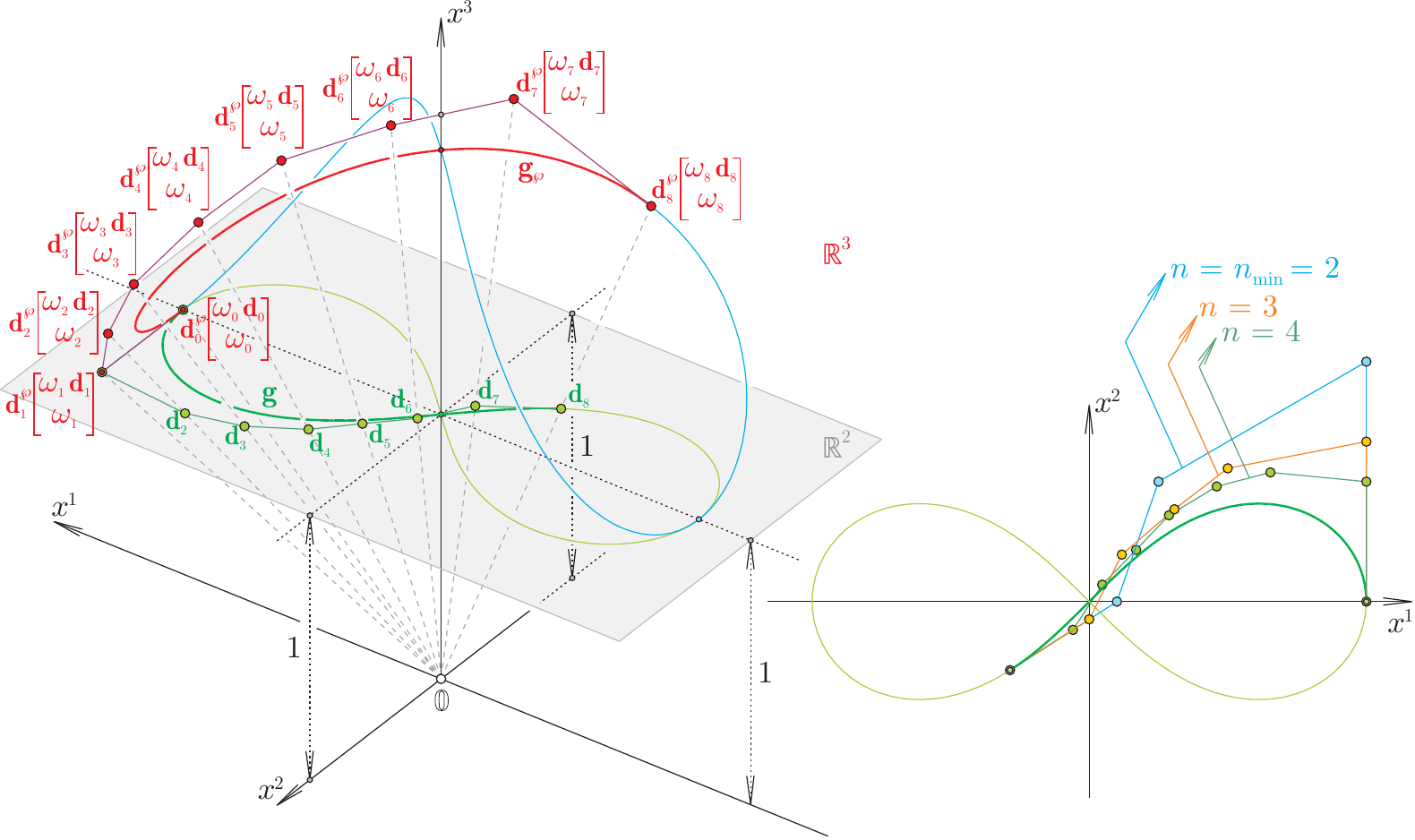}%
\caption{Using Algorithm \ref{alg:cpbed_rational_trigonometric_curves} with
shape parameter $\alpha=\frac{2\pi}{3}$, the left image shows the control
point based exact description of an arc of Bernoulli's lemniscate
(\ref{Bernoulli_s_lemniscate}) by means of a fourth order rational
trigonometric curve $\mathbf{g}$ (green) and its pre-image $\mathbf{g}_{\wp}$
(red). Control points $\mathbf{d}_{i}^{\wp}=\left[
\protect\begin{array}
[c]{cc}%
\omega_{i}\mathbf{d}_{i} & \omega_{i}%
\protect\end{array}
\right]  \in\mathbb{R} ^{3}$ ($i=0,1,\ldots,8$) of the pre-image
$\mathbf{g}_{\wp}=\left[  g^{\ell}\left(  u\right)  \right]  _{\ell=1}^{3}$
were obtained by the application of Theorem
\ref{thm:cpbed_trigonometric_curves}, while control points $\left[
\mathbf{d}_{i}\right]  _{i=0}^{8}$ and weights $\left[  \omega_{i}\right]
_{i=0}^{8}$ needed for the rational representation
(\ref{rational_trigonometric_curve}) were obtained by the central projection
of points $\left[  \mathbf{d}_{i}^{\wp}\right]  _{i=0}^{8}$ onto the
hyperplane $x^{3}=1$. The right image also illustrates the control polygons of
the second and third order exact rational trigonometric representations of the
same arc. (For interpretation of the references to color in this figure
legend, the reader is referred to the web version of this paper.)}%
\label{fig:bernoulli_s_lemniscate}%
\end{center}
\end{figure}
%EndExpansion

Theorem \ref{thm:cpbed_trigonometric_curves} can also be used to provide
control nets (grids) for the control point based exact description of the
higher order mixed partial derivatives of a general class of multivariate surfaces the
elements of which can be expressed in the form%
\begin{equation}
\mathbf{s}\left(  \mathbf{u}\right)  =\left[
\begin{array}
[c]{cccc}%
s^{1}\left(  \mathbf{u}\right)  & s^{2}\left(  \mathbf{u}\right)  & \cdots &
s^{\delta+\kappa}\left(  \mathbf{u}\right)
\end{array}
\right]  \in%
%TCIMACRO{\U{211d} }%
%BeginExpansion
\mathbb{R}
%EndExpansion
^{\delta+\kappa},~\mathbf{u}=\left[  u_{j}\right]  _{j=1}^{\delta}\in
\times_{j=1}^{\delta}\left[  0,\alpha_{j}\right]  ,~\alpha_{j}\in\left(
0,\pi\right)  ,~\kappa\geq0 \label{trigonometric_surface_to_be_described}%
\end{equation}
where%
\[
s^{\ell}\left(  \mathbf{u}\right)  =\sum_{\zeta=1}^{m_{\ell}}\prod
_{j=1}^{\delta}\left(  \sum_{p\in P_{\ell,\zeta,j}}c_{p}^{\ell,\zeta,j}%
\cos\left(  pu_{j}+\psi_{p}^{\ell,\zeta,j}\right)  +\sum_{q\in Q_{\ell
,\zeta,j}}s_{q}^{\ell,\zeta,j}\sin\left(  qu_{j}+\varphi_{q}^{\ell,\zeta
,j}\right)  \right)  ,~\ell=1,2,\ldots,\delta+\kappa
\]
and%
\[
P_{\ell,\zeta,j},Q_{\ell,\zeta,j}\subset%
%TCIMACRO{\U{2115} }%
%BeginExpansion
\mathbb{N}
%EndExpansion
,~m_{\ell}\in%
%TCIMACRO{\U{2115} }%
%BeginExpansion
\mathbb{N}
%EndExpansion
\setminus\left\{  0\right\}  ,~c_{p}^{\ell,\zeta,j},\psi_{p}^{\ell,\zeta
,j},s_{q}^{\ell,\zeta,j},\varphi_{q}^{\ell,\zeta,j}\in%
%TCIMACRO{\U{211d} }%
%BeginExpansion
\mathbb{R}
%EndExpansion
.
\]
Indeed, we have the next theorem.

\begin{theorem}
[Control point based exact description of trigonometric surfaces]%
\label{thm:cpbed_trigonometric_surfaces}The control point based exact
description of the $\left(  r_{1}+r_{2}+\ldots+r_{\delta}\right)  $th order
mixed partial derivative of the surface
(\ref{trigonometric_surface_to_be_described}) fulfills the equality%
\begin{equation}
\dfrac{\partial^{r_{1}+r_{2}+\ldots+r_{\delta}}}{\partial u_{1}^{r_{1}%
}\partial u_{2}^{r_{2}}\cdots\partial u_{\delta}^{r_{\delta}}}\mathbf{s}\left(
\mathbf{u}\right)  =%
%TCIMACRO{\dsum \limits_{i_{1}=0}^{2n_{1}}}%
%BeginExpansion
{\displaystyle\sum\limits_{i_{1}=0}^{2n_{1}}}
%EndExpansion%
%TCIMACRO{\dsum \limits_{i_{2}=0}^{2n_{2}}}%
%BeginExpansion
{\displaystyle\sum\limits_{i_{2}=0}^{2n_{2}}}
%EndExpansion
\cdots%
%TCIMACRO{\dsum \limits_{i_{\delta}=0}^{2n_{\delta}}}%
%BeginExpansion
{\displaystyle\sum\limits_{i_{\delta}=0}^{2n_{\delta}}}
%EndExpansion
\mathbf{d}_{i_{1},i_{2},\ldots,i_{\delta}}\left(  r_{1},r_{2},\ldots
,r_{\delta}\right)
%TCIMACRO{\dprod \limits_{j=1}^{\delta}}%
%BeginExpansion
{\displaystyle\prod\limits_{j=1}^{\delta}}
%EndExpansion
T_{2n_{j},i_{j}}^{\alpha_{j}}\left(  u_{j}\right)
\label{exact_trigonometric_surface_represantation}%
\end{equation}
for all parameter vectors $\mathbf{u\in}\times_{j=1}^{\delta}\left[
0,\alpha_{j}\right]  $, where%
\begin{align*}
n_{j}  &  \geq n_{\min}^{j}=\max\left\{  z_{j}:z_{j}\in\cup_{\ell=1}%
^{\delta+k}\cup_{\zeta=1}^{m_{\ell}}\left(  P_{\ell,\zeta,j}\cup Q_{\ell
,\zeta,j}\right)  \right\}  ,~j=1,2,\ldots,\delta,\\
\mathbf{d}_{i_{1},i_{2},\ldots,i_{\delta}}\left(  r_{1},r_{2},\ldots
,r_{\delta}\right)   &  =\left[  d_{i_{1},i_{2},\ldots,i_{\delta}}^{\ell
}\left(  r_{1},r_{2},\ldots,r_{\delta}\right)  \right]  _{\ell=1}%
^{\delta+\kappa}\in%
%TCIMACRO{\U{211d} }%
%BeginExpansion
\mathbb{R}
%EndExpansion
^{\delta+\kappa},\\
d_{i_{1},i_{2},\ldots,i_{\delta}}^{\ell}\left(  r_{1},r_{2},\ldots,r_{\delta
}\right)   &  =\sum_{\zeta=1}^{m_{\ell}}\prod_{j=1}^{\delta}d_{i_{j}}%
^{\ell,\zeta}\left(  r_{j}\right)  ,~\ell=1,2,\ldots,\delta+\kappa
\end{align*}
and%
\begin{align*}
d_{i_{j}}^{\ell,\zeta}\left(  r_{j}\right)  =  &  \sum_{p\in P_{\ell,\zeta,j}%
}c_{p}^{\ell,\zeta,j}p^{r_{j}}\left(  \mu_{p,i_{j}}^{n_{j}}\cos\left(
\psi_{p}^{\ell,\zeta,j}+\frac{r_{j}\pi}{2}\right)  -\lambda_{p,i_{j}}^{n_{j}%
}\sin\left(  \psi_{p}^{\ell,\zeta,j}+\frac{r_{j}\pi}{2}\right)  \right) \\
&  +\sum_{q\in Q_{\ell,\zeta,j}}s_{q}^{\ell,\zeta,j}q^{r_{j}}\left(
\lambda_{q,i_{j}}^{n_{j}}\cos\left(  \varphi_{q}^{\ell,\zeta,j}+\frac{r_{j}%
\pi}{2}\right)  +\mu_{q,i_{j}}^{n_{j}}\sin\left(  \varphi_{q}^{\ell,\zeta
,j}+\frac{r_{j}\pi}{2}\right)  \right)  ,\\
\ell &  =1,2,\ldots,\delta+\kappa,~\zeta=1,2,\ldots,m_{\ell},~j=1,2,\ldots
,\delta.
\end{align*}
(In case of zeroth order partial derivatives we will use the simpler notation
\[
\mathbf{d}_{i_{1},i_{2},\ldots,i_{\delta}}=\left[  d_{i_{1},i_{2}%
,\ldots,i_{\delta}}^{\ell}\right]  _{\ell=1}^{\delta+\kappa}=\left[
d_{i_{1},i_{2},\ldots,i_{\delta}}^{\ell}\left(  0,0,\ldots,0\right)  \right]
_{\ell=1}^{\delta+\kappa}=\mathbf{d}_{i_{1},i_{2},\ldots,i_{\delta}}\left(
0,0,\ldots,0\right)
\]
for all $i_{j}=0,1,\ldots,2n_{j}$ and $j=1,2,\ldots,\delta$.)
\end{theorem}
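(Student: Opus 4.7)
The plan is to reduce the multivariate statement to repeated application of the univariate Lemma~\ref{lem:tp} (equivalently, to Theorem~\ref{thm:cpbed_trigonometric_curves}) coordinate by coordinate, exploiting the separated sum-of-products structure of each $s^{\ell}$. First I would fix $\ell\in\{1,2,\ldots,\delta+\kappa\}$, $\zeta\in\{1,2,\ldots,m_{\ell}\}$ and observe that the inner product
\[
\prod_{j=1}^{\delta}\left(\sum_{p\in P_{\ell,\zeta,j}}c_{p}^{\ell,\zeta,j}\cos\left(pu_{j}+\psi_{p}^{\ell,\zeta,j}\right)+\sum_{q\in Q_{\ell,\zeta,j}}s_{q}^{\ell,\zeta,j}\sin\left(qu_{j}+\varphi_{q}^{\ell,\zeta,j}\right)\right)
\]
is a tensor product of univariate trigonometric polynomials: the $j$-th factor depends only on $u_{j}$. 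Consequently the mixed partial $\partial^{r_{1}+r_{2}+\cdots+r_{\delta}}/(\partial u_{1}^{r_{1}}\cdots\partial u_{\delta}^{r_{\delta}})$ distributes over the product and acts on the $j$-th factor as the ordinary $r_{j}$-th derivative in $u_{j}$.

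Next I would apply Lemma~\ref{lem:tp} to each univariate factor separately, with order $n_{j}\geq n_{\min}^{j}$ in the $j$-th variable. This yields
\[
\frac{\text{d}^{r_{j}}}{\text{d}u_{j}^{r_{j}}}\left(\sum_{p\in P_{\ell,\zeta,j}}c_{p}^{\ell,\zeta,j}\cos\left(pu_{j}+\psi_{p}^{\ell,\zeta,j}\right)+\sum_{q\in Q_{\ell,\zeta,j}}s_{q}^{\ell,\zeta,j}\sin\left(qu_{j}+\varphi_{q}^{\ell,\zeta,j}\right)\right)=\sum_{i_{j}=0}^{2n_{j}}d_{i_{j}}^{\ell,\zeta}\left(r_{j}\right)T_{2n_{j},i_{j}}^{\alpha_{j}}\left(u_{j}\right),
\]
with $d_{i_{j}}^{\ell,\zeta}(r_{j})$ given by the expression displayed in the statement (this is just the formula~(\ref{trigonometric_ordinates}) from Lemma~\ref{lem:tp} applied to the single-variable polynomial indexed by $\ell$, $\zeta$, $j$).

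Then I would multiply the $\delta$ univariate expansions: expanding the product of sums into a single multi-index sum over $(i_{1},\ldots,i_{\delta})$ and gathering the tensor product of B-basis functions $\prod_{j=1}^{\delta}T_{2n_{j},i_{j}}^{\alpha_{j}}(u_{j})$ yields the coefficient $\prod_{j=1}^{\delta}d_{i_{j}}^{\ell,\zeta}(r_{j})$ for each multi-index. Summing over $\zeta\in\{1,\ldots,m_{\ell}\}$ produces the $\ell$-th scalar coordinate
\[
d_{i_{1},i_{2},\ldots,i_{\delta}}^{\ell}\left(r_{1},r_{2},\ldots,r_{\delta}\right)=\sum_{\zeta=1}^{m_{\ell}}\prod_{j=1}^{\delta}d_{i_{j}}^{\ell,\zeta}\left(r_{j}\right).
\]
Finally, repeating this argument for every $\ell=1,2,\ldots,\delta+\kappa$ and stacking the scalar coordinates into the vector $\mathbf{d}_{i_{1},i_{2},\ldots,i_{\delta}}(r_{1},r_{2},\ldots,r_{\delta})\in\mathbb{R}^{\delta+\kappa}$ delivers the claimed representation~(\ref{exact_trigonometric_surface_represantation}), valid for all $\mathbf{u}\in\times_{j=1}^{\delta}[0,\alpha_{j}]$.

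The main difficulty is purely notational bookkeeping: we must keep track of four nested indices $(\ell,\zeta,j,i_{j})$ while simultaneously expanding a product of $\delta$ finite sums into one multi-index sum and invoking Lemma~\ref{lem:tp} on each factor with its own order $n_{j}$. There is no genuine analytic obstacle, because the tensor-product structure of the surface exactly matches the tensor-product structure of the multivariate B-basis, so the construction is literally a componentwise application of the already established univariate theorem.
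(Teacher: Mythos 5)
Your proposal is correct and follows essentially the same route as the paper's own proof: both exploit the separated sum-of-products structure of each coordinate function, let the mixed partial act factorwise as an ordinary $r_j$-th derivative, invoke Lemma \ref{lem:tp} on each univariate factor, and then expand the product of the resulting B-basis expansions into the multi-index sum. Nothing is missing.
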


\begin{proof}
Observe, by using Lemma \ref{lem:tp}, that equality%
\begin{align*}
&  \frac{\partial^{r_{1}+r_{2}+\ldots+r_{\delta}}}{\partial u_{1}^{r_{1}%
}\partial u_{2}^{r_{2}}\cdots\partial u_{\delta}^{r_{\delta}}}s^{\ell}\left(
\mathbf{u}\right)= \\
=  &  \frac{\partial^{r_{1}+r_{2}+\ldots+r_{\delta}}}{\partial u_{1}^{r_{1}%
}\partial u_{2}^{r_{2}}\cdots\partial u_{\delta}^{r_{\delta}}}\sum_{\zeta=1}^{m_{\ell}%
}\prod_{j=1}^{\delta}\left(  \sum_{p\in P_{\ell,\zeta,j}}c_{p}^{\ell,\zeta
,j}\cos\left(  pu_{j}+\psi_{p}^{\ell,\zeta,j}\right)  +\sum_{q\in
Q_{\ell,\zeta,j}}s_{q}^{\ell,\zeta,j}\sin\left(  qu_{j}+\varphi_{q}%
^{\ell,\zeta,j}\right)  \right) \\
=  &  \sum_{\zeta=1}^{m_{\ell}}\prod_{j=1}^{\delta}\frac{\text{d}^{r_{j}}%
}{\text{d}u_{j}^{r_{j}}}\left(  \sum_{p\in P_{\ell,\zeta,j}}c_{p}^{\ell
,\zeta,j}\cos\left(  pu_{j}+\psi_{p}^{\ell,\zeta,j}\right)  +\sum_{q\in
Q_{\ell,\zeta,j}}s_{q}^{\ell,\zeta,j}\sin\left(  qu_{j}+\varphi_{q}%
^{\ell,\zeta,j}\right)  \right) \\
=  &  \sum_{\zeta=1}^{m_{\ell}}\prod_{j=1}^{\delta}\left(  \sum_{i_{j}%
=0}^{2n_{j}}d_{i_{j}}^{\ell,\zeta}\left(  r_{j}\right)  T_{2n_{j},i_{j}%
}^{\alpha_{j}}\left(  u_{j}\right)  \right) \\
=  &  \sum_{i_{1}=0}^{2n_{1}}\sum_{i_{2}=0}^{2n_{2}}\cdots\sum_{i_{\delta}%
=0}^{2n_{\delta}}\left(  \sum_{\zeta=1}^{m_{\ell}}\prod_{j=1}^{\delta}%
d_{i_{j}}^{\ell,\zeta}\left(  r_{j}\right)  \right)  T_{2n_{1},i_{1}}%
^{\alpha_{1}}\left(  u_{1}\right)  T_{2n_{2},i_{2}}^{\alpha_{2}}\left(
u_{2}\right)  \cdot\ldots\cdot T_{2n_{\delta},i_{\delta}}^{\alpha_{\delta}%
}\left(  u_{\delta}\right)
\end{align*}
holds for all parameter vectors $\mathbf{u}=\left[  u_{j}\right]
_{j=1}^{\delta}\in\times_{j=1}^{\delta}\left[  0,\alpha_{j}\right]  $ and
coordinate functions $\ell=1,2,\ldots,\delta+\kappa$, i.e., $d_{i_{j}}%
^{\ell,\zeta}\left(  r_{j}\right)  $ can be calculated by means of formula
(\ref{trigonometric_control_points}) for all indices $\ell=1,2,\ldots
,\delta+\kappa$, $\zeta=1,2,\ldots,m_{\ell}$ and $j=1,2,\ldots,\delta$.
\end{proof}

\begin{example}
[Application of Theorem \ref{thm:cpbed_trigonometric_surfaces} --
surfaces]Using trigonometric surfaces of type (\ref{trigonometric_surface})
and applying Theorem \ref{thm:cpbed_trigonometric_surfaces}, Figs.
\ref{fig:torus} and \ref{fig:surface_of_revolution_star} show several control
point constellations for the exact description of the toroidal patch%
\begin{align}
\mathbf{s}\left(  u_{1},u_{2}\right)   &  =\left[
\begin{array}
[c]{c}%
s^{1}\left(  u_{1},u_{2}\right) \\
\\
s^{2}\left(  u_{1},u_{2}\right) \\
\\
s^{3}\left(  u_{1},u_{2}\right)
\end{array}
\right]  =\left[
\begin{array}
[c]{c}%
\left(  3+\frac{15}{8}\left(  \sqrt{5}-1\right)  \sin\left(  u_{1}\right)
\right)  \cos\left(  u_{2}\right) \\
\\
\left(  3+\frac{15}{8}\left(  \sqrt{5}-1\right)  \sin\left(  u_{1}\right)
\right)  \sin\left(  u_{2}\right) \\
\\
\frac{15}{8}\left(  \sqrt{5}-1\right)  \cos\left(  u_{1}\right)
\end{array}
\right]  ,\label{torus}\\
\left(  u_{1},u_{2}\right)   &  \in\left[  0,\frac{3\pi}{4}\right]
\times\left[  0,\frac{\pi}{2}\right] \nonumber
\end{align}
and of the patch%
\begin{align}
\mathbf{s}\left(  u_{1},u_{2}\right)   &  =\left[
\begin{array}
[c]{c}%
s^{1}\left(  u_{1},u_{2}\right) \\
s^{2}\left(  u_{1},u_{2}\right) \\
s^{3}\left(  u_{1},u_{2}\right)
\end{array}
\right]  =\left[
\begin{array}
[c]{c}%
\left(  12+6\sin\left(  u_{1}\right)  -\sin\left(  6u_{1}\right)  \right)
\cos\left(  u_{2}\right) \\
\left(  12+6\sin\left(  u_{1}\right)  -\sin\left(  6u_{1}\right)  \right)
\sin\left(  u_{2}\right) \\
6\cos\left(  u_{1}\right)  +\cos\left(  6u_{1}\right)
\end{array}
\right]  ,\label{surface_of_revolution_star}\\
&  \left(  u_{1},u_{2}\right)  \in \left[  0,\frac{\pi}{2}\right]
\times\left[  0,\frac{2\pi}{3}\right] \nonumber
\end{align}
that also lies on a surface of revolution generated by the rotation of the
hypocycloid%
\begin{equation}
\mathbf{g}\left(  u\right)  =\left[
\begin{array}
[c]{c}%
g^{1}\left(  u\right) \\
g^{2}\left(  u\right)
\end{array}
\right]  =\left[
\begin{array}
[c]{c}%
6\sin\left(  u\right)  -\sin\left(  6u\right) \\
6\cos\left(  u\right)  +\cos\left(  6u\right)
\end{array}
\right]  ,~u\in\left[  0,2\pi\right]  \label{star_hypocycloid}%
\end{equation}
about the axis $z$, respectively.
\end{example}

\begin{figure}
[!h]
\begin{center}
\includegraphics[
natheight=4.094900in,
natwidth=6.298400in,
height=3.6729in,
width=6.3261in
]%
{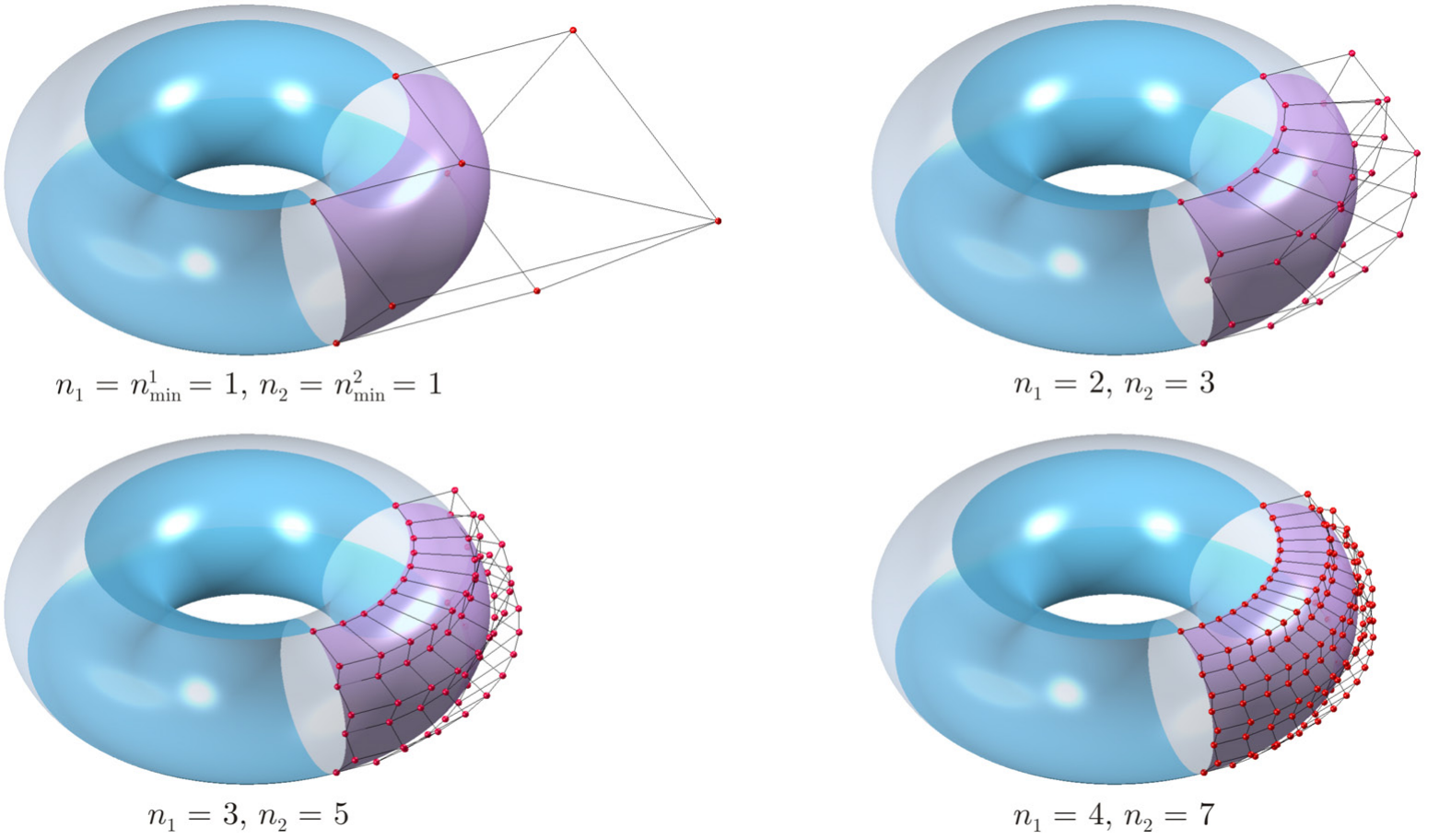}%
\caption{Control point based exact description of the toroidal patch
(\ref{torus}) by means of $3$-dimensional $2$-variate trigonometric surfaces
of the type (\ref{trigonometric_surface}). Control nets corresponding to
different orders were obtained by using Theorem
\ref{thm:cpbed_trigonometric_surfaces} with parameter settings $\delta=2$,
$\kappa=1$; $\alpha_{1}=\frac{3\pi}{4}$, $\alpha_{2}=\frac{\pi}{2}$;
$m_{1}=m_{2}=m_{3}=1$; $P_{1,1,1}=\left\{  0\right\}  $, $P_{1,1,2}=\left\{
1\right\}  $, $Q_{1,1,1}=\left\{  1\right\}  $,~$Q_{1,1,2}=\varnothing$,
$c_{0}^{1,1,1}=3$, $c_{1}^{1,1,2}=1$, $s_{1}^{1,1,1}=\frac{15}{8}\left(
\sqrt{5}-1\right)  $, $\psi_{0}^{1,1,1}=\psi_{1}^{1,1,2}=\varphi_{1}%
^{1,1,1}=0$; $P_{2,1,1}=\left\{  0\right\}  $, $P_{2,1,2}=\varnothing$,
$Q_{2,1,1}=Q_{2,1,2}=\left\{  1\right\}  $, $c_{0}^{2,1,1}=3$, $s_{1}%
^{2,1,1}=\frac{15}{8}\left(  \sqrt{5}-1\right)  $, $s_{1}^{2,1,2}=1$,
$\psi_{0}^{2,1,1}=\varphi_{1}^{2,1,1}=\varphi_{1}^{2,1,2}=0$; $P_{3,1,1}%
=\left\{  1\right\}  $, $P_{3,1,2}=\left\{  0\right\}  $, $Q_{3,1,1}%
=Q_{3,1,2}=\varnothing$, $c_{1}^{3,1,1}=\frac{15}{8}\left(  \sqrt{5}-1\right)
$, $c_{0}^{3,1,2}=1$, $\psi_{0}^{3,1,1}=\psi_{1}^{3,1,2}=0$.}%
\label{fig:torus}%
\end{center}
\end{figure}

%BeginExpansion
\begin{figure}
[!h]
\begin{center}
\includegraphics[
natheight=2.546900in,
natwidth=6.298400in,
height=2.6247in,
width=6.3261in
]%
{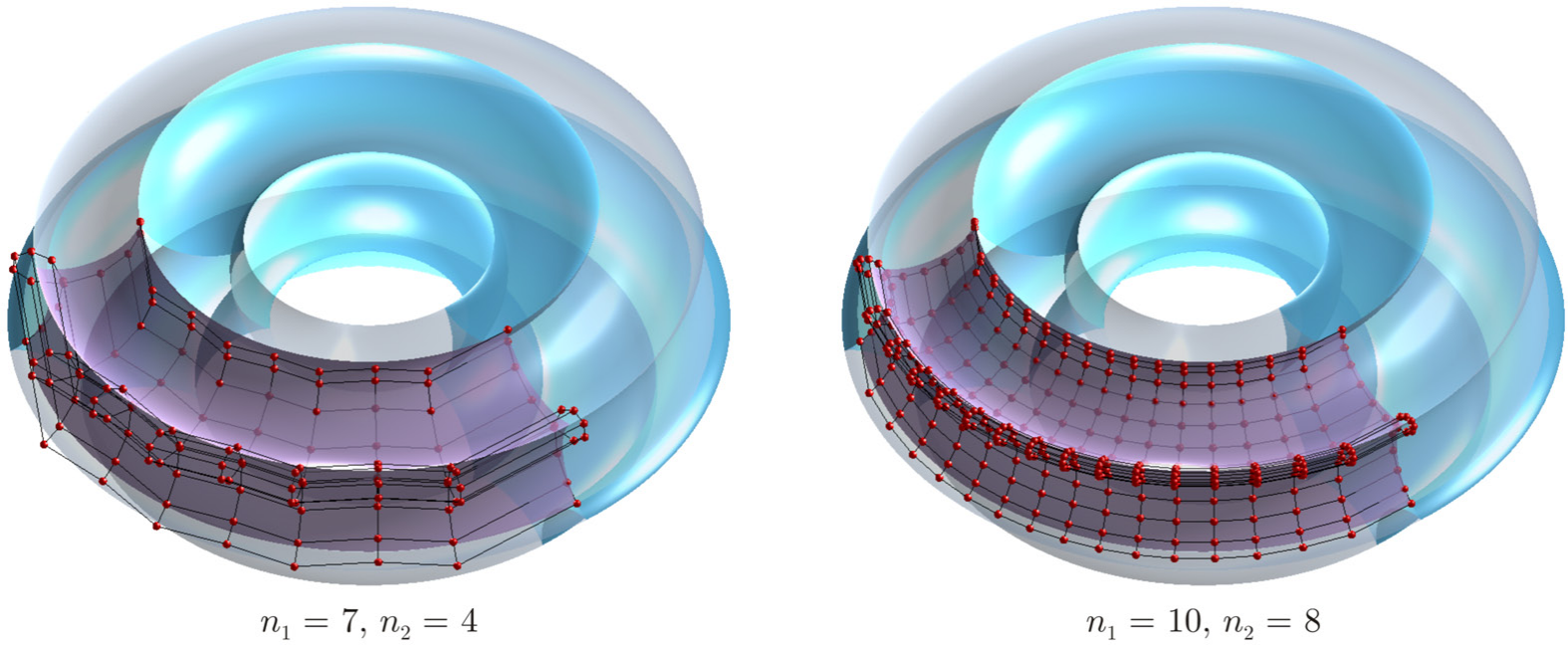}%
\caption{Control point based exact description of the trigonometric patch
(\ref{surface_of_revolution_star}) that lies on a surface of revolution
generated by the rotation of the hypocycloid (\ref{star_hypocycloid}) about
the axis $z$. Control nets were generated by formulae described in Theorem
\ref{thm:cpbed_trigonometric_surfaces} ($\delta=2$, $\kappa=1$; $\alpha
_{1}=\frac{\pi}{2}$, $\alpha_{2}=\frac{2\pi}{3}$; $m_1=m_2=m_3=1$).}%
\label{fig:surface_of_revolution_star}%
\end{center}
\end{figure}
%EndExpansion

\begin{example}
[Application of Theorem \ref{thm:cpbed_trigonometric_surfaces} -- volumes]%
$3$-dimensional trigonometric volumes can also be exactly described by means
of $3$-variate tensor product surfaces of the type
(\ref{trigonometric_surface}). Figs.
\ref{fig:non_rational_trigonometric_volume} and
\ref{fig:non_rational_trigonometric_volume2} illustrate control grids that
generate the volumes%
\begin{align}
\mathbf{s}\left(  u_{1},u_{2},u_{3}\right)   &  =\left[
\begin{array}
[c]{c}%
s^{1}\left(  u_{1},u_{2},u_{3}\right) \\
s^{2}\left(  u_{1},u_{2},u_{3}\right) \\
s^{3}\left(  u_{1},u_{2},u_{3}\right)
\end{array}
\right] \nonumber\\
&  =\left[
\begin{array}
[c]{c}%
\left(  6+\cos\left(  u_{1}+\frac{\pi}{3}\right)  \right)  \cos\left(
u_{2}-\frac{\pi}{6}\right)  \cos\left(  u_{3}+\frac{\pi}{3}\right) \\
\\
\left(  6+\cos\left(  u_{1}+\frac{\pi}{3}\right)  \right)  \cos\left(
u_{2}-\frac{\pi}{6}\right)  \sin\left(  u_{3}+\frac{\pi}{3}\right) \\
\\
\cos\left(  u_{1}+\frac{\pi}{3}\right)  \sin\left(  u_{2}-\frac{\pi}%
{6}\right)
\end{array}
\right]  ,\label{non_rational_trigonometric_volume1}\\
\left(  u_{1},u_{2},u_{3}\right)   &  \in\left[  0,\frac{\pi}{2}\right]
\times\left[  0,\frac{\pi}{2}\right]  \times\left[  0,\frac{2\pi}{3}\right]
\nonumber
\end{align}
and%
\begin{align}
\mathbf{s}\left(  u_{1},u_{2},u_{3}\right)   &  =\left[
\begin{array}
[c]{c}%
s^{1}\left(  u_{1},u_{2},u_{3}\right) \\
s^{2}\left(  u_{1},u_{2},u_{3}\right) \\
s^{3}\left(  u_{1},u_{2},u_{3}\right)
\end{array}
\right] \nonumber\\
&  =\left[
\begin{array}
[c]{c}%
\left(  2+\frac{3}{4}\sin\left(  u_{1}\right)  -\frac{1}{4}\sin\left(
3u_{1}\right)  \right)  \cos\left(  u_{2}\right)  \left(  \frac{3}{2}-\frac
{1}{2}\cos\left(  2u_{3}\right)  \right) \\
\\
\left(  \frac{5}{2}-\frac{1}{2}\cos\left(  2u_{1}\right)  \right)  \sin\left(
u_{2}\right)  \left(  1+\frac{3}{4}\sin\left(  u_{3}\right)  -\frac{1}{4}%
\sin\left(  3u_{3}\right)  \right) \\
\\
\left(  \frac{1}{2}+\frac{3}{8}\cos\left(  u_{1}\right)  +\frac{1}{2}%
\cos\left(  2u_{1}\right)  +\frac{1}{8}\cos\left(  3u_{1}\right)  \right)
\left(  \frac{3}{2}+\cos\left(  u_{2}\right)  +\sin\left(  u_{3}\right)
\right)
\end{array}
\right]  ,\label{non_rational_trigonometric_volume2}\\
\left(  u_{1},u_{2},u_{3}\right)   &  \in\left[  0,\frac{\pi}{2}\right]
\times\left[  0,\frac{2\pi}{3}\right]  \times\left[  0,\frac{\pi}{2}\right]
,\nonumber
\end{align}
respectively.
\end{example}

\begin{figure}
[!h]
\begin{center}
\includegraphics[
natheight=2.801100in,
natwidth=6.298400in,
height=2.8011in,
width=6.2984in
]%
{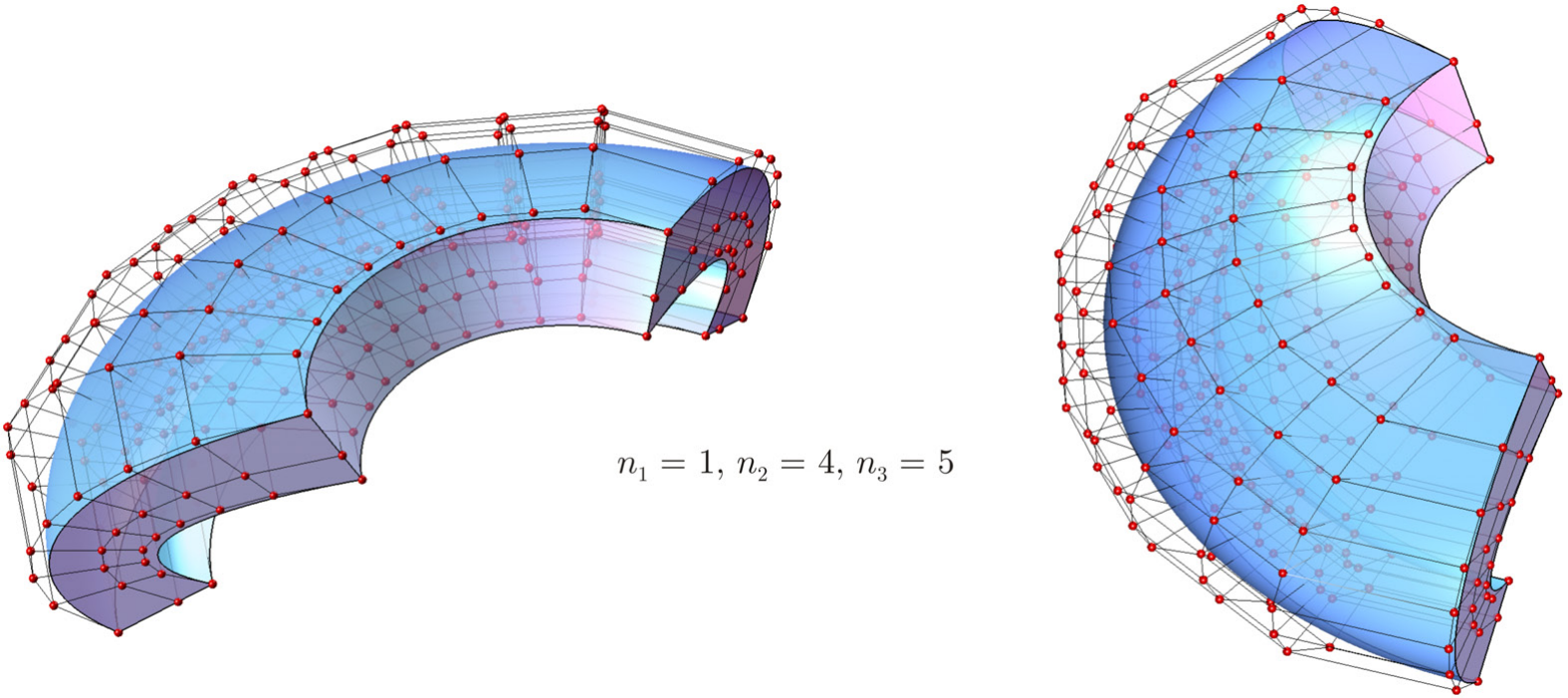}%
\caption{Different views of the same $3$-dimensional trigonometric volume
(\ref{non_rational_trigonometric_volume1}) along with its control grid
calculated by means of Theorem \ref{thm:cpbed_trigonometric_surfaces}
($\delta=3$, $\kappa=0$; $\alpha_{1}=\frac{\pi}{2}$, $\alpha_{2}=\frac{\pi}%
{2}$, $\alpha_{3}=\frac{2\pi}{3}$; $m_1=m_2=m_3=1$).}%
\label{fig:non_rational_trigonometric_volume}%
\end{center}
\end{figure}

\begin{figure}
[!h]
\begin{center}
\includegraphics[
natheight=2.633300in,
natwidth=6.138400in,
height=2.6602in,
width=6.1661in
]%
{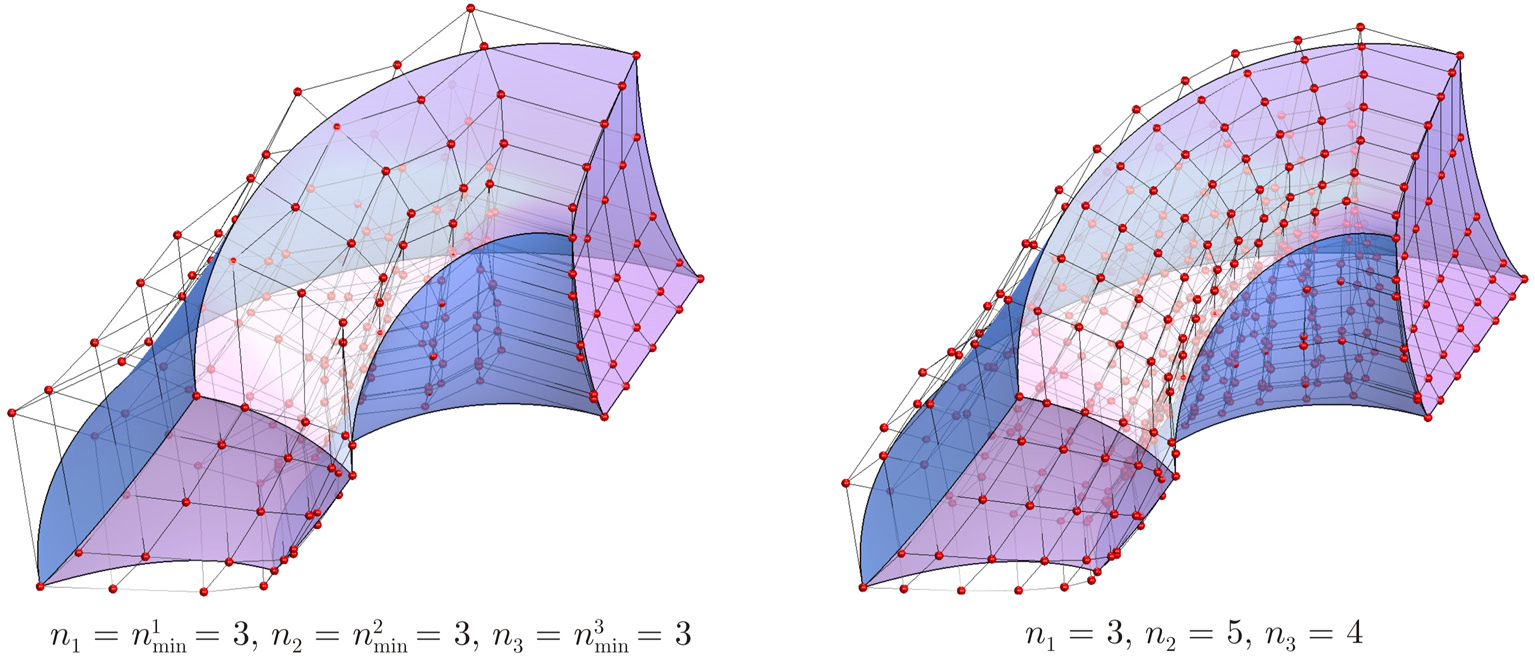}%
\caption{Control point based exact description of the $3$-dimensional
trigonometric volume (\ref{non_rational_trigonometric_volume2}) by means of
$3$-variate tensor product surfaces of the type (\ref{trigonometric_surface})
with different orders. Control grids were obtained by using Theorem
\ref{thm:cpbed_trigonometric_surfaces} ($\delta=3$, $\kappa=0$; $\alpha
_{1}=\frac{\pi}{2}$, $\alpha_{2}=\frac{2\pi}{3}$, $\alpha_{3}=\frac{\pi}{2}%
$; $m_1=m_2=1$, $m_3=2$).}%
\label{fig:non_rational_trigonometric_volume2}%
\end{center}
\end{figure}
%EndExpansion

Theorem \ref{thm:cpbed_trigonometric_surfaces} can also be used to provide
control point based exact description of the rational trigonometric surface%
\begin{equation}
\mathbf{s}\left(  \mathbf{u}\right)  =\frac{1}{s^{\delta+\kappa+1}\left(
\mathbf{u}\right)  }\left[
\begin{array}
[c]{cccc}%
s^{1}\left(  \mathbf{u}\right)  & s^{2}\left(  \mathbf{u}\right)  & \cdots &
s^{\delta+\kappa}\left(  \mathbf{u}\right)
\end{array}
\right]  \in%
%TCIMACRO{\U{211d} }%
%BeginExpansion
\mathbb{R}
%EndExpansion
^{\delta+\kappa}, \label{rational_trigonometric_surface_to_be_described}%
\end{equation}
where%
\begin{align*}
\mathbf{u}  &  =\left[  u_{j}\right]  _{j=1}^{\delta}\in\times_{j=1}^{\delta
}\left[  0,\alpha_{j}\right]  ,~\alpha_{j}\in\left(  0,\pi\right)
,~\kappa\geq0,\\
s^{\ell}\left(  \mathbf{u}\right)   &  =\sum_{\zeta=1}^{m_{\ell}}\prod
_{j=1}^{\delta}\left(  \sum_{p\in P_{\ell,\zeta,j}}c_{p}^{\ell,\zeta,j}%
\cos\left(  pu_{j}+\psi_{p}^{\ell,\zeta,j}\right)  +\sum_{q\in Q_{\ell
,\zeta,j}}s_{q}^{\ell,\zeta,j}\sin\left(  qu_{j}+\varphi_{q}^{\ell,\zeta
,j}\right)  \right)  ,\\
\ell &  =1,2,\ldots,\delta+\kappa+1,\\
P_{\ell,\zeta,j},Q_{\ell,\zeta,j}  &  \subset%
%TCIMACRO{\U{2115} }%
%BeginExpansion
\mathbb{N}
%EndExpansion
,~m_{\ell}\in%
%TCIMACRO{\U{2115} }%
%BeginExpansion
\mathbb{N}
%EndExpansion
\setminus\left\{  0\right\}  ,~c_{p}^{\ell,\zeta,j},\psi_{p}^{\ell,\zeta
,j},s_{q}^{\ell,\zeta,j},\varphi_{q}^{\ell,\zeta,j}\in%
%TCIMACRO{\U{211d}}%
%BeginExpansion
\mathbb{R}%
%EndExpansion
\end{align*}
and%
\[
s^{\delta+\kappa+1}\left(  \mathbf{u}\right)  >0,~\forall\mathbf{u}\in
\times_{j=1}^{\delta}\left[  0,\alpha_{j}\right]  .
\]
Similarly to Algorithm \ref{alg:cpbed_rational_trigonometric_curves} one can
formulate the next process.

\begin{algorithm}
[Control point based exact description of rational trigonometric
surfaces]\label{alg:cpbed_rational_trigonometric_surfaces}Operations that
ensure the control point based exact description of the surface
(\ref{rational_trigonometric_surface_to_be_described}) are as follows:

\begin{itemize}
\item let%
\[
n_{j}\geq n_{\min}^{j}=\max\left\{  z_{j}:z_{j}\in\cup_{\ell=1}^{\delta+\kappa+1}%
\cup_{\zeta=1}^{m_{\ell}}\left(  P_{\ell,\zeta,j}\cup Q_{\ell,\zeta,j}\right)
\right\}  ,~j=1,2,\ldots,\delta
\]
be arbitrarily fixed orders in directions $u_{1},u_{2},\ldots,u_{\delta}$;

\item apply Theorem \ref{thm:cpbed_trigonometric_surfaces} to the pre-image
\begin{equation}
\mathbf{s}_{\wp}\left(  \mathbf{u}\right)  =\left[  s^{\ell}\left(
\mathbf{u}\right)  \right]  _{\ell=1}^{\delta+\kappa+1}\in%
%TCIMACRO{\U{211d} }%
%BeginExpansion
\mathbb{R}
%EndExpansion
^{\delta+\kappa+1},~\mathbf{u}=\left[  u_{j}\right]  _{j=1}^{\delta}\in
\times_{j=1}^{\delta}\left[  0,\alpha_{j}\right]
\label{pre_image_of_rational_trigonometric_surface_to_be_described}%
\end{equation}
of the surface (\ref{rational_trigonometric_surface_to_be_described}), i.e.,
compute control points%
\[
\mathbf{d}_{i_{1},i_{2},\ldots,i_{\delta}}^{\wp}=\left[  d_{i_{1},i_{2}%
,\ldots,i_{\delta}}^{\ell}\right]  _{\ell=1}^{\delta+\kappa+1}\in%
%TCIMACRO{\U{211d} }%
%BeginExpansion
\mathbb{R}
%EndExpansion
^{\delta+\kappa+1},~i_{j}=0,1,\ldots,2n_{j},~j=1,2,\ldots,\delta
\]
for the exact trigonometric representation of
(\ref{pre_image_of_rational_trigonometric_surface_to_be_described}) in the
pre-image space $%
%TCIMACRO{\U{211d} }%
%BeginExpansion
\mathbb{R}
%EndExpansion
^{\delta+\kappa+1}$;

\item project the obtained control points onto the hyperplane $x^{\delta
+\kappa+1}=1$ that results the control points
\[
\mathbf{d}_{i_{1},i_{2},\ldots,i_{\delta}}=\frac{1}{d_{i_{1},i_{2}%
,\ldots,i_{\delta}}^{\delta+\kappa+1}}\left[  d_{i_{1},i_{2},\ldots,i_{\delta
}}^{\ell}\right]  _{\ell=1}^{\delta+\kappa}\in%
%TCIMACRO{\U{211d} }%
%BeginExpansion
\mathbb{R}
%EndExpansion
^{\delta+\kappa},~i_{j}=0,1,\ldots,2n_{j},~j=1,2,\ldots,\delta
\]
and weights%
\[
\omega_{i_{1},i_{2},\ldots,i_{\delta}}=d_{i_{1},i_{2},\ldots,i_{\delta}%
}^{\delta+\kappa+1},~i_{j}=0,1,\ldots,2n_{j},~j=1,2,\ldots,\delta
\]
needed for the rational trigonometric representation
(\ref{trigonometric_rational_surface}) of
(\ref{rational_trigonometric_surface_to_be_described});

\item if not all weights are positive, try to increase the components of the
order $\left(  n_{1},n_{2},\ldots,n_{\delta}\right)  $ of the trigonometric
surface used for the control point based exact description of the pre-image
$\mathbf{s}_{\wp}$ in $%
%TCIMACRO{\U{211d} }%
%BeginExpansion
\mathbb{R}
%EndExpansion
^{\delta+\kappa+1}$ and repeat the previous projectional and weight
determination step.
\end{itemize}
\end{algorithm}

\begin{example}
[Application of Algorithm \ref{alg:cpbed_rational_trigonometric_surfaces} --
rational surfaces]Using surfaces of the type
(\ref{trigonometric_rational_surface}), Fig.
\ref{fig:rational_trigonometric_surface}\ shows the control point based exact
description of the rational trigonometric patch%
\begin{equation}
\mathbf{s}\left(  u_{1},u_{2}\right)  =\frac{1}{s^{4}\left(  u_{1}%
,u_{2}-\gamma\right)  }\left[
\begin{array}
[c]{c}%
s^{1}\left(  u_{1},u_{2}-\gamma\right) \\
s^{2}\left(  u_{1},u_{2}-\gamma\right) \\
s^{3}\left(  u_{1},u_{2}-\gamma\right)
\end{array}
\right]  ,~\left(  u_{1},u_{2}\right)  \in\left[  0,\frac{\pi}{4}\right]
\times\left[  0,\frac{\pi}{3}\right]  , \label{rational_trigonometric_8}%
\end{equation}
where%
\begin{align*}
s^{1}\left(  u_{1},u_{2}\right)  =  &  \cos\left(  u_{1}\right)  \left(
173\cos\left(  u_{2}\right)  -10\sin\left(  3u_{2}\right)  -10\sin\left(
5u_{2}\right)  +\cos\left(  7u_{2}\right)  +\cos\left(  9u_{2}\right)  \right)
\\
&  +\sin\left(  u_{1}\right)  \left(  -173\sin\left(  u_{2}\right)
+10\cos\left(  3u_{2}\right)  -10\cos\left(  5u_{2}\right)  +\sin\left(
7u_{2}\right)  -\sin\left(  9u_{2}\right)  \right)  ,\\
& \\
s^{2}\left(  u_{1},u_{2}\right)  =  &  \cos\left(  u_{1}\right)  \left(
173\sin\left(  u_{2}\right)  -10\cos\left(  3u_{2}\right)  +10\cos\left(
5u_{2}\right)  -\sin\left(  7u_{2}\right)  +\sin\left(  9u_{2}\right)  \right)
\\
&  +\sin\left(  u_{1}\right)  \left(  173\cos\left(  u_{2}\right)
-10\sin\left(  3u_{2}\right)  -10\sin\left(  5u_{2}\right)  +\cos\left(
7u_{2}\right)  +\cos\left(  9u_{2}\right)  \right)  ,\\
& \\
s^{3}\left(  u_{1},u_{2}\right)  =  &  20\cos\left(  u_{1}\right)  \left(
5\cos\left(  u_{2}\right)  +\sin\left(  3u_{2}\right)  +\sin\left(
5u_{2}\right)  \right) \\
&  +20\sin\left(  u_{1}\right)  \left(  5\sin\left(  u_{2}\right)
+\cos\left(  3u_{2}\right)  -\cos\left(  5u_{2}\right)  \right)  ,\\
& \\
s^{4}\left(  u_{1},u_{2}\right)  =  &  20\cos\left(  u_{1}\right)  \left(
5\sin\left(  u_{2}\right)  +\cos\left(  3u_{2}\right)  -\cos\left(
5u_{2}\right)  \right) \\
&  -20\sin\left(  u_{1}\right)  \left(  5\cos\left(  u_{2}\right)
+\sin\left(  3u_{2}\right)  +\sin\left(  5u_{2}\right)  \right)  +200
\end{align*}
and $\gamma=\frac{\pi}{3}$.
\end{example}

%

%TCIMACRO{\FRAME{fhFU}{6.3261in}{2.7812in}{0pt}{\Qcb{Control point based exact
%description of the patch (\ref{rational_trigonometric_8}) by means of
%$3$-dimensional $2$-variate rational trigonometric patches of different
%orders. Control nets were obtained by following the steps of Algorithm
%\ref{alg:cpbed_rational_trigonometric_surfaces} ($\delta=2$, $\kappa=1$;
%$\alpha_{1}=\frac{\pi}{4}$, $\alpha_{2}=\frac{\pi}{3}$; $m_{1}=m_{2}=m_{3}=2$,
%$m_{4}=3$).}}{\Qlb{fig:rational_trigonometric_surface}}%
%{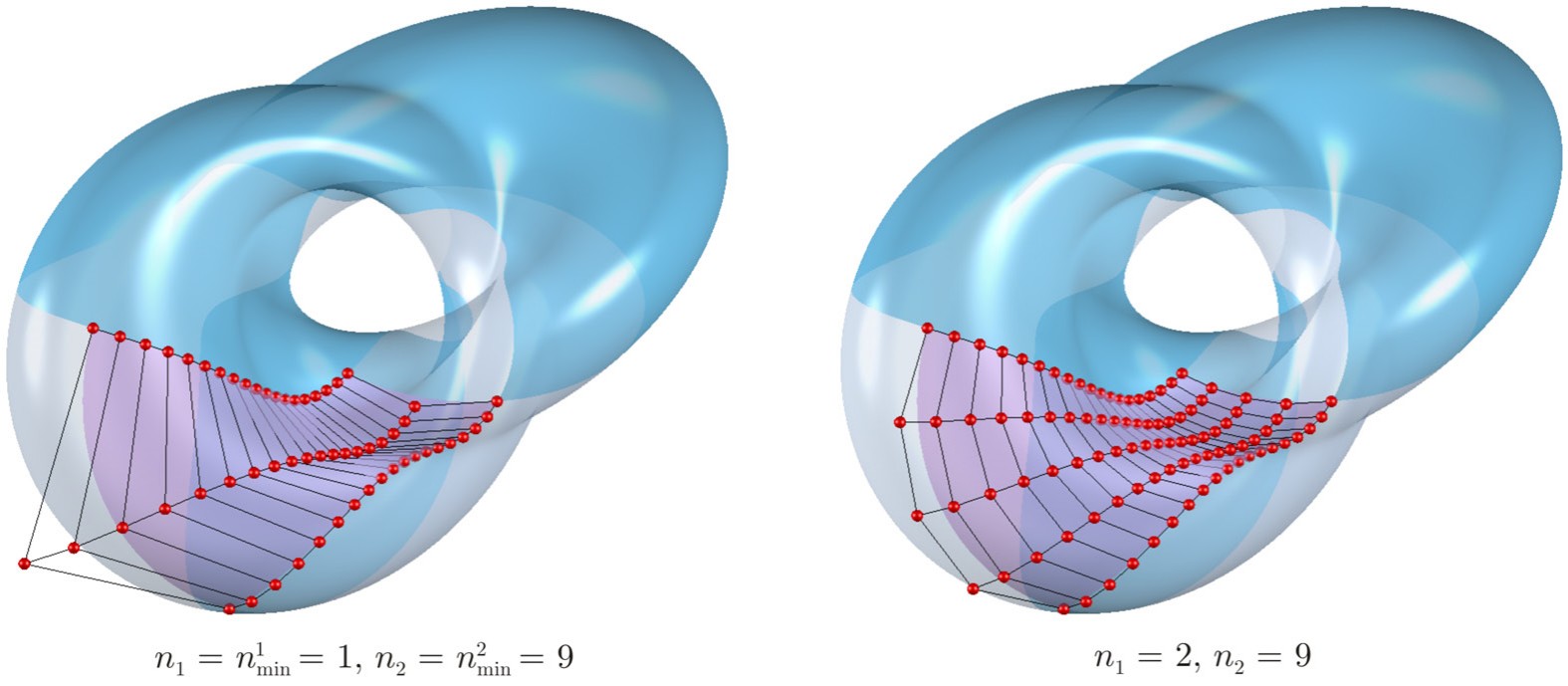}%
%{\special{ language "Scientific Word";  type "GRAPHIC";
%maintain-aspect-ratio TRUE;  display "ICON";  valid_file "F";
%width 6.3261in;  height 2.7812in;  depth 0pt;  original-width 6.2984in;
%original-height 2.7536in;  cropleft "0";  croptop "1";  cropright "1";
%cropbottom "0";
%filename 'rational_trigonometric_surface_merged+.pdf';file-properties "XNPEU";}%
%} }%
%BeginExpansion
\begin{figure}
[!h]
\begin{center}
\includegraphics[
natheight=2.753600in,
natwidth=6.298400in,
height=2.7812in,
width=6.3261in
]%
{}%
\caption{Control point based exact description of the patch
(\ref{rational_trigonometric_8}) by means of $3$-dimensional $2$-variate
rational trigonometric patches of different orders. Control nets were obtained
by following the steps of Algorithm
\ref{alg:cpbed_rational_trigonometric_surfaces} ($\delta=2$, $\kappa=1$;
$\alpha_{1}=\frac{\pi}{4}$, $\alpha_{2}=\frac{\pi}{3}$; $m_{1}=m_{2}=m_{3}=2$,
$m_{4}=3$).}%
\label{fig:rational_trigonometric_surface}%
\end{center}
\end{figure}
%EndExpansion

\subsection{Description of (rational) hyperbolic curves and
surfaces\label{sec:exact_description_hyperbolic}}

Assume that the smooth parametric curve%
\begin{equation}
\mathbf{g}\left(  u\right)  =\left[  g^{\ell}\left(  u\right)  \right]
_{\ell=1}^{\delta},~u\in\left[  0,\alpha\right]  ,~\alpha>0
\label{traditional_hyperbolic_curve}%
\end{equation}
has coordinate functions of the form%
\[
g^{\ell}\left(  u\right)  =\sum_{p\in P_{\ell}}c_{p}^{\ell}\cosh\left(
pu+\psi_{p}^{\ell}\right)  +\sum_{q\in Q_{\ell}}s_{q}^{\ell}\sinh\left(
qu+\varphi_{q}^{\ell}\right)  ,
\]
where $P_{\ell},Q_{\ell}\subset%
%TCIMACRO{\U{2115} }%
%BeginExpansion
\mathbb{N}
%EndExpansion
$ and $c_{p}^{\ell},\psi_{p}^{\ell},s_{q}^{\ell},\varphi_{q}^{\ell}\in%
%TCIMACRO{\U{211d} }%
%BeginExpansion
\mathbb{R}
%EndExpansion
$ and consider the vector space (\ref{hyperbolic_vector_space}) of order
\begin{equation}
n\geq n_{\min}=\max\left\{  z:z\in\cup_{\ell=1}^{\delta}\left(  P_{\ell}\cup
Q_{\ell}\right)  \right\}  . \label{minimal_hyperbolic_order}%
\end{equation}

Using Lemma \ref{lem:hp} and performing calculations similar to the proof of
Theorem \ref{thm:cpbed_trigonometric_curves} one obtains the next statement.

\begin{theorem}
[Control point based exact description of hyperbolic curves]%
\label{thm:cpbed_hyperbolic_curves}For any arbitrarily fixed order
(\ref{minimal_hyperbolic_order}) the curve (\ref{traditional_hyperbolic_curve}%
) given in traditional hyperbolic parametric form has a unique control point
based exact description, more precisely one has that
\[
\frac{\text{\emph{d}}^{r}}{\text{\emph{d}}u^{r}}g^{\ell}\left(  u\right)
=\sum_{i=0}^{2n}d_{i}^{\ell}\left(  r\right)  H_{2n,i}^{\alpha}\left(
u\right)  ,
\]
where%
\[
d_{i}^{\ell}\left(  r\right)  =\left\{
\begin{array}
[c]{ll}%
%TCIMACRO{\dsum \limits_{p\in P_{\ell}}}%
%BeginExpansion
{\displaystyle\sum\limits_{p\in P_{\ell}}}
%EndExpansion
c_{p}^{\ell}p^{r}\left(  \rho_{p,i}^{n}\cosh\left(  \psi_{p}^{\ell}\right)
+\sigma_{p,i}^{n}\sinh\left(  \psi_{p}^{\ell}\right)  \right)  & \\
+%
%TCIMACRO{\dsum \limits_{q\in Q_{\ell}}}%
%BeginExpansion
{\displaystyle\sum\limits_{q\in Q_{\ell}}}
%EndExpansion
s_{q}^{\ell}q^{r}\left(  \sigma_{q,i}^{n}\cosh\left(  \varphi_{q}^{\ell
}\right)  +\rho_{q,i}^{n}\sinh\left(  \varphi_{q}^{\ell}\right)  \right)  , &
r=2z,\\
& \\%
%TCIMACRO{\dsum \limits_{p\in P_{\ell}}}%
%BeginExpansion
{\displaystyle\sum\limits_{p\in P_{\ell}}}
%EndExpansion
c_{p}^{\ell}p^{r}\left(  \sigma_{p,i}^{n}\cosh\left(  \psi_{p}^{\ell}\right)
+\rho_{p,i}^{n}\sinh\left(  \psi_{p}^{\ell}\right)  \right)  & \\
+%
%TCIMACRO{\dsum \limits_{q\in Q_{\ell}}}%
%BeginExpansion
{\displaystyle\sum\limits_{q\in Q_{\ell}}}
%EndExpansion
s_{q}^{\ell}q^{r}\left(  \rho_{q,i}^{n}\cosh\left(  \varphi_{q}^{\ell}\right)
+\sigma_{q,i}^{n}\sinh\left(  \varphi_{q}^{\ell}\right)  \right)  , & r=2z+1
\end{array}
\right.
\]
denotes the $\ell$th coordinate of the $i$th control point $\mathbf{d}_{i}$
needed for the hyperbolic curve description (\ref{hyperbolic_curve}).
\end{theorem}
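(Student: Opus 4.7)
The plan is to mirror exactly the argument used for Theorem \ref{thm:cpbed_trigonometric_curves}, but now invoking the hyperbolic counterpart (Lemma \ref{lem:hp}) instead of the trigonometric one (Lemma \ref{lem:tp}). Since the assumed order $n \geq n_{\min}$ is large enough for every coordinate function to lie in $\mathbb{H}_{2n}^{\alpha}$, each $g^{\ell}$ is a hyperbolic polynomial of order at most $n$ to which Lemma \ref{lem:hp} applies directly.

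The first step is to fix an arbitrary coordinate index $\ell \in \{1,2,\ldots,\delta\}$ and view $g^{\ell}(u)$ as a hyperbolic polynomial in the sense of Lemma \ref{lem:hp}, with the sets $P_{\ell}, Q_{\ell}$ and the scalars $c_p^{\ell}, \psi_p^{\ell}, s_q^{\ell}, \varphi_q^{\ell}$ playing the roles of $P, Q, c_p, \psi_p, s_q, \varphi_q$ respectively. Lemma \ref{lem:hp} then yields
\[
\frac{\mathrm{d}^{r}}{\mathrm{d}u^{r}} g^{\ell}(u) = \sum_{i=0}^{2n} d_i^{\ell}(r)\, H_{2n,i}^{\alpha}(u), \qquad \forall u \in [0,\alpha],
\]
where $d_i^{\ell}(r)$ is given by the even/odd case formula in \eqref{hyperbolic_ordinates} with the parameters specialized to the $\ell$th coordinate. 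This is precisely the coordinate-wise claim stated in the theorem.

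The second step is to observe that differentiation commutes with the coordinate decomposition, so that
\[
\frac{\mathrm{d}^{r}}{\mathrm{d}u^{r}} \mathbf{g}(u) = \left[ \frac{\mathrm{d}^{r}}{\mathrm{d}u^{r}} g^{\ell}(u) \right]_{\ell=1}^{\delta} = \sum_{i=0}^{2n} \bigl[ d_i^{\ell}(r) \bigr]_{\ell=1}^{\delta} H_{2n,i}^{\alpha}(u).
\]
Setting $\mathbf{d}_i(r) = [d_i^{\ell}(r)]_{\ell=1}^{\delta}$ and substituting into the hyperbolic curve template \eqref{hyperbolic_curve} gives the stated representation. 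Uniqueness of the control points follows from the fact that $\overline{\mathcal{H}}_{2n}^{\alpha}$ is a (normalized) B-basis of $\mathbb{H}_{2n}^{\alpha}$ and hence linearly independent, so the coefficients in the expansion are uniquely determined.

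There is no real obstacle here: the entire argument is a direct coordinate-wise application of Lemma \ref{lem:hp} followed by vectorial reassembly, completely analogous to the proof of Theorem \ref{thm:cpbed_trigonometric_curves}. The only thing one must keep track of carefully is the parity-dependent sign/role swapping between $\sigma_{k,i}^{n}$ and $\rho_{k,i}^{n}$ in \eqref{hyperbolic_ordinates} (arising from the fact that derivatives of $\cosh$ and $\sinh$ cycle with period two rather than four, and that identity \eqref{cosh_of_sum} carries a plus sign where \eqref{cos_of_sum} carries a minus), but this bookkeeping is already absorbed into Lemma \ref{lem:hp} and therefore requires no further calculation in the present proof.
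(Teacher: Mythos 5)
Your proposal is correct and matches the paper's own route: the paper simply states that the theorem follows by ``using Lemma \ref{lem:hp} and performing calculations similar to the proof of Theorem \ref{thm:cpbed_trigonometric_curves}'', which is exactly the coordinate-wise application of Lemma \ref{lem:hp} followed by vectorial reassembly that you carry out. Your added remark on uniqueness via linear independence of the normalized B-basis $\overline{\mathcal{H}}_{2n}^{\alpha}$ is a welcome (and correct) justification of the word ``unique'' in the statement.
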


\begin{example}
[Application of Theorem \ref{thm:cpbed_hyperbolic_curves} -- curves]Fig.
\ref{fig:equilateral_hyperbola} shows the control point based description of
the arc%
\begin{equation}
\mathbf{g}\left(  u\right)  =\left[
\begin{array}
[c]{c}%
g^{1}\left(  u\right) \\
\\
g^{2}\left(  u\right)
\end{array}
\right]  =\left[
\begin{array}
[c]{r}%
\sinh\left(  u-\frac{3}{2}\right) \\
\\
\cosh\left(  u-\frac{3}{2}\right)
\end{array}
\right]  ,~u\in\left[  0,3\right]  \label{equilateral_hyperbola}%
\end{equation}
of an equilateral hyperbola.
\end{example}

%

%TCIMACRO{\FRAME{fhFU}{5.028in}{2.7121in}{0pt}{\Qcb{Using Theorem
%\ref{thm:cpbed_hyperbolic_curves}, the image shows the control point based
%exact description of the hyperbolic arc (\ref{equilateral_hyperbola}) by means
%of hyperbolic curves of the type (\ref{hyperbolic_curve}) of varying order and
%fixed shape parameter $\alpha=3$.}}{\Qlb{fig:equilateral_hyperbola}%
%}{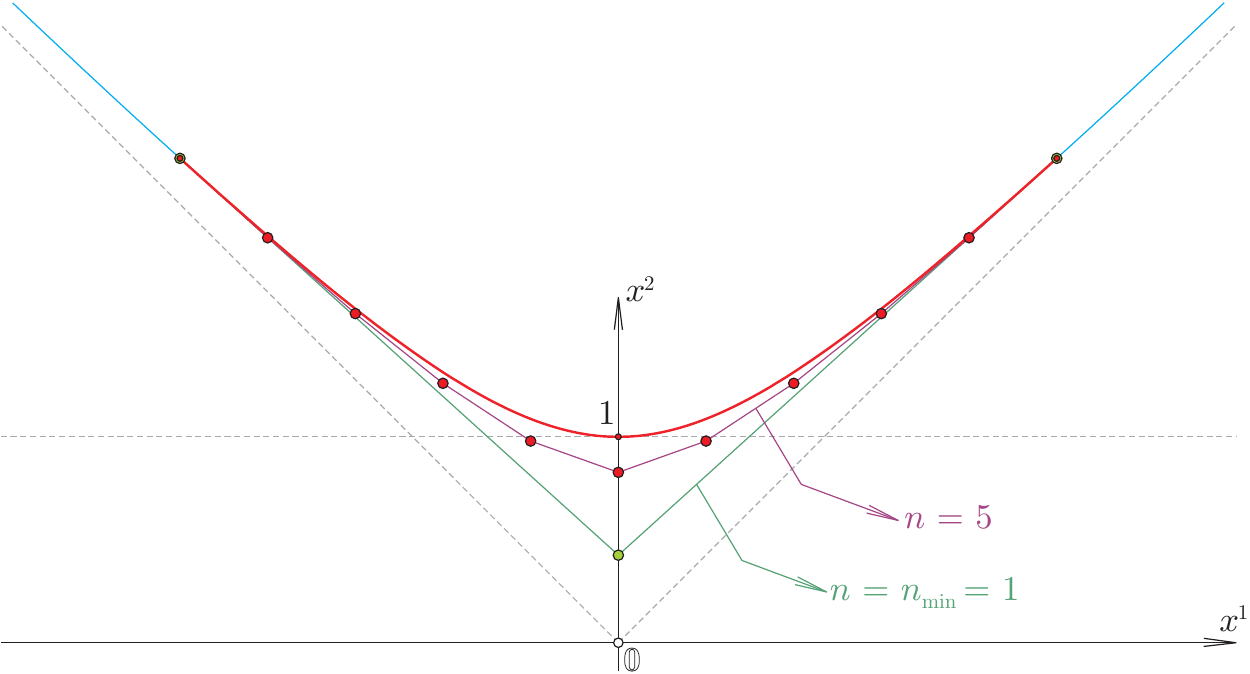}%
%{\special{ language "Scientific Word";  type "GRAPHIC";
%maintain-aspect-ratio TRUE;  display "USEDEF";  valid_file "F";
%width 5.028in;  height 2.7121in;  depth 0pt;  original-width 5.0004in;
%original-height 2.6844in;  cropleft "0";  croptop "1";  cropright "1";
%cropbottom "0";
%filename 'hyperbola_n_1_3_5_alpha_3_pch_m1p5.pdf';file-properties "XNPEU";}%
%} }%
%BeginExpansion
\begin{figure}
[!h]
\begin{center}
\includegraphics[
height=2.7121in,
width=5.028in
]%
{}%
\caption{Using Theorem \ref{thm:cpbed_hyperbolic_curves}, the image shows the
control point based exact description of the hyperbolic arc
(\ref{equilateral_hyperbola}) by means of hyperbolic curves of the type
(\ref{hyperbolic_curve}) of varying order and fixed shape parameter $\alpha
=3$.}%
\label{fig:equilateral_hyperbola}%
\end{center}
\end{figure}
%EndExpansion

\begin{remark}
[Hyperbolic counterpart of Theorem \ref{thm:cpbed_trigonometric_surfaces}%
]Higher order (mixed) partial derivatives of a non-rational higher dimensional
multivariate hyperbolic surface can also be exactly described by means of
Theorem \ref{thm:cpbed_hyperbolic_curves}; one would simply obtain the
hyperbolic counterpart of Theorem \ref{thm:cpbed_trigonometric_surfaces}.
Moreover, one can combine Theorems \ref{thm:cpbed_trigonometric_curves} and
\ref{thm:cpbed_hyperbolic_curves} in order to exactly describe patches of
hybrid multivariate surfaces%
\[
\mathbf{s}\left(  \mathbf{u}\right)  =\left[
\begin{array}
[c]{cccc}%
s^{1}\left(  \mathbf{u}\right)  & s^{2}\left(  \mathbf{u}\right)  & \cdots &
s^{\delta+\kappa}\left(  \mathbf{u}\right)
\end{array}
\right]  \in%
%TCIMACRO{\U{211d} }%
%BeginExpansion
\mathbb{R}
%EndExpansion
^{\delta+\kappa},~\mathbf{u}=\left[  u_{j}\right]  _{j=1}^{\delta}\in
\times_{j=1}^{\delta}\left[  0,\alpha_{j}\right]  ,~\alpha_{j}\in\left(
0,\beta_{j}\right)  ,~\kappa\geq0
\]
that are either trigonometric or hyperbolic in case of a fixed direction
$u_{j}$, where $\beta_{j}$ is either $\pi$ or $+\infty$ depending on the
trigonometric or hyperbolic type of the coordinate function $s^{j}$,
respectively. (Along a selected direction each coordinate function must be of
the same type).
\end{remark}

\begin{example}
[Combination of Theorems \ref{thm:cpbed_trigonometric_curves} and
\ref{thm:cpbed_hyperbolic_curves} \ -- hybrid surfaces]Fig. \ref{fig:catenoid}
illustrates the control point based exact description of the patch%
\begin{equation}
\mathbf{s}\left(  u_{1},u_{2}\right)  =\left[
\begin{array}
[c]{c}%
s^{1}\left(  u_{1},u_{2}\right) \\
\\
s^{2}\left(  u_{1},u_{2}\right) \\
\\
s^{3}\left(  u_{1},u_{2}\right)
\end{array}
\right]  =\left[
\begin{array}
[c]{c}%
\left(  1+\cosh\left(  u_{1}-\frac{3}{2}\right)  \right)  \sin\left(
u_{2}\right) \\
\\
\left(  1+\cosh\left(  u_{1}-\frac{3}{2}\right)  \right)  \cos\left(
u_{2}\right) \\
\\
\sinh\left(  u_{1}-\frac{3}{2}\right)
\end{array}
\right]  ,~\left(  u_{1},u_{2}\right)  \in\left[  0,3\right]  \times\left[
0,\frac{2\pi}{3}\right]  \label{catenoid}%
\end{equation}
that lies on a surface of revolution (also called hyperboloid) obtained
by the rotation of the equilateral hyperbolic arc%
\begin{equation}
\mathbf{g}\left(  u\right)  =\left[
\begin{array}
[c]{c}%
g^{1}\left(  u\right) \\
\\
g^{2}\left(  u\right)
\end{array}
\right]  =\left[
\begin{array}
[c]{r}%
\cosh\left(  u-\frac{3}{2}\right) \\
\\
\sinh\left(  u-\frac{3}{2}\right)
\end{array}
\right]  ,~u\in\left[  0,3\right]  \label{catenary}%
\end{equation}
along the axis $z$.
\end{example}

\begin{figure}
[!h]
\begin{center}
\includegraphics[
natheight=3.164400in,
natwidth=6.298400in,
height=3.192in,
width=6.3261in
]%
{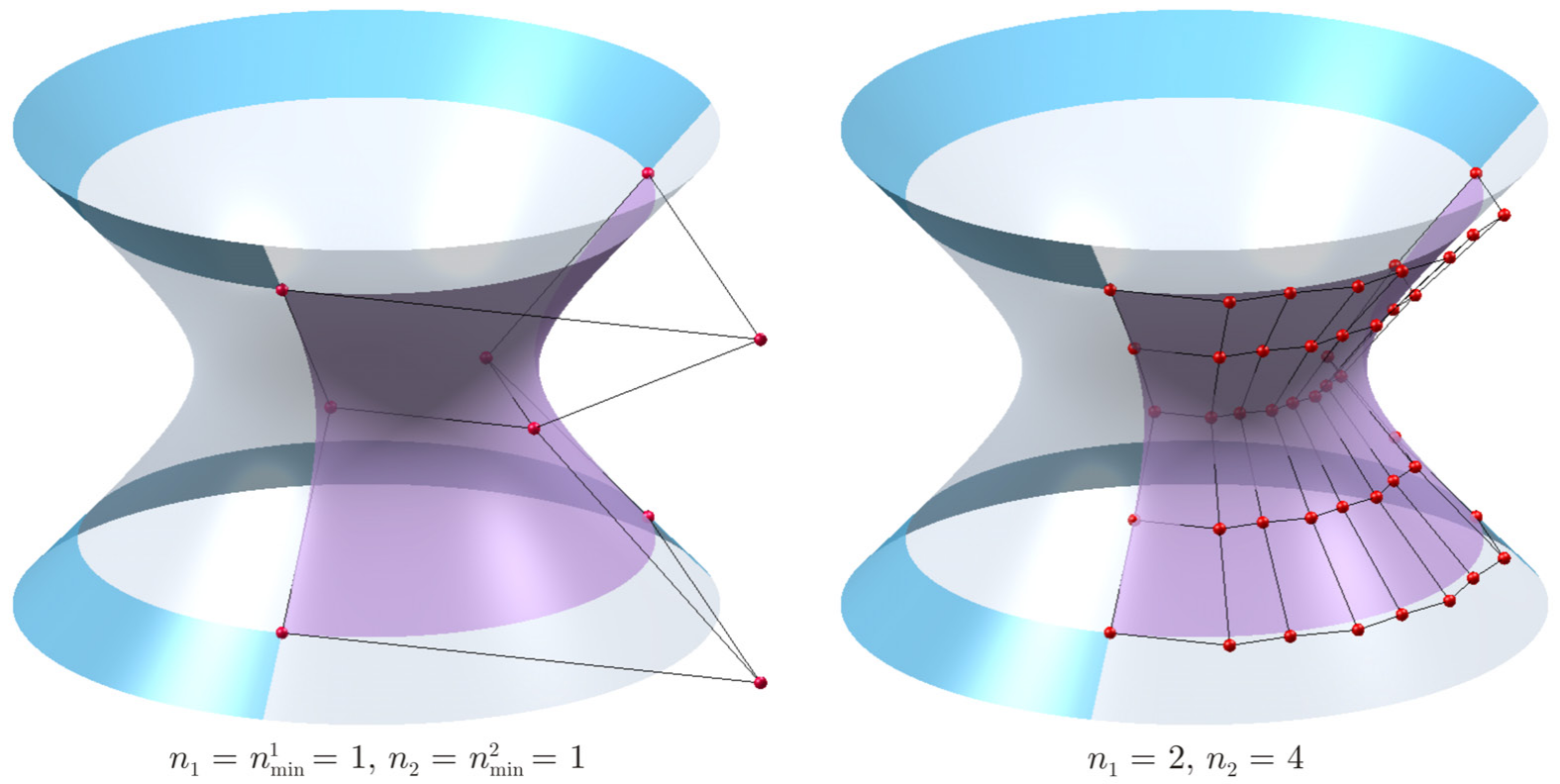}%
\caption{Control point based exact description of the hyperboloidal patch
(\ref{catenoid}) with hybrid surfaces of different orders. In order to
formulate the hybrid variant of Theorem \ref{thm:cpbed_trigonometric_surfaces}%
, in directions $u_{1}$ and $u_{2}$ the results of Theorems
\ref{thm:cpbed_hyperbolic_curves} ($\alpha_{1}=3$) and
\ref{thm:cpbed_trigonometric_curves} ($\alpha_{2}=\frac{2\pi}{3}$) were
applied ($m_1=m_2=m_3=1$), respectively.}%
\label{fig:catenoid}%
\end{center}
\end{figure}

\begin{remark}
[Hyperbolic counterpart of Algorithms
\ref{alg:cpbed_rational_trigonometric_curves} and
\ref{alg:cpbed_rational_trigonometric_surfaces}]Any smooth rational hyperbolic
curve/surface that is given in traditional parametric form (with a
non-vanishing function in its denominator) can also be exactly described by
means of rational hyperbolic curves/surfaces of the type
(\ref{rational_hyperbolic_curve})/(\ref{rational_hyperbolic_surface}); one
simply has to apply the hyperbolic counterpart of Algorithms
\ref{alg:cpbed_rational_trigonometric_curves} or
\ref{alg:cpbed_rational_trigonometric_surfaces}. Moreover, combining the
presented trigonometric algorithms and their hyperbolic counterparts, higher
dimensional hybrid multivariate rational surfaces can also exactly described
by means of multivariate hybrid tensor product surfaces.
\end{remark}

\begin{example}
[Applying the hyperbolic counterpart of Algorithm
\ref{alg:cpbed_rational_trigonometric_curves} ]Cases (a) and (b) of Fig.
\ref{fig:ed_rational_hpc}\ show the control point based exact description of
the rational hyperbolic arcs%
\begin{equation}
\mathbf{g}\left(  u\right)  =\frac{1}{g^{3}\left(  u\right)  }\left[
\begin{array}
[c]{c}%
g^{1}\left(  u\right)  \\
g^{2}\left(  u\right)
\end{array}
\right]  =\frac{1}{4+3\cosh\left(  u-1\right)  +\cosh\left(  3u-3\right)
}\left[
\begin{array}
[c]{r}%
4\cosh\left(  2u-2\right)  \\
8\sinh\left(  u-1\right)
\end{array}
\right]  ,~u\in\left[  0,3.1\right]  \label{rational_hyperbolic_curve_1}%
\end{equation}
and%
\begin{equation}
\mathbf{g}\left(  u\right)  =\frac{1}{g^{3}\left(  u\right)  }\left[
\begin{array}
[c]{c}%
g^{1}\left(  u\right)  \\
\\
g^{2}\left(  u\right)
\end{array}
\right]  =\frac{1}{11+4\cosh\left(  2u-\frac{3}{2}\right)  +\cosh\left(
4u-3\right)  }\left[
\begin{array}
[c]{c}%
16\cosh\left(  u-\frac{3}{4}\right)  \\
\\
4\sinh\left(  2u-\frac{3}{2}\right)
\end{array}
\right]  ,~u\in\left[  0,2.5\right]  \label{rational_hyperbolic_curve_2}%
\end{equation}
respectively. (Note that in both cases $\lim\limits_{u\rightarrow\pm\infty
}\mathbf{g}\left(  u\right)  =\mathbf{0}$.)
\end{example}

%

%TCIMACRO{\FRAME{fhFU}{6.1004in}{3.2162in}{0pt}{\Qcb{Using rational hyperbolic
%curves of the type (\ref{rational_hyperbolic_curve}), cases (\QTR{em}{a}) and
%(\QTR{em}{b}) illustrate the control point based exact description of arcs
%(\ref{rational_hyperbolic_curve_1}) and (\ref{rational_hyperbolic_curve_2}),
%respectively. Control polygons were determined by the hyperbolic counterpart
%of Algorithm \ref{alg:cpbed_rational_trigonometric_curves}.}}%
%{\Qlb{fig:ed_rational_hpc}}{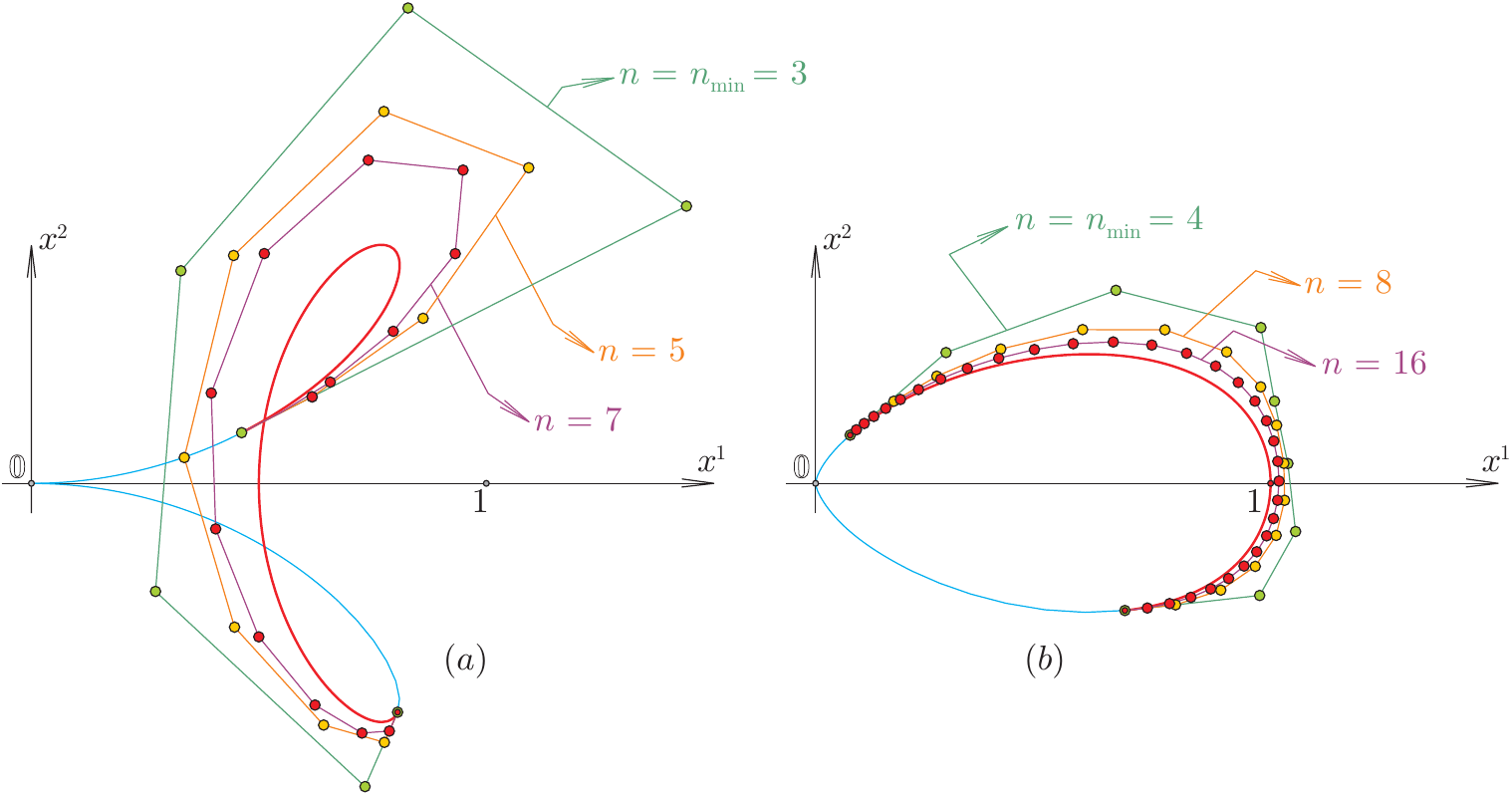}%
%{\special{ language "Scientific Word";  type "GRAPHIC";
%maintain-aspect-ratio TRUE;  display "USEDEF";  valid_file "F";
%width 6.1004in;  height 3.2162in;  depth 0pt;  original-width 6.0727in;
%original-height 3.1886in;  cropleft "0";  croptop "1";  cropright "1";
%cropbottom "0";  filename 'ed_rational_hpc.pdf';file-properties "XNPEU";}}
%}%
%BeginExpansion
\begin{figure}
[!h]
\begin{center}
\includegraphics[
height=3.2162in,
width=6.1004in
]%
{}%
\caption{Using rational hyperbolic curves of the type
(\ref{rational_hyperbolic_curve}), cases (\emph{a}) and (\emph{b}) illustrate
the control point based exact description of arcs
(\ref{rational_hyperbolic_curve_1}) and (\ref{rational_hyperbolic_curve_2}),
respectively. Control polygons were determined by the hyperbolic counterpart
of Algorithm \ref{alg:cpbed_rational_trigonometric_curves}.}%
\label{fig:ed_rational_hpc}%
\end{center}
\end{figure}
%EndExpansion

\begin{example}
[Applying the hyperbolic counterpart of Algorithm
\ref{alg:cpbed_rational_trigonometric_surfaces}]Using surfaces of the type
(\ref{rational_hyperbolic_surface}), Fig.
\ref{fig:rational_hyperbolic_butterfly} illustrates several control point
configurations for the exact description of the rational hyperbolic surface
patch%
\begin{equation}
\mathbf{s}\left(  u_{1},u_{2}\right)  =\frac{1}{s^{4}\left(  u_{1}%
,u_{2}\right)  }\left[
\begin{array}
[c]{c}%
s^{1}\left(  u_{1},u_{2}\right)  \\
s^{2}\left(  u_{1},u_{2}\right)  \\
s^{3}\left(  u_{1},u_{2}\right)
\end{array}
\right]  ,~\left(  u_{1},u_{2}\right)  \in\left[  0,6\right]  \times\left[
0,10\right]  ,\label{rational_hyperbolic_butterfly}%
\end{equation}
where%
\begin{align*}
s^{1}\left(  u_{1},u_{2}\right)   &  =6\left(  \cosh\left(  2u_{1}-2\right)
+\sinh\left(  u_{2}-5\right)  \right)  ,\\
s^{2}\left(  u_{1},u_{2}\right)   &  =\frac{1}{10}\sinh\left(  u_{1}-1\right)
\cosh\left(  2u_{2}-10\right)  ,\\
s^{3}\left(  u_{1},u_{2}\right)   &  =2\left(  \sinh\left(  2u_{1}-2\right)
+\cosh\left(  2u_{1}-2\right)  \right)  \cosh\left(  u_{2}-5\right)  ,\\
s^{4}\left(  u_{1},u_{2}\right)   &  =275+100\cosh(2u_{1}-2)+25\cosh
(4u_{1}-4).
\end{align*}

\end{example}

\begin{figure}
[!h]
\begin{center}
\includegraphics[
natheight=5.475100in,
natwidth=6.298400in,
height=5.5019in,
width=6.3261in
]%
{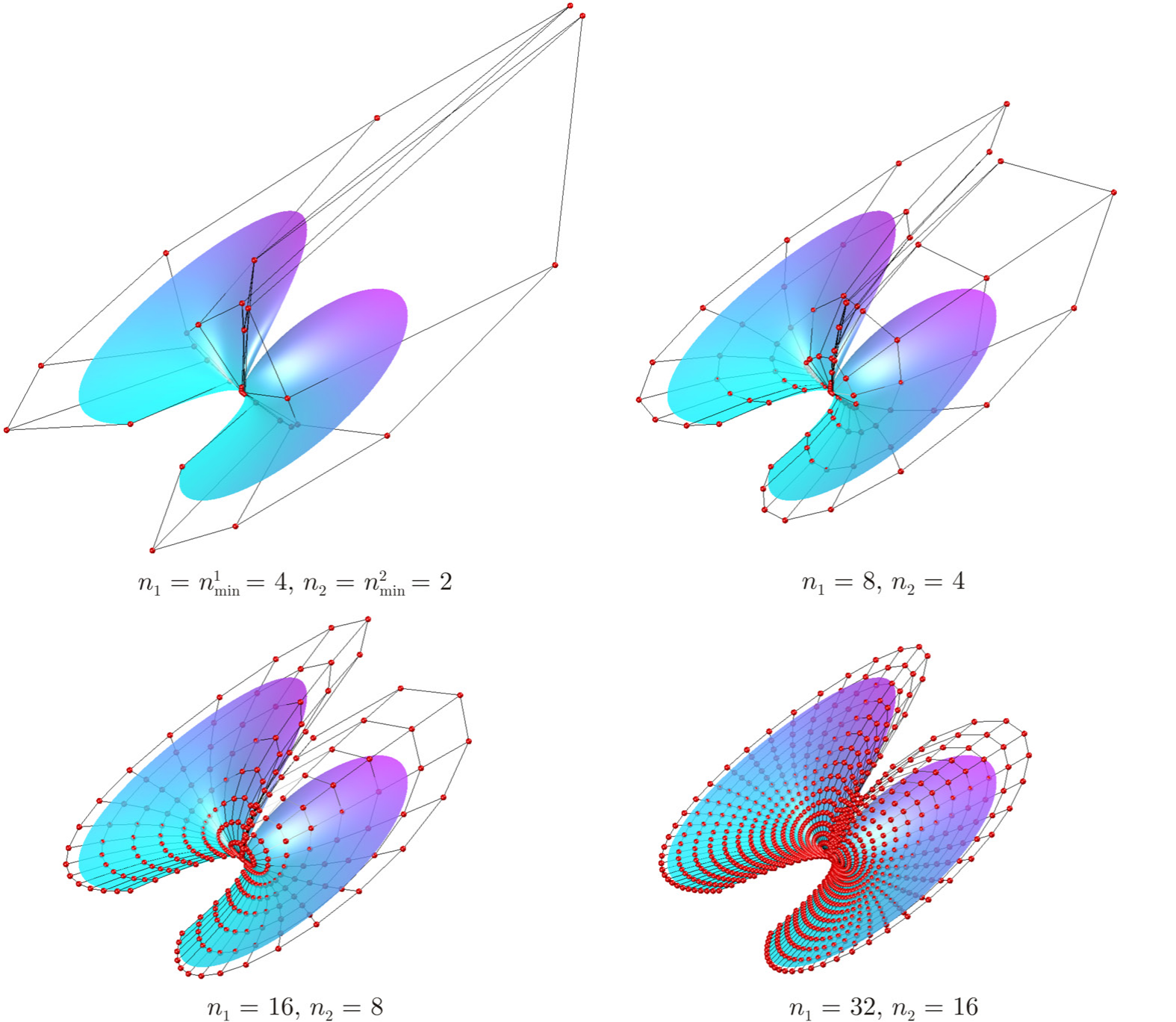}%
\caption{Control point based exact description of the patch
(\ref{rational_hyperbolic_butterfly}) by means of rational hyperbolic surfaces
of the type (\ref{rational_hyperbolic_surface}). Control nets were obtained by
the hyperbolic counterpart of Algorithm
\ref{alg:cpbed_rational_trigonometric_surfaces} ($\delta=2$, $\kappa=1$;
$\alpha_{1}=6$, $\alpha_{2}=10$; $m_1=2$, $m_2=m_3=m_4=1$).}%
\label{fig:rational_hyperbolic_butterfly}%
\end{center}
\end{figure}

\begin{example}
[Hybrid counterpart of Algorithm
\ref{alg:cpbed_rational_trigonometric_surfaces} -- hybrid rational
volumes]Using multivariate rational tensor product surfaces specified by
functions that are exclusively either trigonometric or hyperbolic in each of
their variables, Fig. \ref{fig:rational_hybrid_volume} shows the control point
based exact description of the $3$-dimensional $3$-variate rational surface
element (volume)%
\begin{equation}
\mathbf{s}\left(  \mathbf{u}\right)  =\frac{1}{s^{4}\left(  u_{1},u_{2}%
,u_{3}\right)  }\left[
\begin{array}
[c]{c}%
s^{1}\left(  u_{1},u_{2},u_{3}\right)  \\
s^{2}\left(  u_{1},u_{2},u_{3}\right)  \\
s^{3}\left(  u_{1},u_{2},u_{3}\right)
\end{array}
\right]  ,~\left(  u_{1},u_{2},u_{3}\right)  \in\left[  0,2\right]
\times\left[  0,\frac{3\pi}{4}\right]  \times\left[  0,\frac{\pi}{2}\right]
\label{rational_hybrid_volume}%
\end{equation}
where functions%
\begin{align*}
s^{1}\left(  u_{1},u_{2},u_{3}\right)  = &  \frac{5}{4}\cosh\left(
u_{1}-1\right)  \cos\left(  u_{2}\right)  \left(  \frac{3}{2}+\frac{3}{4}%
\sin\left(  u_{3}\right)  -\frac{1}{2}\cos\left(  u_{3}\right)  -\frac{1}%
{4}\sin\left(  3u_{3}\right)  \right)  ,\\
s^{2}\left(  u_{1},u_{2},u_{3}\right)  = &  \cosh\left(  u_{1}-1\right)
\sin\left(  u_{2}\right)  \left(  \frac{5}{2}+\frac{3}{4}\sin\left(
u_{3}\right)  -\frac{1}{4}\sin\left(  3u_{3}\right)  \right)  ,\\
s^{3}\left(  u_{1},u_{2},u_{3}\right)  = &  -\frac{5}{4}\sinh\left(
u_{1}-1\right)  \left(  \frac{7}{4}+\frac{1}{4}\cos\left(  2u_{3}\right)
\right)  ,\\
s^{4}\left(  u_{1},u_{2},u_{3}\right)  = &  1-\frac{3}{32}\sqrt{2-\sqrt{2}%
}\sin\left(  u_{3}\right)  -\frac{3}{32}\sqrt{2+\sqrt{2}}\cos\left(
u_{3}\right) %\\
%&
-\frac{1}{32}\sqrt{2+\sqrt{2}}\sin\left(  3u_{3}\right)  -\frac{1}{32}%
\sqrt{2-\sqrt{2}}\cos\left(  3u_{3}\right)
\end{align*}
are hyperbolic or trigonometric in the first and the last two variables, respectively.
\end{example}

\begin{figure}
[!h]
\begin{center}
\includegraphics[
natheight=2.910100in,
natwidth=6.298400in,
height=2.9378in,
width=6.3261in
]%
{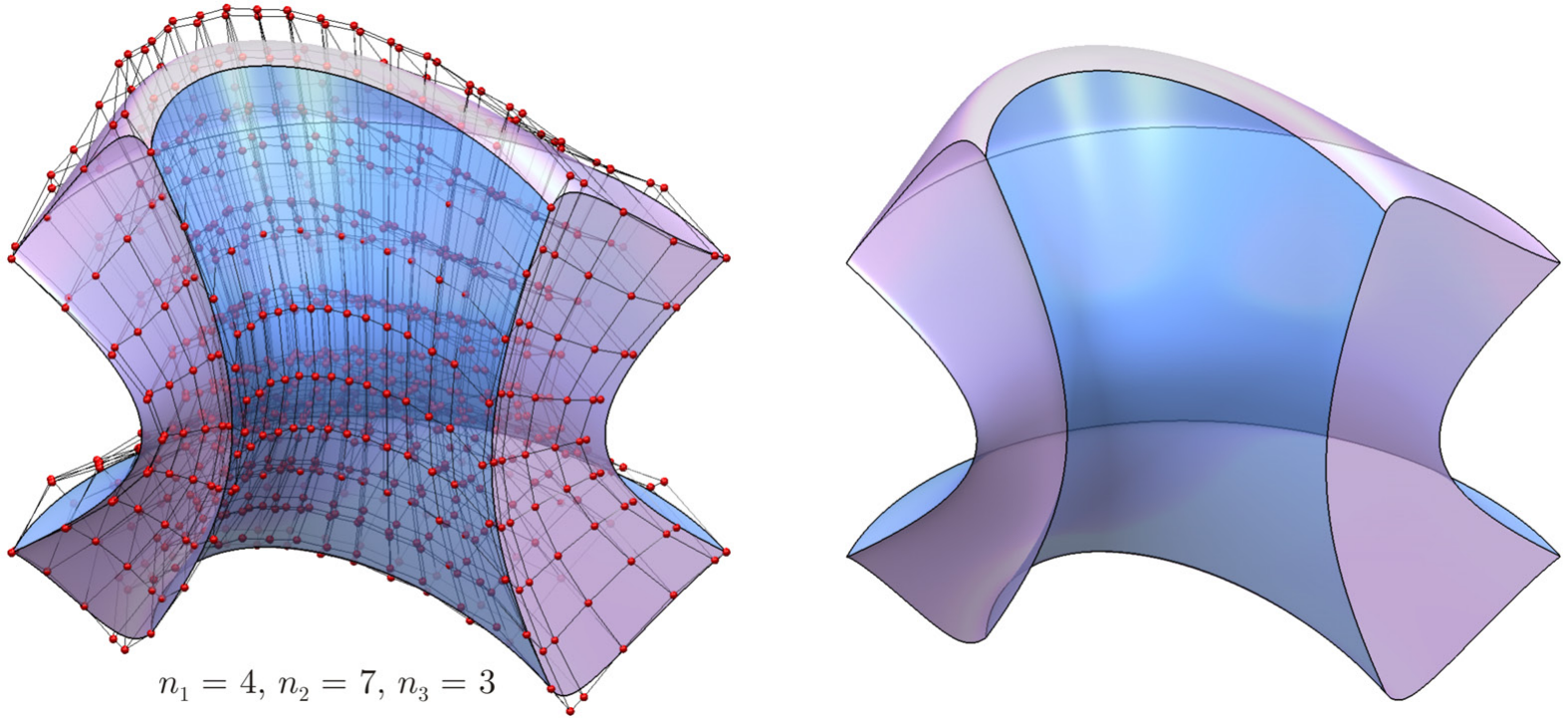}%
\caption{Control point based exact description of the hybrid $3$-dimensional
rational volume element (\ref{rational_hybrid_volume}). The control grid was
calculated by using the hybrid counterpart of Algorithm
\ref{alg:cpbed_rational_trigonometric_surfaces} ($\delta=3$, $\kappa=0$;
$\alpha_{1}=2$, $\alpha_{2}=\frac{3\pi}{4}$, $\alpha_{3}=\frac{\pi}{2}$; $m_1=m_2=m_3=m_4=1$).}%
\label{fig:rational_hybrid_volume}%
\end{center}
\end{figure}

\section{Final remarks\label{sec:final_remarks}}

Subdivision algorithms of trigonometric and hyperbolic curves detailed in
Section \ref{sec:special_parametrizations} can also be easily extended to
higher dimensional multivariate (rational) trigonometric or hyperbolic
surfaces, respectively. Therefore, similarly to standard rational B\'{e}zier
curves and surfaces that are present in the core of major CAD/CAM systems, all
subdivision based important curve and surface design algorithms (like
evaluation or intersection detection) can be both mathematically and
programmatically treated in a unified way by means of normalized B-bases
(\ref{Sanchez_basis}) and (\ref{Wang_basis}). Considering the large variety of
(rational) curves and multivariate surfaces that can be exactly described by
means of control points and the fact that classical (rational) B\'{e}zier
curves and multivariate surfaces are special limiting cases of the
corresponding curve and surface modeling tools defined in Sections
\ref{sec:special_parametrizations} and \ref{sec:multivariate_surfaces}, it is
worthwhile to incorporate all proposed techniques and algorithms presented in
Section \ref{sec:exact_description} into CAD systems of our days.

\begin{acknowledgements}
\'{A}goston R\'{o}th was supported by the European Union and the State of
Hungary, co-financed by the European Social Fund in the framework of
T\'{A}MOP-4.2.4.A/2-11/1-2012-0001 'National Excellence Program'. All assets,
infrastructure and personnel support of the research team led by the author
was contributed by the Romanian national grant CNCS-UEFISCDI/PN-II-RU-TE-2011-3-0047.
\end{acknowledgements}

\end{document}